\DeclareMathOperator{\tb}{tb}
\DeclareMathOperator{\rot}{rot}
\DeclareMathOperator{\de}{d}
\DeclareMathOperator{\e}{e}
\DeclareMathOperator{\cs}{cs}
\newcommand{\Z}{\mathbb{Z}}
\newcommand{\Q}{\mathbb{Q}}
\newcommand{\N}{\mathbb{N}}
\newcommand{\xist}{\xi_{\mathrm{st}}}
\newcommand{{\def\svgwidth{1,6ex}\,\,
\begingroup%
  \makeatletter%
  \providecommand\color[2][]{%
    \errmessage{(Inkscape) Color is used for the text in Inkscape, but the package 'color.sty' is not loaded}%
    \renewcommand\color[2][]{}%
  }%
  \providecommand\transparent[1]{%
    \errmessage{(Inkscape) Transparency is used (non-zero) for the text in Inkscape, but the package 'transparent.sty' is not loaded}%
    \renewcommand\transparent[1]{}%
  }%
  \providecommand\rotatebox[2]{#2}%
  \ifx\svgwidth\undefined%
    \setlength{\unitlength}{49.54732089bp}%
    \ifx\svgscale\undefined%
      \relax%
    \else%
      \setlength{\unitlength}{\unitlength * \real{\svgscale}}%
    \fi%
  \else%
    \setlength{\unitlength}{\svgwidth}%
  \fi%
  \global\let\svgwidth\undefined%
  \global\let\svgscale\undefined%
  \makeatother%
  \begin{picture}(1,1.00059637)%
    \put(0,0){\includegraphics[width=\unitlength,page=1]{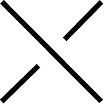}}%
  \end{picture}%
\endgroup%
\,\,}}{{\def\svgwidth{1,6ex}\,\,
\begingroup%
  \makeatletter%
  \providecommand\color[2][]{%
    \errmessage{(Inkscape) Color is used for the text in Inkscape, but the package 'color.sty' is not loaded}%
    \renewcommand\color[2][]{}%
  }%
  \providecommand\transparent[1]{%
    \errmessage{(Inkscape) Transparency is used (non-zero) for the text in Inkscape, but the package 'transparent.sty' is not loaded}%
    \renewcommand\transparent[1]{}%
  }%
  \providecommand\rotatebox[2]{#2}%
  \ifx\svgwidth\undefined%
    \setlength{\unitlength}{49.54732089bp}%
    \ifx\svgscale\undefined%
      \relax%
    \else%
      \setlength{\unitlength}{\unitlength * \real{\svgscale}}%
    \fi%
  \else%
    \setlength{\unitlength}{\svgwidth}%
  \fi%
  \global\let\svgwidth\undefined%
  \global\let\svgscale\undefined%
  \makeatother%
  \begin{picture}(1,1.00059637)%
    \put(0,0){\includegraphics[width=\unitlength,page=1]{PushOff.pdf}}%
  \end{picture}%
\endgroup%
\,\,}} 
\declaretheoremstyle[notefont=\bfseries,notebraces={}{},%
    headpunct={},postheadspace=1em]{mystyle}
\declaretheorem[style=mystyle,numbered=no,name=Theorem]{thm-hand}
\newtheoremstyle{thm}{}{}{\itshape}{}{\bfseries}{}{ }{} 
\newtheoremstyle{definition}{}{}{}{}{\bfseries}{}{ }{} 
\theoremstyle{thm}
\newtheorem{Theorem}{Theorem}[section]
\newtheorem{theorem}[Theorem]{Theorem}
\newtheorem{lemma}[Theorem]{Lemma}
\newtheorem{corollary}[Theorem]{Corollary}
\newtheorem*{theorem*}{Theorem}
\theoremstyle{definition}
\begin{document}


\title[Contact surgery numbers of E(2,3,11) and L(4m+3,4)]{Contact surgery numbers of $\Sigma(2,3,11)$ and $L(4m+3,4)$}

\author{Rima Chatterjee}
\address{Mathematisches Institut, Universit\"at zu K\"oln,
	Weyertal 86--90, 50931 K\"oln, Germany}
\email{rchatt@math.uni-koeln.de}
\author{Marc Kegel}
\address{Humboldt Universit\"at zu Berlin, Rudower Chaussee 25, 12489 Berlin, Germany}
\address{Ruhr-Universit\"at Bochum, Universit\"atsstra{\ss}e 150, 44780 Bochum, Germany}
\email{kegemarc@hu-berlin.de, kegelmarc87@gmail.com}


\date{\today} 

\keywords{Contact surgery numbers, Legendrian knots, $\de_3$-invariant, Euler class}

\begin{abstract}
We classify all contact structures with contact surgery number one on the Brieskorn sphere $\Sigma(2,3,11)$ with both orientations. We conclude that there exist infinitely many non-isotopic contact structures on each of the above manifolds which cannot be obtained by a single rational contact surgery from the standard tight contact 3-sphere. We further prove similar results for some lens spaces: We classify all contact structures with contact surgery number one on lens spaces of the form $L(4m+3,4)$. Along the way, we present an algorithm and a formula for computing the Euler class of a contact structure from a general rational contact surgery description and classify which rational surgeries along Legendrian unknots are tight and which ones are overtwisted.
\end{abstract}

\makeatletter
\@namedef{subjclassname@2020}{%
  \textup{2020} Mathematics Subject Classification}
\makeatother

\subjclass[2020]{53D35; 53D10, 57K10, 57R65, 57K10, 57K33} 

\maketitle

\section{Introduction}
A fundamental result in $3$-dimensional contact topology due to Ding--Geiges says that any connected orientable compact contact $3$-manifold with co-orientable contact structure can be obtained by contact surgery on a Legendrian link in the standard tight contact $3$-sphere $(S^3,\xist)$~\cite{Ding_Geiges_Surgery}. Moreover, one can assume all contact surgery coefficients to be $\pm1$. Thus a natural complexity notion for a given contact $3$-manifold is the minimal number of components required for the surgery link. 

The \textit{contact surgery number} $\cs(M, \xi)$ of a given contact $3$-manifold $(M,\xi)$ is defined to be the minimal number of components of a Legendrian link $L$ in $(S^3, \xist)$ needed to describe $(M,\xi)$ as a rational contact surgery along $L$ (with non-vanishing contact surgery coefficients). Similarly, we define other versions of contact surgery numbers, i.e.~$\cs_\Z(M, \xi)$, $\cs_{1/\Z}(M, \xi)$, and $\cs_{\pm1}(M, \xi)$ by requiring in addition that all contact surgery coefficients of $L$ are integers, reciprocal integers, or $\pm1$, respectively.

In~\cite{EKS_contact_surgery_numbers} the study of contact surgery numbers was initiated. In particular, explicit calculations on some simple manifolds were performed and general upper bounds on contact surgery numbers were obtained. For example it was shown that the contact surgery number of a contact manifold $(M,\xi)$ is at most $3$ larger than the topological surgery number of the underlying smooth manifold $M$. On the other hand, it remained unclear if this bound is sharp. And especially for tight contact manifolds certain properties of contact surgery numbers remained unclear.

In this paper, we will add new computations of contact surgery numbers on certain Brieskorn spheres and lens spaces. In particular, these results will add some previously unknown phenomena about the behavior of contact surgery numbers of tight contact structures.

\subsection{The Brieskorn sphere \texorpdfstring{$\Sigma(2,3,11)$}{E(2,3,11)}}
Let $\Sigma(2,3,11)$ be the Brieskorn sphere with Brieskorn invariants $(2,3,11)$, i.e.\ the Seifert fibered space presented as the surgery diagram in Figure~\ref{fig:topological_surgery_diagrams}. Up to contactomorphism there exists a unique Stein fillable (hence tight) contact structure on $\Sigma(2,3,11)$ which we denote by $\xist$~\cite[Theorem~4.4]{Ghiggini_Schoeneberger}. Since $\Sigma(2,3,11)$ is a homology sphere every overtwisted contact structure is determined by its $\de_3$-invariant, which can take all integer values. Our first result completely classifies all contact structures on this manifold with contact surgery number $1$.

\begin{theorem}\label{thm:Brieskorn}\hfill
\begin{enumerate}
    \item A contact structure on the Brieskorn sphere $\Sigma(2,3,11)$ has integer contact surgery number $\cs_\Z=1$ if and only if its $\de_3$-invariant is of the form
\begin{equation*}
m(3-m)-1 \,\,\,\textrm{ for }\,\,\, m\geq 4.    
\end{equation*}
\item A contact structure on the Brieskorn sphere $\Sigma(2,3,11)$ has rational contact surgery number $\cs=1$ if and only if its $\de_3$-invariant is of the form
\begin{align*}
m(3-m)-1 \,\,\,&\textrm{ for }\,\,\, m\geq 4,\\
-2\big(m(m+2)+2\big)\,\,\,&\textrm{ for }\,\,\, m\geq -3, \,\textrm{ or}\\
-2\big(m(m+3)+3\big)\,\,\,&\textrm{ for }\,\,\, m\leq -3.
\end{align*}
\end{enumerate}
\end{theorem}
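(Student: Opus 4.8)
The plan is to split the problem into a topological input and a contact-geometric computation, using throughout that $\Sigma(2,3,11)$ is an integral homology sphere, so that (as recalled in the introduction) every overtwisted contact structure on it is determined by its $\de_3$-invariant. The topological input is the complete list of knots $K\subset S^3$ and slopes realizing $\Sigma(2,3,11)$ by surgery: the homology-sphere condition forces the slope to be $\pm1/q$, and since the manifold is small Seifert fibered the list is finite. One extracts it by combining the classification of Seifert-fibered surgeries on torus and cable knots (Moser, Gordon) with the classification of Seifert-fibered exceptional surgeries on hyperbolic knots and with the Heegaard Floer correction-term obstruction --- the value of the $d$-invariant $d(\Sigma(2,3,11))$ constrains the $V$-invariants, hence the genus, of any knot surgering to it. The outcome should be: the left-handed trefoil with slope $-\tfrac12$, together with (at least) one knot $K_0$ carrying an integer surgery onto $\Sigma(2,3,11)$; if such a list is not available off the shelf, establishing it is itself a step of the proof. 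The contact-geometric input is the surgery formula for $\de_3$ from a rational contact surgery diagram --- the machinery advertised in the abstract --- applied to a single Legendrian knot after converting its rational contact surgery into $\pm1$-surgeries along the Legendrian chain read off from a continued-fraction expansion, together with an overtwistedness criterion for positive contact surgeries (e.g.\ contact $(+1)$-surgery on a stabilized Legendrian knot is overtwisted).

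For the ``if'' directions one exhibits the constructions family by family. The values $m(3-m)-1$, $m\ge4$, are produced from Legendrian representatives of $K_0$ with suitable $\tb$ and $\rot$ and an integer contact surgery coefficient; the $\de_3$-formula outputs $m(3-m)-1$, the quadratic coming from the term $c_1^2=\rot^2/(\text{framing})$ with $m$ a linear function of $\rot$, and the range $m\ge4$ reflecting the top of the Legendrian mountain range of $K_0$. Since the coefficient is an integer, these structures have $\cs_\Z=1$. The families $-2\big(m(m+2)+2\big)$ and $-2\big(m(m+3)+3\big)$ come instead from Legendrian left-handed trefoils with $\tb\le-6$, hence with contact surgery coefficient $r=-\tfrac12-\tb\ge\tfrac{11}{2}$; the two parabolas, with ranges $m\ge-3$ and $m\le-3$, correspond to the two stabilization/rotation branches available in the continued-fraction conversion of this positive coefficient. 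In each case the overtwistedness criterion identifies the output as overtwisted --- the only possible exception being a single representative giving the unique Stein fillable $\xist$ --- so one realizes precisely the contact structure with the stated $\de_3$-value.

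For the ``only if'' directions, suppose $\cs(\Sigma(2,3,11),\xi)=1$, realized by contact $p/q$-surgery on a Legendrian knot $K$. The homology-sphere condition forces the smooth coefficient $\tb(K)+p/q$ to equal $\pm1/q'$, so by the topological classification the underlying knot and slope lie in the finite list; for each entry the slice--Bennequin inequality leaves only finitely many Legendrian representatives, and only finitely many choices in the $\pm1$-surgery conversion give back a homology sphere. Evaluating the $\de_3$-formula over this finite set yields exactly the union of the three stated sets, and the overtwistedness criterion shows that each value is realized by an overtwisted structure (or by $\xist$), which on the homology sphere $\Sigma(2,3,11)$ is then the unique contact structure with that $\de_3$. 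Finally, imposing $p/q\in\Z$ forces $\tb(K)+p/q\in\Z$, hence smooth coefficient $\pm1$; this rules out the trefoil --- whose only relevant slope $-\tfrac12$ is not an integer --- and leaves just the knot(s) $K_0$, giving exactly $\{m(3-m)-1:m\ge4\}$ and hence part (1).

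I expect the principal obstacle to be the topological classification of surgery descriptions of $\Sigma(2,3,11)$: pinning down exactly which knots and slopes occur requires the full strength of the correction-term (or knot Floer) obstruction together with the exceptional-surgery literature to exclude stray hyperbolic, reducible, or Seifert-fibered surgeries. The secondary difficulty is the bookkeeping for the non-integer trefoil coefficients: because the contact surgery is then non-unique, one must enumerate the contact structures coming from every Legendrian representative and every continued-fraction branch, verify overtwistedness uniformly (the tight/overtwisted analysis of surgeries along Legendrian unknots developed in the paper is what makes this tractable), check that the resulting $\de_3$-values assemble into exactly the two parabolas with the asserted ranges, and confirm that no tight structure other than possibly $\xist$, and no overtwisted value outside the three families, is ever produced.
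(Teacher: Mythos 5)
Your overall architecture --- reduce to a topological classification of one-knot surgery descriptions of $\Sigma(2,3,11)$, then enumerate Legendrian representatives and compute $\de_3$ after the continued-fraction conversion to $(\pm1)$-surgeries --- is exactly the paper's. The contact-geometric half of your plan matches what the paper does: the left-handed trefoil with $\tb=t\le-6$ and contact coefficient $\tfrac{-1-2t}{2}$ contributes the two parabolas via the two stabilization branches (rotation vectors $(r,r\pm1,r\pm2)$ versus $(r,r\pm1,r)$ in the expanded diagram), the integer case is isolated by noting that an integer contact coefficient forces an integer smooth coefficient (ruling out the trefoil), and overtwistedness versus tightness is controlled by the stabilization criterion together with the uniqueness of $\xist$ and the fact that overtwisted structures on a homology sphere are determined by $\de_3$.

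The genuine gap is the topological input, which you correctly flag as the principal obstacle but then leave unresolved. The paper's proof stands entirely on the recent theorem of Baldwin--Sivek~\cite{baldwin2022characterizing}: the only pairs $(K,r)$ with $S^3_r(K)\cong\Sigma(2,3,11)$ are the positive twist knot $K5a1$ with $r=-1$ and the left-handed trefoil with $r=-1/2$. Your proposed toolbox --- Moser/Gordon for torus and cable knots, the exceptional-surgery literature, and correction-term constraints on the $V$-invariants --- does not suffice to establish this: excluding \emph{all} other knots admitting a $\pm1/q$-surgery to this manifold is precisely the hard content of the Baldwin--Sivek characterization (proved with instanton Floer techniques), and your assertion that the list of such knots is finite ``since the manifold is small Seifert fibered'' is not a known general principle. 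Concretely, without identifying your $K_0$ as $K5a1$ --- which is Legendrian simple with maximal $\tb=-8$ and $\rot=\pm1$, the facts that feed into the paper's appeal to~\cite{EKS_contact_surgery_numbers} for the family $m(3-m)-1$, $m\ge4$ --- you can derive neither part~(1) nor the completeness of the list in part~(2).
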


In the proof of Theorem~\ref{thm:Brieskorn} we will also describe all possible contact surgery diagrams of these contact structures along a single Legendrian knot. From that, we will deduce the following corollary.

\begin{corollary}\label{cor:Brieskorn1}
Let $\xi$ be any overtwisted contact structure on $\Sigma(2,3,11)$, then 
\begin{align*}
    2\leq\cs_{1/\Z}(\xi)\leq\cs_{\pm1}(\xi)\leq3.
\end{align*}
\end{corollary}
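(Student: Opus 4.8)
The plan is to extract both inequalities from the classification of single-Legendrian-knot contact surgery descriptions established in the proof of Theorem~\ref{thm:Brieskorn}, together with one construction for the upper bound. The middle inequality needs no work: $\pm1=1/(\pm1)$ is a reciprocal integer, so every Legendrian link all of whose contact surgery coefficients lie in $\{\pm1\}$ is a fortiori a Legendrian link all of whose contact surgery coefficients are reciprocal integers, whence $\cs_{1/\Z}(\xi)\le\cs_{\pm1}(\xi)$.

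For the lower bound $\cs_{1/\Z}(\xi)\ge2$ I would argue by contradiction. Suppose $(\Sigma(2,3,11),\xi)$ were obtained by a single contact $1/n$-surgery along a Legendrian knot $K\subset(S^3,\xist)$ with $n\in\Z\setminus\{0\}$. If $n<0$, then $1/n<0$, and a single negative contact surgery along a Legendrian knot in $(S^3,\xist)$ bounds a Stein domain; hence the result is tight, contradicting overtwistedness of $\xi$. If $n>0$, the underlying smooth surgery is $S^3_{p/n}(K)$ with $p=n\cdot\tb(K)+1$, and since $\Sigma(2,3,11)$ is an integral homology sphere we must have $|p|=1$; this forces $\tb(K)=0$ or $(n,\tb(K))\in\{(1,-2),(2,-1)\}$. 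In each of these finitely many configurations the knot type of $K$ is pinned down by the requirement that the smooth surgery give $\Sigma(2,3,11)$, and the resulting contact structure then appears in the list compiled in the proof of Theorem~\ref{thm:Brieskorn}(2), where it is identified with the unique tight contact structure and not with an overtwisted one. Hence no overtwisted contact structure on $\Sigma(2,3,11)$ is obtained by a single reciprocal-integer contact surgery, i.e.\ $\cs_{1/\Z}(\xi)\ge2$.

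For the upper bound $\cs_{\pm1}(\xi)\le3$ I would first observe that $\Sigma(2,3,11)$ is $(\pm1)$-surgery on a knot in $S^3$: this is automatic from Theorem~\ref{thm:Brieskorn}(1), since the existence of a contact structure with $\cs_\Z=1$ forces $\Sigma(2,3,11)$ to be integral knot surgery, and an integral homology sphere obtained by integral knot surgery arises by $(\pm1)$-surgery. Realizing such a knot by a suitable Legendrian representative yields a one-component $\pm1$-contact surgery diagram for $\Sigma(2,3,11)$, carrying some contact structure $\xi_0$. To this diagram I would then attach the standard two-component gadget consisting of a Legendrian unknot with contact $(+1)$-surgery together with a Legendrian push-off of it, stabilized $k$ times and carrying a contact $(-1)$-surgery: the gadget cancels smoothly, so the underlying manifold remains $\Sigma(2,3,11)$, it renders the contact structure overtwisted, and by varying $k$ and the signs of the $k$ stabilizations one realizes, via the $\de_3$-formula established earlier in the paper, the overtwisted contact structure with any prescribed value of the $\de_3$-invariant. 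As $\Sigma(2,3,11)$ is a homology sphere, $\xi$ is determined by this value, so every overtwisted $\xi$ is described by this three-component $\pm1$-surgery diagram, giving $\cs_{\pm1}(\xi)\le3$.

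The step I expect to be the main obstacle is verifying that this three-component family indeed sweeps out all integer values of the $\de_3$-invariant with no parity or congruence gap — equivalently, that the two-component gadget, applied to the fixed contact structure $\xi_0$, realizes every overtwisted contact structure on $\Sigma(2,3,11)$ and not merely those in an arithmetic progression. This reduces to an explicit but delicate computation with the rotation-number contributions in the $\de_3$-formula from the earlier sections, and it is where the algorithmic input of the previous sections does the real work. A secondary technical point is to confirm that the chosen surgery knot for $\Sigma(2,3,11)$ admits a Legendrian representative of the Thurston--Bennequin number needed to realize the $(\pm1)$-surgery contactly; if that fails, one argues instead from a more economical surgery presentation of $\Sigma(2,3,11)$.
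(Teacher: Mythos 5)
Your middle inequality and your lower bound are fine and follow essentially the paper's route: the paper likewise derives $\cs_{1/\Z}(\xi)\geq 2$ from the enumeration of one-component surgery descriptions in the proof of Theorem~\ref{thm:Brieskorn} (by Baldwin--Sivek the only candidates are Legendrian realizations of $K5a1$ with topological coefficient $-1$, giving contact coefficient $-1-\tb\geq 7$, and of the left-handed trefoil with topological coefficient $-1/2$, giving contact coefficient $-\tb-1/2\geq 11/2$; neither is ever a reciprocal integer). One inaccuracy in your write-up: your three residual configurations $\tb(K)=0$, $(n,\tb)=(1,-2)$, $(n,\tb)=(2,-1)$ are all \emph{empty} --- they would force a positive surgery coefficient or a Thurston--Bennequin invariant above the maximal one ($-8$ for $K5a1$, $-6$ for the left-handed trefoil) --- rather than producing diagrams ``identified with the unique tight contact structure''; indeed $\xist$ has $\cs=2$ and does not occur in the list of Theorem~\ref{thm:Brieskorn}. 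This does not affect the conclusion of the lower bound.

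The upper bound is where your argument breaks. Your construction starts from ``a one-component $\pm1$-contact surgery diagram for $\Sigma(2,3,11)$ carrying some contact structure $\xi_0$'', and no such diagram exists: a contact $(\pm1)$-surgery is in particular a reciprocal-integer surgery, so its existence would contradict the very lower bound $\cs_{1/\Z}\geq 2$ you just established, which holds for \emph{all} contact structures on $\Sigma(2,3,11)$, not only overtwisted ones (concretely, contact $(\mp1)$-surgery with topological coefficient $-1$ would need a Legendrian $K5a1$ with $\tb=0$ or $\tb=-2$, far above the maximal value $-8$). Your own ``secondary technical point'' flags exactly this failure, but the proposed fallback (``a more economical surgery presentation'') is not supplied, and the natural patch --- starting from a two-component $(\pm1)$-diagram and appending your two-component unknot-plus-stabilized-push-off gadget --- would only yield $\cs_{\pm1}\leq 4$. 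The paper does not attempt this construction: it obtains $\cs_{\pm1}(\xi)\leq 3$ for overtwisted $\xi$ by citing Proposition~6.8 of~\cite{EKS_contact_surgery_numbers}, which applies because $\Sigma(2,3,11)$ is integer surgery along a knot. As written, your proof establishes $2\leq\cs_{1/\Z}(\xi)\leq\cs_{\pm1}(\xi)$ but not the bound $\cs_{\pm1}(\xi)\leq 3$.
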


We also have similar results for the tight contact structure.

\begin{corollary}\label{cor:Brieskorn_tight}
Let $\xist$ be the standard tight contact structure on $\Sigma(2,3,11)$, then 
\begin{align*}
\cs(\xist)=2, \,\,\,2\leq \cs_{1/\Z}(\xist)\leq 3, \,\,\,\textrm{ and }\,\,\,2\leq \cs_{\Z}(\xist)\leq\cs_{\pm1}(\xist)\leq 4.
\end{align*}
A rational contact surgery diagram of $(\Sigma(2,3,11),\xist)$ along a $2$-component Legendrian link is shown in Figure~\ref{fig:contact_Brieskorn}.
\end{corollary}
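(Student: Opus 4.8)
The plan is to deduce all of the lower bounds from the single estimate $\cs(\xist)\geq 2$ together with the formal inequalities $\cs\leq\cs_{1/\Z}\leq\cs_{\pm1}$ and $\cs\leq\cs_{\Z}\leq\cs_{\pm1}$ (a $(\pm1)$-surgery is in particular a $(1/\Z)$- and a $\Z$-surgery, and each of these is a rational surgery); these already give $2\leq\cs_{1/\Z}(\xist)$, $2\leq\cs_{\Z}(\xist)$ and $\cs_{\Z}(\xist)\leq\cs_{\pm1}(\xist)$. For $\cs(\xist)\geq 2$ I argue by contradiction: if $\cs(\xist)=1$, then by Theorem~\ref{thm:Brieskorn}(2) the $\de_3$-invariant of $\xist$ would lie in one of the three listed families, all of whose members are $\leq -2$. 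But $\xist$ is Stein fillable, so $\de_3(\xist)$ is computed from any Stein (or almost-complex) filling $X$ by $\de_3(\xist)=\tfrac14\big(c_1(X)^2-2\chi(X)-3\sigma(X)\big)$ up to the normalisation constant; taking $X$ the Milnor fibre of $\{x^2+y^3+z^{11}=0\}$, which is spin with $\chi(X)=21$ and $\sigma(X)=-16$, hence with $c_1(X)=0$, one gets $\de_3(\xist)=\tfrac32$ in the convention $\de_3(S^3,\xist)=-\tfrac12$ --- equivalently a small non-negative integer in the normalisation of Theorem~\ref{thm:Brieskorn} --- which in either case is not of any of the three forms. (Alternatively one simply quotes the value of $\de_3(\xist)$ established while proving Theorem~\ref{thm:Brieskorn}.) This proves $\cs(\xist)\geq 2$.

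For the upper bound $\cs(\xist)\leq 2$ I exhibit the two-component Legendrian link of Figure~\ref{fig:contact_Brieskorn}: starting from a suitable two-component smooth surgery presentation of $\Sigma(2,3,11)$ (obtained from the Seifert diagram of Figure~\ref{fig:topological_surgery_diagrams} by slam-dunks and Rolfsen twists, or from the description of $\Sigma(2,3,11)$ as a $1/2$-surgery along a trefoil), I choose Legendrian representatives whose Thurston--Bennequin invariants and rotation numbers make the contact surgery coefficients realise the required topological framings. It then suffices to verify that the underlying smooth manifold is $\Sigma(2,3,11)$ --- a routine Kirby-calculus computation --- and that the resulting contact structure is tight, for then the uniqueness of the tight contact structure on $\Sigma(2,3,11)$ \cite[Theorem~4.4]{Ghiggini_Schoeneberger} identifies it with $\xist$, giving $\cs(\xist)=2$. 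If the two-component link can be arranged with all contact surgery coefficients negative, the surgered manifold is Stein fillable and tightness is automatic; if not (the natural presentations seem to force a positive coefficient on one component), I would instead transform the diagram --- by Legendrian handle slides and Ding--Geiges cancellations of Hopf-linked $(+1)/(-1)$-pairs --- into a manifestly Stein diagram, e.g.\ the Legendrian-surgery diagram of the negative-definite plumbing of $\Sigma(2,3,11)$, which again proves tightness; as a sanity check one recomputes $\de_3$ (and the Euler class, which is automatically zero since $\Sigma(2,3,11)$ is a homology sphere) from the two-component diagram using the formulas of this paper.

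Finally, $\cs_{1/\Z}(\xist)\leq 3$ and $\cs_{\pm1}(\xist)\leq 4$ follow from the general upper bounds on contact surgery numbers in \cite{EKS_contact_surgery_numbers} applied to $\Sigma(2,3,11)$, whose topological and reciprocal-integer surgery numbers are both $1$ since it is a $1/2$-surgery along a knot; alternatively one unfolds the rational contact surgery coefficients of the two-component diagram into $(\pm1)$- (respectively reciprocal-integer) coefficients by the Ding--Geiges continued-fraction algorithm and counts components, tracking the stabilisations. I expect the single genuinely delicate point to be the tightness verification in the second paragraph --- being sure that the explicit two-component diagram represents the tight contact structure rather than some overtwisted structure with the same $\de_3$-invariant; everything else is routine Kirby calculus, the filling-formula computation of $\de_3(\xist)$, and continued-fraction bookkeeping.
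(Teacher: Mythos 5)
Your proposal is correct and follows essentially the same route as the paper: the lower bounds come from computing $\de_3(\xist)=2$ and observing that this value never occurs in the list of Theorem~\ref{thm:Brieskorn}, the upper bound $\cs(\xist)\leq 2$ comes from an explicit two-component diagram with all negative contact surgery coefficients (hence Stein fillable, hence tight, hence $\xist$ by uniqueness), $\cs_{1/\Z}(\xist)\leq 3$ follows by unfolding that diagram with the transformation lemma, and $\cs_{\pm1}(\xist)\leq 4$ is the general bound from~\cite{EKS_contact_surgery_numbers}. The diagram of Figure~\ref{fig:contact_Brieskorn} indeed has both coefficients negative ($-1/4$ and $-2/9$, obtained from the all-negative three-component Seifert realization by a handle slide and the replacement lemma), so the tightness verification you flag as the delicate point is automatic; your Milnor-fibre computation of $\de_3(\xist)=2$ correctly supplies a detail the paper leaves implicit.
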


We remark that this is the first tight contact structure for which we can explicitly compute the rational contact surgery number and where it does not agree with the topological surgery number of the underlying smooth manifold.

\subsection{The mirror of the Brieskorn sphere \texorpdfstring{$\Sigma(2,3,11)$}{E(2,3,11)}}
Again up to contactomorphism there exists a unique Stein fillable (hence tight) contact structure on $-\Sigma(2,3,11)$ which we denote by $\xist$~~\cite[Theorem~4.9]{Ghiggini_Schoeneberger}, cf.~\cite{Ghiggini_vanHornMorris}. Again we can completely classify the contact structures with contact surgery number $1$.

\begin{theorem}\label{thm:-Brieskorn}\hfill
\begin{enumerate}
    \item A contact structure on $-\Sigma(2,3,11)$ has integer contact surgery number $\cs_\Z=1$ if and only if its $\de_3$-invariant is of the form
\begin{equation*}
m(m-1) \,\,\,\textrm{ for }\,\,\, m\geq 0.    
\end{equation*}
\item An overtwisted contact structure on $-\Sigma(2,3,11)$ has rational contact surgery number $\cs=1$ if and only if its $\de_3$-invariant is of the form
\begin{align*}
m(m-1) \,\,\,&\textrm{ for }\,\,\, m\geq 0,\\
2m(m+1)\,\,\,&\textrm{ for }\,\,\, m\leq -1,\textrm{ or }\\
2(m+1)^2\,\,\,&\textrm{ for }\,\,\, m\leq -1.
\end{align*}
\end{enumerate}
\end{theorem}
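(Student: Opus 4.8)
The plan is to mimic the strategy that (presumably) proved Theorem~\ref{thm:Brieskorn} for $\Sigma(2,3,11)$, now applied to its mirror $-\Sigma(2,3,11)$. First I would fix a topological surgery diagram for $-\Sigma(2,3,11)$ (the mirror of the diagram in Figure~\ref{fig:topological_surgery_diagrams}), and then enumerate \emph{all} ways this $\Z$HS$^3$ can arise as a single rational surgery on a knot $K\subset S^3$. Since $H_1=0$, a surgery coefficient $p/q$ on $K$ gives an integral homology sphere precisely when $p=\pm1$, so the topological input is a classification of the knots $K$ and coefficients $\pm1/q$ realizing $\pm\Sigma(2,3,11)$. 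This is where I expect to lean on known results: $-\Sigma(2,3,11)$ is $+1$-surgery on (a mirror/variant of) the $(2,3)$ or $(2,5)$ torus knot or a twist knot, and more generally one must invoke the relevant surgery-characterization / Seifert-fibered recognition results to pin down the finite list of $(K,1/n)$ pairs; the corresponding statement for $\Sigma(2,3,11)$ must already be isolated in the proof of Theorem~\ref{thm:Brieskorn}, and the mirror case follows by reversing orientation.

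Next, for each topological model $(K,1/n)$ I would run through all Legendrian representatives of $K$: a contact $r$-surgery with $r=1/n$ and $n\in\Z\setminus\{0\}$ is performed on a Legendrian knot $\mathcal L$ with some Thurston--Bennequin number $\tb(\mathcal L)=t$, and the smooth coefficient is then $\tb(\mathcal L)+\frac1n = t+\frac1n$; matching this to the required $\pm1/n'$ constrains $t$ in terms of $n$. For the \emph{integral} case ($\cs_\Z=1$, part (1)) one needs contact $\pm1$-surgery, i.e.\ the Legendrian surgery ($-1$) or the single positive stabilization-type ($+1$) surgery, so the classification of Legendrian representatives of the finitely many relevant knots (torus knots / twist knots, whose Legendrian classifications are known) plus the permitted stabilizations gives a finite-to-one bookkeeping problem. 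The $\de_3$-invariant of the resulting contact structure is then computed from the surgery diagram via the standard formula (the $\de_3$-formula for rational contact surgery, presumably the algorithm advertised in the abstract and developed earlier), which for a single knot with rotation number $r$ and the relevant coefficient reduces to an explicit quadratic expression in the integer parameter; collecting these over all cases should produce exactly the families $m(m-1)$ for $m\ge 0$ in part (1), and the two extra families $2m(m+1)$ and $2(m+1)^2$ for $m\le-1$ in part (2) coming from the genuinely rational (non-reciprocal-integer) coefficients.

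The one subtlety beyond bookkeeping is the ``if and only if'': the forward direction (these $\de_3$-values \emph{are} realized) is the explicit construction above, but the converse (no \emph{other} contact structure has contact surgery number $1$) requires that the topological classification of $(K,1/n)$ realizing $-\Sigma(2,3,11)$ be genuinely complete, and that for each such $K$ one has the full Legendrian classification (not just existence of representatives with prescribed $\tb,\rot$). Since $-\Sigma(2,3,11)$ is an L-space or at least has well-understood Heegaard Floer $d$-invariants, one can also cross-check which $\de_3$-values are even possible using the sign of the surgery coefficient and Ozsv\'ath--Szab\'o-type obstructions, but the main engine is the surgery-characterization result. Finally, because every overtwisted contact structure on $-\Sigma(2,3,11)$ is determined by its $\de_3\in\Z$, the list of realizable $\de_3$-values \emph{is} the list of contact structures, so once the construction and the converse are in place Theorem~\ref{thm:-Brieskorn} follows; the tight structure $\xist$ must be handled separately (it is not in the $\cs=1$ list, as its $\de_3$ lies outside these families, which is why part (2) is phrased for overtwisted $\xi$), exactly as in Corollary~\ref{cor:Brieskorn_tight}. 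The main obstacle, then, is assembling the complete topological surgery classification for $-\Sigma(2,3,11)$ and the accompanying Legendrian data; once that finite list is in hand, the $\de_3$ computations are routine applications of the earlier formula.
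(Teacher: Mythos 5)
Your strategy is essentially the paper's: the topological input is obtained by mirroring the Baldwin--Sivek characterization already used for $\Sigma(2,3,11)$ (so the only one-knot descriptions of $-\Sigma(2,3,11)$ are topological $(+1)$-surgery on $-K5a1$ and $(1/2)$-surgery on the right-handed trefoil $K3a1$), one then runs through the known Legendrian classifications of these two knots and computes $\de_3$ via the transformation lemma and the $\de_3$-formula of Lemma~\ref{lem:d3}, with the converse direction supplied by the completeness of both classifications. So the architecture is right.

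One sentence of your outline is wrong and would sink part (1) if taken literally: you assert that the case $\cs_\Z=1$ ``needs contact $\pm1$-surgery.'' It does not --- $\cs_\Z=1$ allows any nonzero \emph{integer} contact coefficient. Since the contact coefficient is forced to equal the topological coefficient minus $\tb$, a Legendrian realization $K$ of $-K5a1$ with $\tb(K)=t\le 0$ carries contact coefficient $1-t\in\{1,2,3,\ldots\}$, and it is precisely the stabilized representatives (large $n=1-t$) that produce the whole family $\de_3=m(m-1)$, $m\ge 0$; restricting to contact $(\pm1)$-surgery would leave only the single structure with $\de_3=0$ (this is the content of Corollary~\ref{cor:-Brieskorn1}, where $\cs_{\pm1}$ and $\cs_{1/\Z}$ genuinely differ from $\cs_\Z$). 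Relatedly, the trefoil never contributes to part (1) because its topological coefficient $1/2$ can never be $\tb$ plus an integer contact coefficient of the form needed; it only enters part (2), where the cases $t=1$ (contact $-1/2$, tight) and $t\le 0$ must be separated. Finally, a small misstatement at the end: the tight structure $\xist$ \emph{does} have $\cs=1$ (via contact $(-1/2)$-surgery on the $\tb=1$ trefoil); it is absent from part (2) only because that part is stated for overtwisted structures, not because its $\de_3=-1$ obstructs a one-component description.
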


From Theorem~\ref{thm:-Brieskorn} we will deduce that some contact structures have unique contact surgery diagrams along Legendrian knots.

\begin{corollary}\label{cor:-Brieskorn1}
If $K$ is a Legendrian knot in $(S^3,\xist)$ such that contact $(+1)$-surgery or contact $(-1)$-surgery along $K$ yields a contact structure $\xi$ on $-\Sigma(2,3,11)$. Then $K$ is isotopic to the unique Legendrian realization of the twist knot $-K5a1$ with $\tb=0$ and $\rot=0$, the contact surgery coefficient is $+1$, and $\xi$ is isotopic to the overtwisted contact structure with $\de_3=0$. This contact surgery diagram is shown in Figure~\ref{fig:contact_-Brieskorn}.
\end{corollary}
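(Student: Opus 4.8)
The plan is to combine the classification from Theorem~\ref{thm:-Brieskorn} with the known Legendrian knot theory of small twist knots. First I would record that a single contact $(\pm1)$-surgery along a Legendrian knot $K$ with $\tb(K)=t$ realizes topological surgery coefficient $t\mp1$, so the underlying smooth manifold is $S^3_{t-1}(K)$ for contact $(+1)$-surgery and $S^3_{t+1}(K)$ for contact $(-1)$-surgery. Since $-\Sigma(2,3,11)$ is obtained by $+1$-surgery on the left-handed trefoil (equivalently, a Seifert-fibered description as in Figure~\ref{fig:topological_surgery_diagrams}), I would invoke the relevant surgery-characterization results (Ghiggini--Sch\"onenberger, and the work on which knots admit a surgery to $\pm\Sigma(2,3,11)$) to pin down that the only knot in $S^3$ with an integer surgery yielding $-\Sigma(2,3,11)$ in the allowed slope range is the twist knot $-K5a1$, with surgery coefficient forcing $\tb(K)=0$ and the surgery coefficient being exactly $+1$.

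Next I would use the Legendrian classification of the twist knot $-K5a1$: its Legendrian representatives are classified (by Etnyre--Ng--V\'ertesi–type results for twist knots), and in the relevant Thurston--Bennequin range there is a \emph{unique} Legendrian representative with $\tb=0$, and it has $\rot=0$. This uses that the maximal Thurston--Bennequin invariant of $-K5a1$ is $0$ (so $\tb=0$ is the maximal value, where the classification is cleanest) and that the rotation number is then forced to vanish by the slice--Bennequin bound together with the classification data. Having fixed $K$ up to Legendrian isotopy and the coefficient to $+1$, the resulting contact structure $\xi$ is determined; computing its $\de_3$-invariant via the formula recalled earlier in the paper (or via the surgery formula for $\de_3$ in terms of $\tb$, $\rot$, and the linking form) gives $\de_3(\xi)=0$, which is the $m=0$ (equivalently $m=1$) case $m(m-1)$ in Theorem~\ref{thm:-Brieskorn}(1), and one checks this is the \emph{only} value in the lists of Theorem~\ref{thm:-Brieskorn} realized by a genuine contact $(+1)$- or $(-1)$-surgery (as opposed to a general rational surgery).

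I expect the main obstacle to be the smooth side: showing that $-K5a1$ is the \emph{only} knot in $S^3$ admitting a $\pm1$-framed integer surgery to $-\Sigma(2,3,11)$, and identifying the precise coefficient. This should follow from the genus bounds and the classification of surgeries on knots yielding small Seifert-fibered spaces / Brieskorn spheres, but it requires carefully citing and assembling the correct statements (e.g. that $-\Sigma(2,3,11)$ is $+1$-surgery on the figure-eight or on a twist knot, uniqueness up to mirroring, and the Thurston--Bennequin bound $\overline{\tb}(-K5a1)=0$). Once that input is in place, the Legendrian classification of twist knots and the $\de_3$ computation are routine, and the claim that the coefficient is $+1$ rather than $-1$ follows because contact $(-1)$-surgery on the $\tb=0$ representative would give smooth coefficient $+1$ on $-K5a1$, which does \emph{not} produce $-\Sigma(2,3,11)$, whereas contact $(+1)$-surgery gives smooth coefficient $-1$, which does.
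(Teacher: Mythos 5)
Your overall route is the same as the paper's: invoke the smooth surgery characterization of $\pm\Sigma(2,3,11)$ (Baldwin--Sivek, quoted in the proof of Theorem~\ref{thm:-Brieskorn}) to reduce to Legendrian realizations of $-K5a1$ with topological surgery coefficient $+1$ (the trefoil option drops out because its topological coefficient $+1/2$ is not an integer, while a contact $(\pm1)$-surgery always has integer topological coefficient), then pin down the Legendrian representative via the Etnyre--Ng--V\'ertesi classification and compute $\de_3$. The paper's proof is exactly this, packaged as ``read off from the enumeration in the proof of Theorem~\ref{thm:-Brieskorn} which diagrams have reciprocal integer coefficients.''

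However, your execution contains a sign error that breaks the key exclusion step. Contact $r$-surgery along $K$ is topological $\bigl(r+\tb(K)\bigr)$-surgery, so contact $(+1)$-surgery on a $\tb=t$ knot is smooth $(t+1)$-surgery and contact $(-1)$-surgery is smooth $(t-1)$-surgery --- the opposite of what you wrote. Compounding this, you assert that smooth $(+1)$-surgery on $-K5a1$ does \emph{not} produce $-\Sigma(2,3,11)$ while smooth $(-1)$-surgery does; it is exactly the other way around: topological $(-1)$-surgery on $K5a1$ gives $\Sigma(2,3,11)$, hence $(+1)$-surgery on the mirror gives $-\Sigma(2,3,11)$. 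Your two errors cancel to produce the correct final answer ($\tb=0$, contact coefficient $+1$), but the argument as written does not actually rule out contact $(-1)$-surgery: with the correct conversion, contact $(-1)$-surgery would require $\tb(-K5a1)=2$, and the genuine reason this is impossible is that it exceeds the maximal Thurston--Bennequin invariant of $-K5a1$ (which is $1$, not $0$ as you claim). Relatedly, your claim that the rotation number is ``forced to vanish'' at the relevant $\tb$ cannot be justified as you indicate, since $\tb+\rot$ is always odd for a Legendrian knot in $S^3$; the correct move is simply to quote the classification of Legendrian representatives of $-K5a1$ at the required Thurston--Bennequin invariant, which is what the proof of Theorem~\ref{thm:-Brieskorn} does. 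Finally, $-\Sigma(2,3,11)$ is obtained by $(+1/2)$-surgery on the right-handed trefoil, not by $(+1)$-surgery on the left-handed one.
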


If we allow reciprocal integer contact surgery coefficients, then this contact structure has exactly one more surgery diagram: contact $(1/2)$-surgery along the Legendrian realization of the right-handed trefoil with $\tb=0$ and $\rot=1$, shown in Figure~\ref{fig:contact_-Brieskorn}.

For the tight contact structure, we can compute all contact surgery numbers.

\begin{corollary}\label{cor:-Brieskorn_tight}
If $K$ is a Legendrian knot in $(S^3,\xist)$ such that for some $r\in\Q\setminus\{0\}$, contact $r$-surgery $K(r)$ is contactomorphic to $(-\Sigma(2,3,11),\xist)$, then $K$ is isotopic to the right-handed trefoil with $\tb=1$ and $\rot=0$ and $r=-1/2$.

In particular, we compute its contact surgery numbers as
\begin{align*}
\cs(\xist)=\cs_{1/\Z}(\xist)=1, \,\,\,\textrm{ and }\,\,\,\cs_{\pm1}(\xist)=\cs_{\Z}(\xist)=2.
\end{align*}
\end{corollary}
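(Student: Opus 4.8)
The plan is to deduce the corollary from Theorem~\ref{thm:-Brieskorn} and its proof, in which one classifies \emph{all} single-component rational contact surgery diagrams yielding a contact structure on $-\Sigma(2,3,11)$ and records, for each, the resulting $\de_3$-invariant and whether the resulting contact structure is tight or overtwisted. Since $-\Sigma(2,3,11)$ carries a \emph{unique} tight — indeed Stein fillable — contact structure $\xist$ by \cite[Theorem~4.9]{Ghiggini_Schoeneberger}, proving the first assertion of the corollary amounts to picking out the tight entries of that list, and the surgery numbers then follow formally.

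First I would pin down the tight diagram. Contact $(-1/2)$-surgery along the Legendrian right-handed trefoil $K$ with $\tb=1$, $\rot=0$ gives $\xist$: since $-1/2<0$, the Ding--Geiges algorithm \cite{Ding_Geiges_Surgery} rewrites this as contact $(-1)$-surgery along a two-component Legendrian link (namely $K$ together with one Legendrian push-off of $K$), which is a composition of Legendrian surgeries on the Stein fillable $(S^3,\xist)$ and hence produces a Stein fillable — so tight, so unique — contact structure. More is true: \emph{every} negative-coefficient single-component contact surgery on $(S^3,\xist)$ is Stein fillable, so any such surgery producing $-\Sigma(2,3,11)$ must yield $\xist$; consequently the classification of Theorem~\ref{thm:-Brieskorn} contains exactly one negative-coefficient single-component diagram over $-\Sigma(2,3,11)$, and (by the classification of Legendrian trefoils, which makes $K$ the unique Legendrian right-handed trefoil with $\tb=1$) that diagram is the contact $(-1/2)$-surgery along $K$. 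It remains to see that every positive-coefficient single-component diagram on the list is overtwisted: this is immediate when the $\de_3$-invariant of the diagram, computed from the $\de_3$-surgery formula, differs from $\de_3(\xist)$, and for the finitely many remaining candidates one argues overtwistedness directly (e.g.\ via an explicit overtwisted disk, or using that contact $(+1)$-surgery along a stabilized Legendrian knot is overtwisted). This proves the first claim: the contact $(-1/2)$-surgery along $K$ is the \emph{only} single-component rational contact surgery description of $(-\Sigma(2,3,11),\xist)$.

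The contact surgery numbers now drop out. As $(-\Sigma(2,3,11),\xist)\neq(S^3,\xist)$ we have $\cs\geq1$; the diagram above realizes $\cs(\xist)=1$, and since $-1/2=1/(-2)$ is the reciprocal of an integer it also realizes $\cs_{1/\Z}(\xist)=1$. Because $-1/2$ is neither $\pm1$ nor an integer, the uniqueness just proved shows $(-\Sigma(2,3,11),\xist)$ has no single-component $(\pm1)$- or integer-coefficient diagram (consistently, $\de_3(\xist)$ is not of the form $m(m-1)$, $m\geq0$, so Theorem~\ref{thm:-Brieskorn}(1) also gives $\cs_\Z(\xist)\neq1$); hence $\cs_{\pm1}(\xist)\geq2$ and $\cs_\Z(\xist)\geq2$. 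Conversely the two-component $(-1)$-surgery link from the second paragraph gives $\cs_{\pm1}(\xist)\leq2$, and then $\cs_\Z(\xist)\leq\cs_{\pm1}(\xist)=2$. Altogether $\cs(\xist)=\cs_{1/\Z}(\xist)=1$ and $\cs_{\pm1}(\xist)=\cs_\Z(\xist)=2$.

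The substance of the argument — and the main obstacle — lies in the classification step borrowed from the proof of Theorem~\ref{thm:-Brieskorn}: one must genuinely enumerate all single-component contact surgery diagrams over the homology sphere $-\Sigma(2,3,11)$ (this rests on the short list of topological knot-surgery presentations of this manifold together with the Legendrian classifications of the knots that occur) and then separate the unique tight diagram from the overtwisted ones. The delicate point is that on a homology sphere the $\de_3$-invariant does not by itself distinguish tight from overtwisted, so one needs the extra input of Stein fillability on one side and a hands-on overtwistedness argument on the other precisely for those positive-coefficient diagrams whose $\de_3$-invariant happens to coincide with $\de_3(\xist)$.
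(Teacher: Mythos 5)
Your proposal is correct and follows essentially the same route as the paper: enumerate all single-knot diagrams via the proof of Theorem~\ref{thm:-Brieskorn}, identify the $(-1/2)$-surgery on the $\tb=1$ trefoil as the unique negative-coefficient (hence tight, hence $\xist$) diagram, rule out the rest by comparing $\de_3$-invariants with $\de_3(\xist)=-1$, and use the replacement lemma to get $\cs_{\pm1}\leq 2$. The only cosmetic difference is that your fallback of arguing overtwistedness directly for positive-coefficient diagrams with matching $\de_3$ turns out to be vacuous, since all other diagrams have $\de_3\geq 0$.
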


We remark that this is the first tight contact structure for which we have explicitly computed all different versions of the contact surgery numbers and where $\cs_{\pm1}$ and $\cs_{1/\Z}$ differ.

\subsection{Integral surgery numbers of lens spaces}
For $m\geq1$, we denote by $L(4m+3,4)$ the lens space obtained by topological $(-m-\frac{3}{4})$-surgery on the unknot $U$. We call this surgery diagram the \textit{standard surgery diagram} of $L(4m+3,4)$. 
If we denote by $\mu_U$ the meridian of this unknot we know that $H_1(L(4m+3,4))$ is isomorphic to $\Z_{4m+3}$ generated by $\mu_U$.
The Euler class of a $2$-plane field on $L(4m+3,4)$ is Poincar\'e dual to a first homology class which we always present in this identification with $\Z_{4m+3}$. In other words if we write that a contact structure $\xi$ on a lens space of the form $L(4m+3,4)$ has Euler number $e$ for an integer $e$, then we consider $e$ modulo $4m+3$ and see it as an element in $\Z_{4m+3}$ which we identify with $H_1(L(4m+3,4))$ under the above identification, i.e. we identify $e$ with the element $e\mu \in H_1(L(4m+3,4))$. 

The tight contact structures on $L(4m+3,4)$ are classified by Honda~\cite{Honda_lens}. In particular, any tight contact structure can be obtained by rational contact surgery on a Legendrian unknot (and thus has $\cs=1$), two tight contact structures on a lens space are distinguished by their Euler classes.

For the tight contact structures, we can compute all contact surgery numbers. In particular, there are exactly $m$ non-contactomorphic tight contact structures on $L(4m+3,4)$ that have integer contact surgery number one.

\begin{theorem}\label{thm:lens_tight}
	Let $\xi_k$ be the tight contact structure $\xi$ on the lens space $L(4m+3,4)$ with Euler class $k\in \Z_{4m+3}=H_1(L(4m+3,3))$. Then their contact surgery numbers are as follows:
	\begin{align*}
		\cs_{\pm1}(\xi_k)=\cs_{\Z}(\xi_k)=1,& \,\,\textrm{  if }\,\, k\equiv2m(l+1)+2+4l\,\operatorname{mod}(4m+3), \\
  &\,\,\textrm{  for } l\in\{0,1,2,\ldots,m-1\},\\
		\cs_{\pm1}(\xi_k)=\cs_{\Z}(\xi_k)=2, &\,\,\textrm{  otherwise.}
	\end{align*} 
\end{theorem}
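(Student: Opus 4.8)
The plan is to work with the standard surgery diagram of $L(4m+3,4)$ — topological $(-m-\tfrac34)$-surgery on the unknot — and realize each tight contact structure as a rational contact surgery on a Legendrian unknot. By Honda's classification, the tight contact structures on $L(4m+3,4)$ are enumerated by continued-fraction expansions of $-m-\tfrac34 = -(m+1) + \tfrac14$, equivalently by the choices of rotation numbers on a Legendrian unknot whose Thurston--Bennequin invariant is chosen so that the contact surgery coefficient is $-m-\tfrac34$; converting this rational coefficient into its $\pm1$-surgery normal form via the standard continued-fraction algorithm produces a chain of stabilized Legendrian unknots, and the $2^{\text{(number of stabilizations)}}$ sign choices give exactly the $2m+? $ tight structures, one of which is the first obstacle: I must match Honda's count precisely. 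First I would run the continued-fraction expansion of $-m-\tfrac34$, read off how many tight contact structures arise and their rotation-number data, and then apply the Euler-class formula for rational contact surgery established earlier in the paper (the algorithm/formula advertised in the abstract) to compute, for each tight $\xi$, its Euler class as an element of $\Z_{4m+3}$ generated by $\mu_U$. This computation should yield precisely the arithmetic progression $k \equiv 2m(l+1)+2+4l \pmod{4m+3}$ for $l \in \{0,1,\dots,m-1\}$ as the set of Euler classes realizable by contact surgery along a \emph{single} Legendrian unknot with an \emph{integer} (indeed $\pm1$ after stabilization) coefficient, giving the $\cs_{\pm1}=\cs_\Z=1$ half of the theorem.

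For the lower bound — showing that the remaining tight contact structures have $\cs_{\pm1}(\xi_k)=\cs_\Z(\xi_k)=2$, hence in particular $\cs_\Z \neq 1$ — the key point is that a contact structure with $\cs_\Z = 1$ must arise from contact $(\pm1)$- or more generally integer surgery along some Legendrian knot $K$ in $(S^3,\xist)$. Since the surgered manifold is the lens space $L(4m+3,4)$, the underlying smooth surgery on $K \subset S^3$ must yield this lens space; by the cyclic surgery theorem (or by the classification of lens-space surgeries on knots in $S^3$, which for these small orders forces $K$ to be a torus knot or the unknot and the coefficient to be essentially determined), $K$ is constrained to a short list, and $K$ being a Legendrian unknot with an appropriate coefficient is the only case compatible with obtaining a \emph{tight} contact structure and with the surgery coefficient's sign. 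Running the Euler-class formula again over all admissible $(K, \text{coefficient}, \text{rotation number})$ shows the realizable Euler classes are exactly the progression above, so any $\xi_k$ with $k$ outside this set cannot have $\cs_\Z=1$; thus $\cs_\Z(\xi_k) \geq 2$. The matching upper bound $\cs_{\pm1}(\xi_k)\leq 2$ follows because every tight contact structure on $L(4m+3,4)$ has $\cs = 1$ (via Honda's rational surgery on a Legendrian unknot), and converting that single rational surgery into $\pm1$-surgeries via the stabilization/continued-fraction algorithm, together with a Rolfsen-twist/slam-dunk reorganization, produces a $2$-component $\pm1$-surgery diagram — here I would exhibit the explicit link, splitting $-m-\tfrac34$ as, say, $(-1)\ \#\ (\text{something})$ realizable on two Legendrian unknots.

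The main obstacle I anticipate is the lower-bound step: ruling out all non-unknot Legendrian knots $K$ whose integer surgery could produce $L(4m+3,4)$ with a tight contact structure. The smooth side is controlled by known lens-space surgery results, but I must also ensure no \emph{torus knot} (which does admit lens-space surgeries) sneaks in with a coefficient and Legendrian representative producing exactly one of the ``otherwise'' tight structures; this requires either a convexity/tightness obstruction (e.g. the surgered contact manifold being overtwisted for the relevant coefficient signs, as in the classification of tight surgeries on Legendrian unknots referenced in the abstract) or a direct Euler-class incompatibility check. A secondary, more bookkeeping-heavy obstacle is making the continued-fraction-to-rotation-number dictionary align exactly with the closed-form $2m(l+1)+2+4l$; this is a finite but intricate modular computation, and care with orientation conventions for $\mu_U$ and for the sign of the Euler class will be essential.
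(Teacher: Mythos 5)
There is a genuine gap in your classification of the tight structures with $\cs_{\Z}=1$, and it comes from having the roles of the unknot and the torus knot reversed. An integer contact surgery coefficient $n$ along a Legendrian knot $K$ corresponds to the topological coefficient $n+\tb(K)\in\Z$, so a tight structure with $\cs_{\Z}=1$ must arise from an \emph{integral} smooth surgery producing $L(4m+3,4)$. Integral surgery on the unknot only yields lens spaces of the form $L(p,1)$, so no Legendrian unknot with integer contact surgery coefficient produces $L(4m+3,4)$ at all; your plan to ``run the Euler-class formula over all admissible unknot data'' to identify the $\cs_{\Z}=1$ structures is therefore vacuous. By the cyclic surgery theorem combined with Rasmussen's classification of integral lens-space surgeries, the only integral surgery description of $L(4m+3,4)$ along a single knot is $(-4m-3)$-surgery on the torus knot $T_{(2,-(2m+1))}$ --- exactly the knot you propose to rule out. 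In the paper's argument this torus knot is the \emph{source} of the $\cs_{\Z}=1$ tight structures: its $m$ Legendrian representatives with maximal $\tb=-4m-2$ and $\rot\in\{1,3,\ldots,2m-1\}$ admit contact $(-1)$-surgery, which is Stein fillable and hence tight, and their Euler classes, computed via $\PD(\e)=\rot(K)\mu_T$ together with $\mu_T\mapsto 2(m+1)\mu$, give precisely the progression $2m(l+1)+2+4l$. The stabilized representatives ($\tb<-4m-2$) force a positive contact surgery coefficient on a stabilized knot and give overtwisted structures by Lisca--Stipsicz, which is what pins the list down to exactly $m$ tight structures.

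Your single-unknot rational surgery description and its continued-fraction expansion are the right tools for showing $\cs=1$ and for the upper bound $\cs_{\pm1}(\xi_k)\leq 2$ (equivalently, Honda's two-component chain of Legendrian unknots), and that part of your outline matches the paper. But that route cannot decide which tight structures have integer contact surgery number one, since the resulting contact surgery coefficients on the unknot are never integers; without switching to the torus knot analysis, both the identification of the arithmetic progression and the lower bound $\cs_{\Z}(\xi_k)\geq 2$ for the remaining structures are out of reach.
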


We conclude that several tight contact structures on these lens spaces have unique contact surgery diagrams along a single Legendrian knot.

\begin{corollary}\label{thm:cor_lens}
	Let $l\in\{0,1,2,\ldots,m-1\}$ and let $K$ be a Legendrian knot in $(S^3,\xist)$ such that for $n\in\Z$ contact $n$-surgery along $K$ yields $(L(4m+3,4),\xi_{2m(l+1)+2+4l})$. Then $K$ is the unique Legendrian realization of $T_{(2,-(2m+1))}$ with $\tb=-4m-2$ and $\rot=1+2l$ and the contact surgery coefficient $n$ is $-1$.
\end{corollary}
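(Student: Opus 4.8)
This statement is really a uniqueness assertion distilled from the proof of Theorem~\ref{thm:lens_tight}: there one shows that $\cs_\Z(\xi_k)=1$ exactly for the Euler classes $k=2m(l+1)+2+4l$, and the proof produces, for each of them, a concrete single-component integer contact surgery diagram. The present corollary records that this diagram is the unique one; accordingly, suppose $K\subset(S^3,\xist)$ is Legendrian, $n\in\Z\setminus\{0\}$, and contact $n$-surgery along $K$ is contactomorphic to $(L(4m+3,4),\xi_{2m(l+1)+2+4l})$. One must recover the smooth type of $K$, its classical invariants, and the value of $n$.

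\emph{Step 1 (the smooth picture).} Contact $n$-surgery along $K$ with $n\in\Z$ is topologically an integral surgery, with coefficient $\tb(K)+n$, along the underlying smooth knot; hence that knot admits an integral surgery yielding $L(4m+3,4)$ with its orientation as the boundary of a Stein filling of $\xi_{2m(l+1)+2+4l}$. I would then invoke the classification of knots in $S^3$ admitting lens space surgeries: Moser's theorem, matched carefully with the paper's convention $L(4m+3,4)=S^3_{-(4m+3)/4}(U)$, shows that $L(4m+3,4)$ is obtained by $-(4m+3)$-surgery on $T_{(2,-(2m+1))}$, while the cyclic surgery theorem together with Greene's solution of the lens space realization problem (and the rigidity of the underlying changemaker analysis) excludes every other knot and slope. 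Hence the underlying smooth knot is $T_{(2,-(2m+1))}$ and the topological surgery coefficient is $-(4m+3)$. I expect this step to be the main obstacle: it is the only place requiring deep smooth input, and the only place where \emph{all} competing knot types must be ruled out.

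\emph{Step 2 ($\tb$ and $n$).} By the Legendrian classification of negative torus knots (Etnyre--Honda), $\tb(K)\le\overline{\tb}\big(T_{(2,-(2m+1))}\big)=-(4m+2)$, while $\tb(K)+n=-(4m+3)$ forces $n\ge-1$. The value $n=0$ is excluded by hypothesis, and if $n\ge1$ then $\tb(K)\le-(4m+4)<\overline{\tb}$, so $K$ is a stabilization; but a contact surgery with a positive surgery coefficient along a stabilized Legendrian knot is overtwisted, contradicting the tightness of $\xi_{2m(l+1)+2+4l}$. Hence $\tb(K)=-(4m+2)$ and $n=-1$; in particular $K(n)$ is a Legendrian surgery.

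\emph{Step 3 ($\rot$).} By Etnyre--Honda the maximal-$\tb$ Legendrian representatives of $T_{(2,-(2m+1))}$ are classified by their rotation number; since the Euler class of an integer contact surgery does not depend on the orientation of the surgery knot, we may assume $\rot(K)\ge0$, so $\rot(K)\in\{1,3,\dots,2m-1\}$. For such a knot with $\rot=\rho$ I would apply the Euler class formula and algorithm from earlier in the paper to contact $(-1)$-surgery along it, and carry out the homological change of basis that expresses the Poincar\'e dual of the Euler class through the fixed generator $\mu_U$ of $H_1(L(4m+3,4))=\Z_{4m+3}$; this yields the Euler class as an explicit affine function of $\rho$ modulo $4m+3$. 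Since tight contact structures on lens spaces are distinguished by their Euler classes, it then remains to solve the resulting congruence: one checks that the $\rho$-coefficient is a unit, so the function is injective on the admissible range, and that $\rho=1+2l$ is the solution giving Euler class $2m(l+1)+2+4l$. Therefore $\rot(K)=1+2l$, and the Etnyre--Honda uniqueness statement identifies $K$ with the unique Legendrian $T_{(2,-(2m+1))}$ having $\tb=-4m-2$ and $\rot=1+2l$, as claimed.
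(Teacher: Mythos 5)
Your proposal is correct and follows essentially the same route as the paper: the paper proves this corollary by pointing back to the proof of Theorem~\ref{thm:lens_tight}, which likewise pins down the smooth type via the cyclic surgery theorem and the classification of integral lens-space surgeries (the paper cites Rasmussen where you cite Greene/Moser), excludes $n\neq-1$ because positive contact surgery on a stabilized knot is overtwisted by Lisca--Stipsicz, and identifies $\rot(K)$ by computing the Euler class $\rot(K)\mu_T=2\rot(K)(m+1)\mu$ and using that tight structures on lens spaces are distinguished by their Euler classes. No gaps; the only difference is which reference is used for the smooth classification step.
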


For the overtwisted contact structures we can completely classify the contact structures with $\cs_{\pm1}=1$ and with $\cs_{\Z}=1$. In general it follows from~\cite{EKS_contact_surgery_numbers} that any overtwisted contact structure on $L(4m+3,3)$ has $\cs_{\pm1}\leq3$. 
Since the first homology $H_1(L(4m+3,4))$ has no $2$-torsion overtwisted contact structures are completely determined by their Euler class and their $\de_3$-invariants~\cite{Gompf_Stein,Ding_Geiges_Stipsicz}. 

\begin{theorem}\label{thm:lens_overtwisted}\hfill
	\begin{enumerate}
		\item An overtwisted contact structure on $L(4m+3,4)$ has contact surgery number $\cs_{\pm1}=1$ if and only if its tuple $(\e,\de_3)$ of Euler class $\e$ and $\de_3$-invariant is of the form
		\begin{equation*}
			\left(2m+2+k,\frac{3m+2-k(k+1)}{4m+3} +\frac{1}{2}       \right)\,\,\textrm{ for }\,\,k\in\{-1,0,1,2,\ldots,m\}.
		\end{equation*}
		\item An overtwisted contact structure on $L(4m+3,4)$ has contact surgery number $\cs_{\Z}=1$ if and only if it appears in the list of $(1)$ or if its tuple $(\e,\de_3)$ of Euler class $\e$ and $\de_3$-invariant is of the form
		\begin{equation*}
			\left(\pm(k+1),-\frac{(k+1)^2}{4m+3} -k-m-\frac{1}{2}\right)  \,\,\textrm{ for }\,\,k\leq-m-2.
		\end{equation*}
	\end{enumerate}
\end{theorem}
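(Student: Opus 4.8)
The proof will follow the template of the overtwisted parts of Theorems~\ref{thm:Brieskorn} and~\ref{thm:-Brieskorn}. The first step is the standard reduction: $\cs_{\pm1}(\xi)=1$ (respectively $\cs_{\Z}(\xi)=1$) means exactly that $\xi$ is obtained by a single contact $(\pm1)$-surgery (respectively a single contact $n$-surgery with $n\in\Z\setminus\{0\}$) along some Legendrian knot $K\subset(S^3,\xist)$. Every contact surgery with \emph{negative} surgery coefficient along a Legendrian knot in $(S^3,\xist)$ can be converted into a sequence of contact $(-1)$-surgeries and therefore produces a Stein fillable, in particular tight, contact manifold; hence an \emph{overtwisted} $\xi$ with $\cs_{\Z}(\xi)=1$ can only arise from a \emph{positive} integer surgery coefficient. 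So part~(1) reduces to analysing contact $(+1)$-surgeries and part~(2) to contact $n$-surgeries with $n\geq1$.

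Next I would pin down the underlying smooth knot and framing. If contact $n$-surgery along $K$ with $\tb(K)=t$ yields $L(4m+3,4)$, then the smooth surgery coefficient is the integer $t+n$, and since $|H_1(L(4m+3,4))|=4m+3$ we must have $t+n=\pm(4m+3)$. An integer surgery on the unknot produces a lens space of the form $L(\,\cdot\,,1)$, which for $m\geq1$ is not orientation-preservingly diffeomorphic to $L(4m+3,4)$, so $K$ is knotted. By Moser's classification of surgeries on torus knots together with the resolution of the lens space realization problem, the only knots admitting an integer surgery producing $L(4m+3,4)$ with the correct orientation are $T_{(2,-(2m+1))}$, with smooth coefficient $-(4m+3)$ (the knot already appearing in Corollary~\ref{thm:cor_lens}), and, for the opposite sign of coefficient, its mirror $T_{(2,2m+1)}$ with coefficient $+(4m+3)$; here one uses the congruence $(2m+1)^2\equiv m+1\equiv4^{-1}\pmod{4m+3}$. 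Thus $K$ is a Legendrian representative of one of these two torus knots, whose Legendrian isotopy type is governed by the Etnyre--Honda classification. Combined with the sign constraint of the first step, this leaves, for $\cs_{\pm1}=1$, only suitably stabilised representatives of $T_{(2,-(2m+1))}$ with contact $(+1)$-surgery, and, for $\cs_{\Z}=1$, in addition the more heavily stabilised representatives of both torus knots with larger positive coefficients.

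For each such representative $K$ --- recorded by its pair $(\tb,\rot)$ --- and each admissible positive surgery coefficient I would then enumerate all of the contact structures that occur (for coefficient $\geq2$ the contact surgery is not unique but is determined by the finitely many stabilisation choices in the Ding--Geiges algorithm), and compute the pair $(\e,\de_3)$ for each: the Euler class $\e$ from the algorithm and formula for the Euler class of a contact structure from a rational contact surgery description developed earlier in the paper, and $\de_3$ from the Ding--Geiges--Stipsicz formula in terms of the rotation numbers and the signature and Euler characteristic of the associated $4$-manifold. Since $H_1(L(4m+3,4))$ has no $2$-torsion, the pair $(\e,\de_3)$ determines a contact structure on $L(4m+3,4)$ up to the tight/overtwisted dichotomy; comparing the list of pairs just produced with the $(\e,\de_3)$ of Honda's (Stein fillable, hence tight) contact structures on $L(4m+3,4)$ identifies which outputs are overtwisted, and it remains to collect the surviving pairs and re-index them by the single parameter $k$ in the statement. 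This yields the $(m+2)$-member family of part~(1), and, adding the contributions of the $n\geq2$ surgeries on the two mirror torus knots, the symmetric family $\bigl(\pm(k+1),\,-\tfrac{(k+1)^2}{4m+3}-k-m-\tfrac12\bigr)$, $k\leq-m-2$, of part~(2).

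I expect the main obstacle to be two-fold. On the conceptual side, making the smooth classification airtight --- being certain that $T_{(2,\pm(2m+1))}$ are the only knots whose integer surgeries give $L(4m+3,4)$, and keeping all the orientation conventions (for $L(p,q)$, for mirroring, and for the surgery) mutually consistent --- requires some care with the lens space realization results. On the computational side the proof is essentially a careful bookkeeping exercise: tracking rotation numbers through the mountain ranges of the torus knots, feeding them correctly through the Euler-class algorithm (whose sign conventions and the identification $H_1(L(4m+3,4))\cong\Z_{4m+3}$ are easy to get wrong), running over all stabilisation choices when the coefficient is at least $2$, and finally repackaging the resulting two-variable lists of invariants into the clean one-parameter families in the theorem.
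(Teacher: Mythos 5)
Your overall strategy---reduce to positive contact surgery coefficients, classify the smooth one-component surgery descriptions, enumerate Legendrian representatives via Etnyre--Honda, and compute $(\e,\de_3)$ via the transformation lemma, the $\de_3$-formula and the Euler class algorithm---is the same as the paper's. But there are two genuine problems. First, your second smooth description is wrong: $+(4m+3)$-surgery on $T_{(2,2m+1)}$ is the mirror of $-(4m+3)$-surgery on $T_{(2,-(2m+1))}$ and therefore yields $-L(4m+3,4)$, which is \emph{not} orientation-preservingly diffeomorphic to $L(4m+3,4)$: that would require $-4\equiv 4^{\pm1}\bmod (4m+3)$, i.e.\ $(4m+3)\mid 8$ or $(4m+3)\mid 17$, impossible for $m\geq 1$. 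The congruence $(2m+1)^2\equiv 4^{-1}$ that you invoke only identifies the lens space up to orientation. Theorem~\ref{thm:surgery_diagrams_of_lens} (cyclic surgery theorem plus Rasmussen) shows that the \emph{only} integral description of $L(4m+3,4)$ is $T_{(2,-(2m+1))}$ with coefficient $-(4m+3)$. In particular, the $\pm$ in part~(2) does not come from a pair of mirror knots; it comes from the two possible signs of the extra stabilization of the Legendrian push-off produced by the transformation lemma (rotation vector $(r,r\pm1)$), applied to the stabilized representatives of the single knot $T_{(2,-(2m+1))}$ with $\tb\leq-4m-5$. Enumerating surgeries on $T_{(2,2m+1)}$ as well would pollute the list with tuples belonging to the wrong manifold.

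Second, deciding which outputs are overtwisted by comparing $(\e,\de_3)$ with the invariants of Honda's tight structures is logically insufficient: homotopy invariants do not detect tightness, so a surgery output sharing its $(\e,\de_3)$ with a tight structure could nevertheless be overtwisted and would then be wrongly discarded, breaking the ``if'' direction of the statement. The paper instead uses the Lisca--Stipsicz criterion that any positive contact surgery along a \emph{stabilized} Legendrian knot is overtwisted. Since the maximal Thurston--Bennequin invariant of $T_{(2,-(2m+1))}$ is $-4m-2$, every representative with $\tb=-4m-4$ (contact $(+1)$-surgery, part~(1)) or $\tb\leq-4m-5$ (contact $n$-surgery with $n\geq2$, part~(2)) is stabilized, so all these outputs are overtwisted, while the $\tb=-4m-2$ representatives give Legendrian $(-1)$-surgeries and hence exactly the tight structures of Theorem~\ref{thm:lens_tight}. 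With these two corrections the bookkeeping you describe does reproduce the stated lists.
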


In particular, the above result implies the following.

\begin{corollary}\label{cor:integerlenscor}\hfill
\begin{enumerate}
		\item For any given $e\in\{2m+1,2m+2,\ldots,3m+2\}$ there exists exactly one overtwisted contact structure $\xi$ on $L(4m+3,4)$ with Euler class $\e(\xi)=e$ that has $\cs_{\pm1}=1$. 
        \item For a given $e\in\Z_{4m+3}\setminus\{2(m+2)\}$ there exists infinitely many pairwise non-contactomorphic contact structures on $L(4m+3,4)$ with Euler classes $e$ that have $\cs_{\Z}=1$. And there exist also infinitely many pairwise non-contactomorphic contact structures on $L(4m+3,4)$ with Euler classes $e$ that have $\cs_{\Z}>1$. 
    \end{enumerate}
\end{corollary}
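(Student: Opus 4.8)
The plan is to derive the corollary directly from Theorem~\ref{thm:lens_overtwisted}, using two facts recalled above: (i) overtwisted contact structures on $L(4m+3,4)$ are classified up to isotopy by the pair $(\e,\de_3)$ --- so, since $\de_3$ is a diffeomorphism invariant, two overtwisted contact structures with differing $\de_3$-invariant are non-contactomorphic --- and for a fixed Euler class $e\in\Z_{4m+3}$ the $\de_3$-invariants that occur run over a full coset of $\Z$ in $\Q$ (in particular there are infinitely many such contact structures with $\de_3$ larger than any prescribed number); and (ii) every overtwisted contact structure on $L(4m+3,4)$ satisfies $\cs_\Z\le\cs_{\pm1}\le3$ by~\cite{EKS_contact_surgery_numbers}. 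Since two tight contact structures on $L(4m+3,4)$ with the same Euler class are contactomorphic, in each of the counts below the tight contact structures contribute at most one term and may be ignored.

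Part~(1) is immediate from Theorem~\ref{thm:lens_overtwisted}(1): the overtwisted contact structures with $\cs_{\pm1}=1$ are exactly those whose pair $(\e,\de_3)$ equals $\bigl(2m+2+k,\ \tfrac{3m+2-k(k+1)}{4m+3}+\tfrac12\bigr)$ for some $k\in\{-1,0,\dots,m\}$. As $m+2\le 4m+3$, the $m+2$ Euler classes $2m+2+k$ are pairwise distinct in $\Z_{4m+3}$ and run over precisely $\{2m+1,2m+2,\dots,3m+2\}$, each arising from a unique $k$. Hence for every $e$ in this set there is exactly one admissible pair $(\e,\de_3)$, i.e.\ exactly one overtwisted contact structure with Euler class $e$ and $\cs_{\pm1}=1$, and none for $e$ outside this set.

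For part~(2) I would first rewrite the second family of Theorem~\ref{thm:lens_overtwisted}(2) by substituting $j:=-(k+1)$: as $k$ runs over the integers $\le-m-2$, the parameter $j$ runs over the integers $\ge m+1$, and the family reads: for every integer $j\ge m+1$ there is an overtwisted contact structure with $\cs_\Z=1$, Euler class $\pm j$ (reduced mod $4m+3$, both signs occurring), and $\de_3=-\tfrac{j^2}{4m+3}+j-m+\tfrac12$. Now fix $e\in\Z_{4m+3}$. There are infinitely many integers $j\ge m+1$ with $j\equiv e\pmod{4m+3}$, hence infinitely many overtwisted contact structures with Euler class $e$ and $\cs_\Z=1$; a one-line computation shows that the values $-\tfrac{j^2}{4m+3}+j$ for distinct $j$ coincide only when $j+j'=4m+3$, which is impossible for distinct integers $j,j'\ge m+1$ lying in a single residue class mod $4m+3$, so these $\de_3$-invariants are pairwise distinct and the corresponding contact structures pairwise non-contactomorphic by~(i). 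This gives the first assertion of part~(2) (for every $e$, in particular $e\ne 2(m+2)$). For the second assertion, the crucial observation is that $\de_3$ is \emph{uniformly bounded above} over the whole list in Theorem~\ref{thm:lens_overtwisted}(2): on the first family $k(k+1)\ge0$ forces $\de_3<\tfrac32$ (using $3m+2<4m+3$), and on the reparametrised second family $j\mapsto-\tfrac{j^2}{4m+3}+j-m+\tfrac12$ is a downward parabola with maximum value $\tfrac54$; hence every overtwisted contact structure on $L(4m+3,4)$ with $\cs_\Z=1$ has $\de_3<2$. Therefore, for a fixed Euler class $e$, each of the infinitely many overtwisted contact structures with Euler class $e$ and $\de_3\ge2$ satisfies $\cs_\Z\ne1$, hence $\cs_\Z>1$ (and $\cs_\Z\le3$ by~(ii)); and these are pairwise non-contactomorphic by~(i).

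Given Theorem~\ref{thm:lens_overtwisted}, what remains is essentially bookkeeping and I do not anticipate a genuine obstacle: the only delicate points are the passage between the two parametrisations of the surgery families, the verification that the $\de_3$-values attached to a fixed Euler class are pairwise distinct, and --- most importantly --- the observation that the $\de_3$-invariants of all contact structures with $\cs_\Z=1$ are bounded above, which is precisely what forces infinitely many contact structures with any prescribed Euler class to have $\cs_\Z>1$.
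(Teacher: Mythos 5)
Your argument is correct and is in spirit the argument the paper intends, but it is considerably more self-contained than what the paper actually writes: the paper's proof of this corollary is a one-line reference to Theorem~\ref{thm:lens_overtwisted} together with Corollary~\ref{cor:lens_rational}, and the proof of the latter only constructs the auxiliary overtwisted structures with unbounded $\de_3$ for the specific Euler classes $0$ (for $m$ odd) and $1$ (for $m$ even), handling the families of Theorem~\ref{thm:lens_overtwisted} by the case analysis (A)--(D) there. Your two additions are genuine improvements in completeness: (a) the reparametrisation $j=-(k+1)$ of the second family and the observation that distinct $j\geq m+1$ in a fixed residue class modulo $4m+3$ give distinct $\de_3$ (collision only at $j+j'=4m+3$), which yields the infinitude in the first half of part~(2); and (b) the \emph{uniform} bound $\de_3<\tfrac32$ over the entire $\cs_\Z=1$ list (from $k(k+1)\geq0$ in the first family and the vertex value $\tfrac54$ of the parabola in the second), which is exactly what is needed to run the connected-sum argument for an \emph{arbitrary} Euler class $e$ rather than only for $e\in\{0,1\}$. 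One point you should flag explicitly rather than pass over with ``in particular $e\neq 2(m+2)$'': your argument establishes both assertions of part~(2) for \emph{every} $e\in\Z_{4m+3}$, with nothing breaking at $e=2(m+2)$, so either the paper's exclusion of that value is superfluous or there is a constraint hidden in the statement of Theorem~\ref{thm:lens_overtwisted} (e.g.\ on which pairs $(t,r)$ of Thurston--Bennequin and rotation numbers are actually realised) that neither your proof nor the theorem's statement records; you should reconcile this before relying on the full strength of your version.
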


\subsection{Rational surgery numbers of lens spaces}
We also have similar results for rational contact surgery numbers. All tight contact structures on lens spaces have $\cs=1$. We also classify the overtwisted contact structures on $L(4m+3,4)$ that have $\cs=1$ in Theorem~\ref{thm:lens_rational}. But here the result is much more evolved, so in this introduction, we only state one of its corollaries.

\begin{corollary}\label{cor:lens_rational}
For every given $m\geq1$ there exists infinitely many pairwise non-contactomorphic contact structures on $L(4m+3,4)$ that have $\cs>1$. 
\end{corollary}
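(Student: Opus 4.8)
\emph{Proof proposal.} This will be deduced from the classification in Theorem~\ref{thm:lens_rational}. Recall that since $H_1(L(4m+3,4))\cong\Z_{4m+3}$ has no $2$-torsion, an overtwisted contact structure on $L(4m+3,4)$ is determined up to isotopy by the pair consisting of its Euler class and its $\de_3$-invariant, that every admissible such pair is realized by a (unique up to isotopy) overtwisted contact structure, and that $\cs$ is a contactomorphism invariant. The plan is to show that the pairs realized with $\cs=1$ are sparse among all admissible pairs, to produce infinitely many admissible pairs missing from that list, to realize them by overtwisted contact structures (which then automatically satisfy $\cs>1$), and finally to pass from ``non-isotopic'' to ``non-contactomorphic''.

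First I would analyse the shape of the $\cs=1$ locus. Any contact surgery with negative coefficient on a Legendrian knot in $(S^3,\xist)$ decomposes into contact $(-1)$-surgeries and hence yields a Stein fillable --- in particular tight --- contact manifold; therefore an \emph{overtwisted} contact structure with $\cs=1$ is obtained by a contact $r$-surgery with $r>0$ along a single Legendrian knot $K$. The underlying smooth surgery is $(\tb(K)+r)$-surgery on $K$ producing $L(4m+3,4)$, so by the cyclic surgery theorem, Moser's classification of lens space surgeries on torus knots, and the classification of knots in $S^3$ admitting a fixed lens space as surgery, $K$ lies in one of finitely many knot types, the smooth surgery slope takes one of finitely many values, and the induced contact coefficient $r$ has bounded denominator --- so the Ding--Geiges conversion of the rational coefficient into a $\pm1$-surgery diagram has bounded combinatorial complexity. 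Feeding this diagram into the Euler class formula and the $\de_3$-formula yields the list of Theorem~\ref{thm:lens_rational}: a finite union of families in each of which, after the Euler class is fixed, $\de_3$ is (up to a bounded ambiguity coming from the finitely many stabilisation choices in the conversion) a quadratic polynomial in the single remaining parameter, namely the Thurston--Bennequin invariant of the chosen Legendrian representative of $K$. Consequently, for each fixed Euler class $e$ the $\de_3$-invariants occurring with $\cs=1$ have density zero in the admissible coset; as in Theorem~\ref{thm:lens_overtwisted} I expect them in fact to be bounded on one side by a constant depending only on $m$.

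Granting this, fix an Euler class $e_0$ for which the conclusion holds (one must exist, since there are only finitely many Euler classes and infinitely many contact structures with $\cs>1$ in total). The admissible $\de_3$-invariants with Euler class $e_0$ form an unbounded coset of an infinite cyclic subgroup of $\Q$, so infinitely many of them --- for instance all sufficiently large ones --- are not realized with $\cs=1$. Each such pair $(e_0,\de_3)$ is realized by a unique-up-to-isotopy overtwisted contact structure, which then has $\cs>1$. Since the mapping class group of $L(4m+3,4)$ is finite, each contactomorphism class of contact structures on $L(4m+3,4)$ contains only finitely many isotopy classes; hence the infinitely many non-isotopic examples just produced fall into infinitely many contactomorphism classes, and the corollary follows. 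The main obstacle is the quantitative statement in the second paragraph: one has to verify, from the explicit but intricate list of Theorem~\ref{thm:lens_rational}, that fixing the Euler class really does reduce each two-parameter family of Legendrian-surgery data to an essentially one-parameter quadratic family of $\de_3$-values, so that for a suitable $e_0$ cofinitely many admissible $\de_3$ are missed; in practice this is most cleanly done by exhibiting, for each $m$, one explicit infinite subsequence $(e_0,d_j)$ with $d_j\to\infty$ along a fixed residue class and checking directly that no $d_j$ appears in the list.
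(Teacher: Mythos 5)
Your overall strategy is the same as the paper's: produce, for a fixed Euler class, infinitely many admissible $(\e,\de_3)$-pairs that do not occur in the $\cs=1$ list of Theorem~\ref{thm:lens_rational}, realize each by an overtwisted contact structure via Eliashberg's classification, and note that distinct $\de_3$-invariants already force the structures to be pairwise non-contactomorphic (your detour through the mapping class group is unnecessary for this). The paper implements the realization step concretely by taking connected sums of a fixed tight structure on $L(4m+3,4)$ (with Euler class $0$ for $m$ odd, $1$ for $m$ even) with the overtwisted structures $\xi_N$ on $S^3$, producing structures $\xi_N^m$ whose $\de_3$-invariant is shifted by an arbitrary integer $N$, and then checks family by family that $\xi_N^m$ is absent from the lists for all odd $N\geq\max\{7,m+3\}$.

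However, the step you yourself flag as ``the main obstacle'' is a genuine gap, and your proposed way of closing it rests on a false expectation. You predict that, after fixing the Euler class, the $\de_3$-invariants realized with $\cs=1$ are ``bounded on one side by a constant depending only on $m$.'' This fails: Families (14) and (27) of Table~\ref{tab:horrible} contain contact structures with the relevant Euler class whose $\de_3$-invariants are unbounded above, owing to the term $k(1+n)^2$ with $k>0$ and $n<-1$. The paper must treat these two families by a separate parity argument: within them the integer shift $N$ is always even, so choosing $N$ odd (and large enough to clear the families whose $\de_3$ really is bounded above) avoids them. Without identifying this phenomenon, your ``density zero'' heuristic is unsubstantiated and the argument is incomplete exactly where the real work lies; your fallback suggestion of checking an explicit subsequence $d_j\to\infty$ along a fixed residue class is indeed what is needed, but it is not carried out. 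A smaller logical slip: your parenthetical justification for the existence of a good Euler class $e_0$ (``since there are \ldots infinitely many contact structures with $\cs>1$ in total'') assumes the statement being proved; once the family-by-family analysis is done, the chosen Euler class works and no such argument is required.
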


\subsection{Outline of the arguments}
We briefly outline our main arguments. Let $M$ be either the Brieskorn sphere $\pm\Sigma(2,3,11)$ or a lens space which is of the form $L(4m+3,4)$. Then there are results of Culler--Gordon--Luecke--Shalen~\cite{cyclic_dehn_surgery}, Baldwin--Sivek~\cite{baldwin2022characterizing}, and Rasmussen~\cite{rasmussen2007lens} that together completely classify all surgery diagrams along a single knot yielding $M$. It turns out that these are all surgeries along unknots, certain torus knots, and twist knots for which the classification of Legendrian realizations is known~\cite{Eliashberg_Fraser,Etnyre_Honda_torus_knots,Etnyre_Ng_vertesi_twist,Leg_knot_atlas}. Moreover, the classification of tight contact structures on $M$ is known by~\cite{Honda_lens,Ghiggini_Schoeneberger} and the overtwisted contact structures are determined by their homotopical invariants~\cite{Eliashberg_OT}. Since $M$ is a rational homology sphere that has no $2$-torsion in its first homology, these homotopical invariants are given by the Euler class, and the $\de_3$-invariant~\cite{Gompf_Stein,Ding_Geiges_Stipsicz}. Thus we can enumerate all possible contact surgery diagrams along a single Legendrian knot in $(S^3,\xist)$ yielding a contact structure on $M$. Then we check which of these surgeries yield tight and which yield overtwisted manifolds. We use the formulas from~\cite{Durst_Kegel_rot_surgery} to compute their homotopical invariants. This yields the classification of contact structures on $M$ that have contact surgery number $1$ from which we can deduce the claimed results. Opposed to the results obtained in~\cite{EKS_contact_surgery_numbers}, here we also need to work with Legendrian non-simple knots (the Legendrian twist knots) and since we do not just work on homology spheres, we have more complicated surgery coefficients and need to take the Euler classes into account. For that we discuss in Theorem~\ref{thm:Euler_algorithm} how to compute the Euler classes of general rational contact surgery diagrams which might be of independent interest. In Section~\ref{sec:prelim} we recall the needed background on contact surgery and in Section~\ref{sec:proof} we present the details of the above arguments.

\subsection*{Conventions}
All contact structures are assumed to be positive and coorientable. We present Legendrian knots always in their front projection. Since a contact surgery diagram determines a contact manifold only up to contactomorphism, we consider contact manifolds only up to contactomorphism (and not up to isotopy). Moreover, the contactomorphism type of a contact surgery is independent of an orientation of the Legendrian surgery link. Thus we mainly consider unoriented Legendrian links in $(S^3,\xist)$ up to isotopy of unoriented links. Then the rotation number of an unoriented Legendrian knot is only defined up to sign. For some calculations, it will be helpful to choose orientations on Legendrian links in which case the rotation numbers are always understood with signs. We normalize the $\de_3$ invariant such that $\de_3(S^3,\xist)=0$ as done for example in~\cite{casals2021stein,EKS_contact_surgery_numbers,surgery_graph}. With that normalization, the $\de_3$-invariant is additive under connected sum and takes integer values on homology spheres.

\subsection*{Acknowledgments} The authors are supported by \textit{SFB/TRR 191 Symplectic Structures in Geometry, Algebra and Dynamics}, funded by the Deutsche Forschungsgemeinschaff (Project-ID 281071066-TRR 191).

We would like to thank John Etnyre for many helpful suggestions and discussions.

This work started in February 2023 at the conference \textit{Winterbraids XII} in Tours and was continued at the workshop and summer school on \textit{Low-Dimensional Topology} at IISER Pune. We thank the organizers of both events for their support. The authors wish to thank Peter Albers, for his hospitality when R.C. visited the University of Heidelberg. Part of the writing was done when R.C.\ was visiting the Max-Plank Institute of Mathematics in Bonn. She expresses her gratitude for their hospitality. 

\section{Preliminaries}\label{sec:prelim}
In this section, we briefly recall the necessary background on contact surgery and how to compute the homotopical invariants of a contact structure from one of its contact surgery diagrams. For background on contact geometry, we refer to~\cite{Geiges_book}. For more on contact surgery and the homotopical invariants of contact structures, we refer to~\cite{Gompf_Stein,Ding_Geiges_Surgery,Ding_Geiges_Stipsicz,Ozbagci_Stipsicz_book,Durst_Kegel_rot_surgery,phdthesis,Legendrian_knot_complement,casals2021stein,EKS_contact_surgery_numbers}.

\subsection{Contact surgery}
Let $K$ be a Legendrian knot in $(S^3,\xist)$. We perform Dehn surgery on $K$ with contact surgery coefficient $r\in\Q\setminus\{0\}$ (i.e.\ measured with respect to the contact longitude of $K$, obtained by pushing $K$ into the Reeb-direction). Then there exist (up to contactomorphism) finitely many tight contact structures on the newly glued-in solid torus that fit together with the old contact structure to give a global contact structure on the surgered manifold~\cite{Honda_lens}. We write $K(r)$ for one of these contact manifolds obtained by \textit{contact $r$-surgery} along $K$. 
If in addition, the contact surgery coefficient is of the form $1/n$, for $n \in \Z$, then the contact structure on the surgered manifold is unique up to contactomorphism.

It will be convenient to introduce the following notation. We write $K{\def\svgwidth{1,6ex}\,\,\,\,} K$ for the Legendrian link consisting of $K$ together with a Legendrian push-off of $K$. We denote a copy of $K$ with $n$ extra stabilizations by $K_n$. Here the $n$-extra stabilizations are not specified but fixed. If the knot $K_n$ is again stabilized $m$ times this is denoted by $K_{n,m}$. 

\begin{lemma}[Ding--Geiges~\cite{Ding_Geiges_torus_bundles,Ding_Geiges_Surgery}]\label{lem:Kirby}
Let $K$ be a Legendrian knot in $(S^3,\xist)$.
\begin{enumerate}
    \item \textbf{Cancellation lemma:} For every $n\in\Z\setminus\{0\}$, we have
    \begin{equation*}
	K\left(\frac{1}{n}\right){\def\svgwidth{1,6ex}\,\,\,\,} \,K\left(-\frac{1}{n}\right)\cong(S^3,\xist).
	\end{equation*}
    \item \textbf{Replacement lemma:} For every $n\in\Z\setminus\{0\}$, we have
    \begin{equation*}
	K\left(\pm\frac{1}{n}\right)\cong K(\pm1){\def\svgwidth{1,6ex}\,\,\,\,}\cdots{\def\svgwidth{1,6ex}\,\,\,\,} K(\pm1).
	\end{equation*}
    \item \textbf{Transformation lemma:} For every $r\in\Q\setminus\{0\}$ and every integer $k$, we have
    \begin{equation*}
		K(r)\cong K\left(\frac{1}{k}\right){\def\svgwidth{1,6ex}\,\,\,\,} \,K\left(\frac{1}{\frac{1}{r}-k}\right).
	\end{equation*}
 If the contact surgery coefficient $r$ is negative, one can write $r$ uniquely as 
		\begin{equation*}\label{expand}
		r=[r_1+1,r_2,\ldots,r_n]:=r_1+1-\cfrac{1}{r_2-\cfrac{1}{\dotsb -\cfrac{1}{r_n} }}
		\end{equation*}
		with integers $r_1,\ldots ,r_n\leq -2$ and we have
		\begin{equation*}
		K(r)\cong K_{|2+r_1|}(-1){\def\svgwidth{1,6ex}\,\,\,\,} K_{|2+r_1|,|2+r_2|}(-1){\def\svgwidth{1,6ex}\,\,\,\,}\cdots{\def\svgwidth{1,6ex}\,\,\,\,} K_{|2+r_1|,\ldots,|2+r_n|}(-1).
		\end{equation*}
\end{enumerate}
In addition, all these results hold true in a tubular neighborhood of $K$. In particular, they can be applied to knots in larger contact surgery diagrams.
\end{lemma}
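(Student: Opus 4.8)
The plan, following Ding--Geiges, is to reduce all three statements to local computations inside a standard tubular neighborhood $N(K)\cong S^1\times D^2$ of $K$, whose boundary is a convex torus with two dividing curves isotopic to the contact longitude. This disposes of the final addendum at once: since nothing outside $N(K)$ enters the argument, the lemma applies verbatim to a Legendrian knot sitting in any contact manifold, in particular inside a larger surgery diagram. I would first recall the picture of contact $r$-surgery: remove $N(K)$ and glue back a solid torus $V_r$ whose meridian has slope $r$ measured against the contact longitude, carrying one of the finitely many tight contact structures with convex boundary matching $\partial N(K)$; by Honda's classification of tight contact structures on solid tori~\cite{Honda_lens} these correspond to the negative continued fraction expansions of $r$, and the filling is unique when $r=1/n$ (as recalled in Section~\ref{sec:prelim}). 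A Legendrian push-off $K'$ lives on a convex torus parallel to $\partial N(K)$, and after the surgery $K'$ bounds a further nested solid torus inside $V_r$; the arithmetic of slopes under this nesting is governed by the Farey tessellation.

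With this set-up the Transformation lemma becomes the contact refinement of the classical Rolfsen twist. Smoothly, $V_r$ decomposes along a convex torus into a collar realizing meridian slope $1/k$ and a complementary solid torus glued along its core (which is isotopic to $K'$) with slope $\tfrac1{1/r-k}$ --- this is exactly the continued-fraction identity relating $r$, $1/k$, and $1/(1/r-k)$ in the Farey graph. On the contact side the collar carries the unique tight structure of a contact $1/k$-surgery, and Honda's gluing theorem identifies the remaining tight structures on the inner torus bijectively, and compatibly with the ambient contact structure, with the tight fillings available for a contact $\tfrac1{1/r-k}$-surgery along $K'$; this yields the first displayed identity of~(3). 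I would then iterate, starting from the negative continued fraction $r=[r_1+1,r_2,\dots,r_n]$ and peeling off one convergent at a time: at the $j$-th stage the convergent forces the push-off to be stabilized $|2+r_j|$ times so that its contact framing matches the next slope, producing a contact $(-1)$-surgery along $K_{|2+r_1|,\dots,|2+r_j|}$, which gives the second displayed identity of~(3).

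The Replacement lemma is then the special case $k=\pm1$: the Transformation lemma gives $K(\pm1/n)\cong K(\pm1)\Pushoff K(\pm1/(n-1))$, and induction on $n$ finishes it. For the Cancellation lemma I would check that $K(1/n)$ followed by $K(-1/n)$ along a push-off is a topologically cancelling pair --- gluing $V_{1/n}$ and then gluing $V_{-1/n}$ along a core parallel to $K$ undoes the first gluing and restores $N(K)$ --- and since both surgery coefficients have the form $1/n$, the glued-in tight contact structures are unique, hence forced to be exactly the ones reassembling $N(K)$ with its standard contact structure; globally this gives $(S^3,\xist)$.

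I expect the main obstacle to be the bookkeeping of tight contact structures under the gluing in the Transformation lemma: one must verify that the correspondence between the tight fillings of $V_r$ and the pairs (unique filling of the $1/k$-collar, tight filling of the inner torus) is an honest bijection --- no tight structure is created or destroyed --- and that it is compatible with the contact structure already present on the ambient manifold. This is where the Farey-tessellation and continued-fraction combinatorics together with Honda's gluing and classification results carry the load; the smooth Rolfsen-twist manipulation and the stabilization count are comparatively routine once the slopes are tracked with care.
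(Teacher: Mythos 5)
The paper offers no proof of this lemma; it is imported verbatim from Ding--Geiges, so there is nothing in-paper to compare your argument against. Your sketch reproduces the standard argument from those sources --- decompose the glued-up solid torus along convex tori, use the Giroux--Honda classification of tight contact structures on solid tori together with the Farey/continued-fraction bookkeeping of boundary slopes, and invoke uniqueness of the tight filling for reciprocal-integer coefficients to force the cancellation --- and it is correct in outline, including your correct identification of the honest bijection of tight fillings in the transformation step (no tight structure created or destroyed, and the signs of the $|2+r_j|$ stabilizations parametrizing the choices) as the point that carries the real weight.
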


\subsection{The homotopical invariants}
We will also need to compute the algebraic invariants of the underlying tangential $2$-plane field of a contact structure. It is known that a tangential $2$-plane field $\xi$ on a rational homology sphere $M$ with no $2$-torsion in $H_1(M)$, is completely determined by its Euler class and its $\de_3$-invariant~\cite{Gompf_Stein,Ding_Geiges_Stipsicz}. Roughly speaking, the Euler class determines $\xi$ on the $2$-skeleton of $M$, while the $\de_3$-invariant gives $\xi$ on the $3$-cell of $M$. For contact $(\pm1/n)$-surgeries these invariants can be computed with the following result from~\cite{Durst_Kegel_rot_surgery}. 

\begin{lemma}\label{lem:d3}
	Let $L = L_1 \sqcup \ldots \sqcup L_k$ be a Legendrian link in
	$(S^3, \xist)$ and denote by $(M, \xi)$ the contact manifold obtained
	from $S^3$ by contact $(\pm {1}/{n_i})$-surgeries along $L$, with $n_i \in \N$. After choosing an orientation on $L$ we write $t_i$, $r_i$ for the Thurston--Bennequin invariant and the rotation number of $L_i$, and $l_{ij}$ for the linking number of $L_i$ and $L_j$. We denote the topological surgery coefficient of $L_i$ by $p_i/q_i=\pm1/n_i+ t_i$ and define the generalized linking matrix as
	\begin{align*}
	Q:=\begin{pmatrix}
	p_1&q_2 l_{12} &\cdots&q_n l_{1n}\\
	q_1 l_{21} & p_2&&\\
	\vdots&&\ddots\\
	q_1 l_{n1}&&& p_n
	\end{pmatrix}.
	\end{align*}	
	
	\begin{itemize}
		\item [(1)] Then $Q$ is a presentation matrix for the first homology group
		$H_1(M)$ of $M$, i.e. $H_1(M)$ is generated by the meridians $\mu_i$
		and the relations are 
		$Q{\mathbf\mu}=0$ where $\mathbf\mu$ is the vector with entries $\mu_i$. 
		\item [(2)] If all $n_i=1$, the Euler class $\textrm{e}(\xi)$ is Poincar\'{e} dual to 
		\begin{equation*}
		\operatorname{PD}\big(\textrm{e}(\xi)\big)=\sum_{i=1}^k r_i\mu_i\in H_1(M). 
		\end{equation*}
		
		\item [(3)] The Euler class $\textrm{e}(\xi)$ is torsion if and only if there exists
		a rational solution $\mathbf b\in\Q^k$ of $Q\mathbf b=\mathbf{r}$, where $\mathbf{r}$ is the vector of the rotation numbers $r_i$.
		In this case, the $\de_3$-invariant is a well-defined rational number and computes as
		\begin{equation*}
		\de_3(M,\xi) = \frac{1}{4} \left(\sum_{i=1}^k n_i b_i r_i  +  (3-n_i)\operatorname{sign}_i\right)   - \frac{3}{4} \sigma (Q) 
		\end{equation*}
		where $\operatorname{sign}_i$ denotes the sign of the contact surgery coefficient of $L_i$ and $\sigma(Q)$ denotes the signature of $Q$. (In the proof of Theorem~5.1. in~\cite{Durst_Kegel_rot_surgery} it is shown that the eigenvalues of $Q$ are all real and thus the signature of $Q$ is well-defined although $Q$ is in general a non-symmetric matrix.)
	\end{itemize}
\end{lemma}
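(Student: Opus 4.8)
The strategy is to reduce everything to the case of contact $(\pm 1)$-surgeries, where the statements are either classical or already available, and then propagate the formula to $(\pm 1/n)$-coefficients via the Replacement Lemma (Lemma~\ref{lem:Kirby}(2)). First I would handle part~(1): the generalized linking matrix $Q$ is, up to reordering rows, the linking matrix of the topologically equivalent surgery diagram with rational coefficients $p_i/q_i$ attached along the underlying smooth link; one invokes the standard Dehn surgery presentation of $H_1$ of a surgered manifold (see e.g.~\cite{Gompf_Stein}), taking care that the asymmetry of $Q$ — column $i$ is scaled by $q_i$ — is precisely what one gets when one clears denominators in the rational surgery relations, so $Q\mathbf\mu = 0$ is the correct relation set. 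I would note that this part is purely topological and does not use contact geometry at all.

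Next, for part~(2), I would specialize to $n_i = 1$ for all $i$. Here $p_i/q_i = t_i \pm 1$ with $q_i = 1$, so $Q$ is the ordinary (symmetric) linking matrix of a surgery along the Legendrian link, and the contact structure is obtained by Legendrian surgery / contact $(+1)$-surgery on the components. The claim $\PD(\e(\xi)) = \sum r_i\mu_i$ is the statement that the Euler class of the contact structure built by attaching $2$-handles to $B^4$ along the Legendrian link evaluates on the core of the $i$-th handle as the rotation number $r_i$; this is Gompf's computation of $c_1$ of the Stein/almost-complex structure in~\cite{Gompf_Stein} in the Legendrian-surgery case, extended to contact $(+1)$-surgeries by the symplectic-cobordism description in~\cite{Ding_Geiges_Stipsicz}. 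So part~(2) is essentially a citation plus a bookkeeping check that the rotation-number vector $\mathbf r$ represents $\PD(c_1)$ in the handle basis dual to the $\mu_i$.

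For part~(3), the torsion criterion is immediate from~(1): $\e(\xi)$ is torsion in $H_1(M)$ iff $\PD(\e(\xi))$ lies in the image of $Q$ over $\Q$ applied to the rotation vector, i.e.\ iff $Q\mathbf b = \mathbf r$ has a rational solution. The substantive content is the $\de_3$-formula. I would derive it exactly as in~\cite[Theorem~5.1]{Durst_Kegel_rot_surgery}: build a $4$-manifold $X$ with $\partial X = M$ by attaching handles according to the (generalized) surgery diagram, so that $\sigma(X) = \sigma(Q)$ and $\chi(X) = 1 + k$; choose an almost-complex structure on $X$ restricting to $\xi$ on the boundary, compute $\langle c_1(X)^2, [X,\partial X]\rangle$ using the rational solution $\mathbf b$ of $Q\mathbf b = \mathbf r$ (this is where $\sum b_i r_i$ enters), and feed this into the formula $\de_3(\xi) = \tfrac14\big(c_1(X)^2 - 3\sigma(X) - 2\chi(X)\big) + q$, where $q$ counts a correction for each $(+1)$-surgery versus $(-1)$-surgery handle — this is the origin of the $(3 - n_i)\sign_i$ term once the $n_i$-fold push-off replacement is accounted for. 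To get the $n_i \neq 1$ case, replace each $(\pm 1/n_i)$-surgery curve by $n_i$ parallel $(\pm 1)$-surgery push-offs (Lemma~\ref{lem:Kirby}(2)); the new generalized linking matrix is block-expanded, but one checks its signature contribution and the quadratic form value combine to give $n_i b_i r_i$ and $(3 - n_i)\sign_i$ after collapsing the parallel copies, and that $\sigma(Q)$ of the collapsed matrix equals that of the expanded one. The main obstacle is exactly this last reconciliation: verifying that the signature and the $c_1^2$-evaluation behave correctly under the push-off expansion for a \emph{non-symmetric} $Q$ — one must use the fact, recalled in the statement, that $Q$ has real spectrum so that $\sigma(Q)$ is well-defined, and track carefully how the rational solution $\mathbf b$ transforms when a single curve is split into $n_i$ linked parallel copies.
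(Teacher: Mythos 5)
This lemma is quoted in the paper verbatim from~\cite{Durst_Kegel_rot_surgery} and is not proved there, so there is no in-paper argument to compare against. Your outline follows the same route as that reference's proof --- the Dehn-surgery presentation of $H_1$ for part~(1), Gompf's computation of $c_1$ for Legendrian handle attachments for part~(2), and for part~(3) the Ding--Geiges--Stipsicz formula $\de_3=\frac14\left(c_1^2-3\sigma-2\chi\right)+q$ (shifted by the normalization $\de_3(S^3,\xist)=0$) combined with the push-off expansion of the $(\pm1/n_i)$-curves --- and it correctly isolates the one substantive step, namely tracking $c_1^2$, $\sigma$, and the solution $\mathbf b$ through the block expansion of the non-symmetric matrix $Q$, which is precisely where the work in the cited proof lies.
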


Next, we present a formula and an algorithm how to compute the Euler class of a contact structure from a general rational contact surgery diagram. Here the main idea is to use the transformation lemma to transform a general contact surgery diagram into one with only $(\pm1)$-contact surgery coefficients. We will perform that process by doing explicit Kirby moves  which will allow us to keep track of the meridians. Then we can use Lemma~\ref{lem:d3} to compute the Euler class.

\begin{theorem}\label{thm:Euler_algorithm}
   Let $(M,\xi)$ be a contact manifold that is obtained by rational contact surgery on an oriented Legendrian link $L$ in $(S^3,\xist)$. Let $K$ be a component of $L$ with contact surgery coefficient $r\in\Q\setminus\{0\}$. 
   Then the Euler class of $\xi$ can be represented in the basis of $H_1(M)$ given by the meridians of $L$ by
   \begin{equation*}
       \e(\xi)=e_K\mu + e_L,
   \end{equation*}
   where $e_K$ is an integer, $\mu$ is the meridian of $K$, and $e_L$ is a linear combination of the meridians of $L\setminus K$.
   
   On the other hand, the transformation lemma tells us that there exist integers $l,m,s_{l+1},\ldots,s_{m}$ such that
   \begin{equation}
\label{eq:pushoff}
K(r)\cong\underbrace{K(+1){\def\svgwidth{1,6ex}\,\,\,\,}\cdots K(+1)}_l{\def\svgwidth{1,6ex}\,\,\,\,} K_{s_{l+1}}(-1){\def\svgwidth{1,6ex}\,\,\,\,}\cdots {\def\svgwidth{1,6ex}\,\,\,\,} K_{s_{l+1},\cdots ,s_m}(-1).
\end{equation}
We denote the components of the above surgery description as $K^i$ for $i=1,2,\cdots, m$ and write $t$ and $r$ for the Thurston--Bennequin invariant and rotation number of $K$, and $r_i$ for the rotation number and $\mu_i$ for the meridian of $K^i$.

We have the following cases:
\begin{itemize}
\item[(1)] For $m=2$, the surgery diagram reduces to $K(r)=K(\pm 1){\def\svgwidth{1,6ex}\,\,\,\,} K_{s_2}(-1).$ In the basis of $H_1(M)$ given by this surgery description the Euler class is given by 
{\small\begin{align*}
    \e(\xi)=\big(\mp t(r_1-r_2)+r_2\big)\mu +\e_L
\end{align*}
}
\item[(2)] For $m=l$, i.e for contact $(\frac{1}{m})$-surgery and for $l=0$ and $s_{i}=0$ for $i=3,\cdots m$, i.e for contact $(-\frac{1}{m})$ surgery the Euler classes are given by {\small\begin{align*}
    \e(\xi)&=r\mu +\e_L
\end{align*}
}

\item[(3)] For $m>2$ and $l=0$, in the basis of $H_1(M)$ given by the surgery description  from Equation~\ref{eq:pushoff} the Euler class is given by
{\tiny
\begin{align*}
    \e(\xi)=\left(rt+(1-t)\left[r_m\prod_{k=2}^{m-1} (1-t+s_k)+r_2(t-s_2)+\sum_{i=3}^{m-1} r_i(t-s_i)\prod_{k=2}^{i-1}(1-t+s_k)\right]\right)\mu+e_L
\end{align*}
} This case corresponds to negative contact surgery.
\item[(4)] For $m>2$, $l>0$ and $l\neq m$, in the basis of $H_1(M)$ given by the surgery description  from Equation~\ref{eq:pushoff} the Euler class is given by
{\tiny
\begin{align*}
    \e(\xi)=\left(r+(1+t)^l\left[-r+r_m\prod_{k=l+1}^{m-1} (1-t+s_k)+r_{l+1}(t-s_2)+\sum_{i=l+2}^{m-1} r_i(t-s_i)\prod_{k=l+1}^{i-1}(1-t+s_k)\right]\right)\mu+e_L
 \end{align*}
}
\end{itemize}
\end{theorem}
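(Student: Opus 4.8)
The plan is to reduce the general rational contact surgery to a link of $(\pm1)$-surgeries via the transformation lemma (Lemma~\ref{lem:Kirby}(3)), performing the transformation as an explicit sequence of Rolfsen/Kirby moves so that each new meridian is expressed in terms of the old ones, and then to feed the resulting integer surgery diagram into Lemma~\ref{lem:d3}(2). Concretely, writing the negative continued fraction expansion of $r$ (or handling the $1/n$ case directly), one obtains the chain of stabilized push-offs $K^1,\dots,K^m$ in Equation~\eqref{eq:pushoff}. The components $K^i$ with $i>l$ are mutually linked like a chain (each $K^i$ links $K^{i-1}$ and $K^{i+1}$ with linking number determined by $t$ and the stabilization numbers $s_i$), and the positive ones $K^1,\dots,K^l$ are all push-offs of $K$, hence pairwise linked with linking number $t$. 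The upshot is that in the new basis the class $\operatorname{PD}(\e(\xi))=\sum_{i}r_i\mu_i$ (from Lemma~\ref{lem:d3}(2)) must be rewritten in terms of the single old meridian $\mu$ of $K$ (the meridians of $L\setminus K$ are untouched, so they only contribute the fixed $\e_L$ term). The coefficient $e_K$ is then a weighted sum $\sum_i r_i(\text{coefficient of }\mu\text{ in }\mu_i)$, and the whole theorem amounts to computing those coefficients.

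First I would set up the change of basis carefully. When one does a single transformation step $K(r')\cong K(1/k)\Pushoff K(1/(\tfrac{1}{r'}-k))$, the new push-off component is isotopic in $S^3$ to a curve that, after the surgery on the first component, becomes a meridian-type curve; tracking the meridians through this move gives a recursion. Iterating the Replacement lemma to break $1/n$-surgeries into $(\pm1)$-surgeries introduces further push-offs, each contributing a copy of the ambient meridian. I would organize this as a recursion on the length $m$ of the chain: let $\nu_i$ denote the coefficient of $\mu$ when $\mu_i$ is expressed in the old basis. The chain structure of the linking (a ``linear plumbing'' of the $K^i$) yields a linear recurrence for $\nu_i$ with coefficients $1-t+s_i$ in the negative part and $1+t$ in the positive part; solving the recurrence produces the telescoping products $\prod_{k}(1-t+s_k)$ visible in cases (3) and (4). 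The four cases in the statement are exactly the degenerate/base cases ($m=2$), the pure $1/n$ cases where no stabilizations occur ($m=l$ or all $s_i=0$ with $l=0$), the purely negative case ($l=0$, $m>2$), and the mixed case ($0<l<m$); each is obtained by specializing the recursion.

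For the concrete formulas: in case (1) with $K(r)=K(\pm1)\Pushoff K_{s_2}(-1)$, the linking number of the two components equals $t$ (push-off) adjusted by the stabilizations, and the contact longitude bookkeeping gives $\mu_1 = \mu - (\text{something})\mu_2$ etc.; plugging $r_1,r_2$ and the linking data into $\operatorname{PD}(\e)=r_1\mu_1+r_2\mu_2$ and collecting the $\mu$-coefficient yields $\mp t(r_1-r_2)+r_2$. Case (2) is immediate from Lemma~\ref{lem:d3}: for $1/m$-surgery along a single $K$ there is one meridian and $e_K=r$ directly (since the transformation leaves one meridian generating the relevant summand). Cases (3) and (4) are the solution of the recurrence written out; I would verify the formula by induction on $m$, checking that appending one more stabilized push-off multiplies the running product by $(1-t+s_m)$ and shifts the index sums appropriately, and that the $l=1$ instance of case (4) reproduces case (1) with the ``$+1$'' sign.

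The main obstacle I expect is the bookkeeping of meridians under the Kirby/Rolfsen moves that realize the transformation lemma—making sure that ``$\mu_i$ expressed in the old basis'' is tracked correctly when a push-off is slid over a component with a nontrivial surgery coefficient, and that the linking numbers $l_{ij}$ of the chain are computed with the correct signs (these depend on how the stabilizations $s_i$ affect $\tb$ and hence the topological surgery coefficients entering the generalized linking matrix $Q$). A secondary subtlety is that $Q$ is non-symmetric; but since we only need the homology class $\sum r_i\mu_i$ rewritten in a chosen basis—not the signature—this does not cause trouble here, and the computation is purely linear-algebraic once the move-by-move meridian recursion is pinned down. Finally I would double-check the degenerate overlaps between the cases (e.g. $m=2$ appearing both in case (1) and as a boundary of case (4) when $l=1$) to confirm the formulas agree, which also serves as a useful consistency check on the signs.
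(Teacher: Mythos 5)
Your proposal is correct and follows essentially the same route as the paper: the paper likewise realizes the transformation lemma by explicit Kirby moves, tracks the meridians $\mu_i$ through these moves by an induction on the chain length (Lemmas~\ref{lem:meridian} and~\ref{lem:meridian_neg}, which produce exactly the telescoping coefficients $(1+t)^{k-1}$ and $\prod_k(1-t+s_k)$ you describe), and then substitutes into $\operatorname{PD}(\e(\xi))=\sum_i r_i\mu_i$ from Lemma~\ref{lem:d3}. The only minor slip is that the linking number of $K$ with its stabilized push-off is $t=\tb(K)$ itself (the stabilizations change the push-off's own Thurston--Bennequin invariant and hence its topological surgery coefficient, not its linking with $K$), but this does not affect the validity of your argument.
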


The main ingredients in the proof of the above theorem are the following two lemmas. 

    \begin{lemma}
    \label{lem:meridian}
        Under the diffeomorphism from Equation~\ref{eq:pushoff}, for $m>2$ and $m\neq l$  the homology classes of the meridians $\mu_i$  in $H_1(M)$ are mapped as follows:
        \begin{align*}
         \mu_k&\longmapsto -t(1+t)^{k-1}\mu\,\textrm{ for }\,k=1,2,\cdots, l\\
        \mu_{l+1}&\longmapsto (t-s_{l+1})(1+t)^{l}\mu\\
        \mu_{l+2}&\longmapsto (t-s_{l+2})(1-t+s_{l+1})(1+t)^{l}\mu\\
        &\vdots\\
        \mu_{m-1}&\longmapsto (t-s_{m-1})(1-t+s_{m-2})\cdots (1-t+s_{l+1})(1+t)^l \mu\\
        \mu_m&\longmapsto (1-t+s_{m-1})(1-t+s_{m-2})\cdots (1-t+s_{l+1})(1+t)^l\mu
        \end{align*}   
      For $m=2$, the meridians are mapped as follows:
       \begin{align*}
           \mu_1&\longmapsto -t\mu\\
           \mu_2&\longmapsto(1+ t)\mu
       \end{align*}
     For $m=l$, we have the following:
       \begin{align*}
           \mu_k&\longmapsto -t(1+t)^{k-1}\mu\,\textrm{ For }\,k=1,2,\cdots, m\\
           \mu_m&\longmapsto(1+ t)^{m-1}\mu
       \end{align*}
     \end{lemma}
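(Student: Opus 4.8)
The plan is to prove Lemma~\ref{lem:meridian} by realising the diffeomorphism of Equation~\ref{eq:pushoff} as an explicit finite sequence of Kirby moves — the very moves that underlie the Transformation Lemma, Lemma~\ref{lem:Kirby} — and by tracking, step by step, what each move does to the homology classes of the meridians. We start from the single surgery curve $K$ with coefficient $r$, whose meridian we call $\mu$. Since replacing $K$ by the chain in Equation~\ref{eq:pushoff} leaves $M$ and the rest of $L$ unchanged, the meridians of $L\setminus K$ are untouched throughout, and the content of the lemma is exactly that, in $H_1(M)$, each chain meridian $\mu_i$ becomes a specific integer multiple of $\mu$. The organising principle is Lemma~\ref{lem:d3}(1): at every stage of the calculus $H_1$ is presented by the meridians of the components modulo the generalized linking matrix, so identifying the two bases of $H_1(M)$ in the statement just amounts to reading off the change of coordinates forced by the moves.

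I would set up the bookkeeping as follows. Running the reduction of $r$ through the Transformation Lemma, one repeatedly performs one of a handful of local moves — splitting off a Legendrian push-off (as in the Transformation Lemma), a Rolfsen twist (blow-down of a $\pm1$-framed unknot), a handle slide — each supported near the clasp created between two consecutive components of the emerging chain. A meridian of a component is unchanged by a handle slide over that component and by any move taking place off a neighbourhood of it; so at each step only the meridians of the two components bounding the new clasp are affected, and the change is governed entirely by the $2\times 2$ block of the generalized linking matrix relating those two components, i.e.\ by the Thurston--Bennequin number $t$ of $K$, the number $s_k$ of stabilizations carried by the relevant copy, and the sign of its contact surgery coefficient. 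This turns the assertion into a linear recursion for the classes $\mu_k\in H_1(M)$: each clasp in the $(-1)$-part of the chain multiplies the running meridian by a factor built from $(1-t+s_k)$ and $(t-s_k)$, while each of the $l$ push-offs in the $(+1)$-part contributes a factor built from $(1+t)$. Collecting the resulting telescoping products over the two segments of the chain yields precisely the displayed formulas, with the cases $m=2$ (a single $(-1)$-clasp) and $m=l$ (no $(-1)$-part at all) appearing as the shortest instances.

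I would carry out the recursion by induction on the length of the continued-fraction expansion of the negative part of $r$, peeling off one push-off at a time, and then confirm the closed forms by a direct consistency check: substitute the candidate classes $\mu_k=c_k\mu$ into every row of the generalized linking matrix $Q$ of the chain in Equation~\ref{eq:pushoff} and verify that each relation reduces to a multiple of the single relation presenting $H_1$ of the surgered manifold, which pins down the overall normalisation and catches arithmetic slips. The main obstacle is purely the bookkeeping: a Rolfsen twist simultaneously changes framings, several pairwise linking numbers, and several meridians, and the continued-fraction expansion nests many such moves; the genuinely delicate point is the seam between the $(+1)$-segment (governed by the Replacement Lemma part of Lemma~\ref{lem:Kirby}) and the stabilized $(-1)$-segment (governed by the continued-fraction part), where the index passes from $l$ to $l+1$ and the stabilization counts must be threaded through the products in the right order. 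For that reason I would keep the inductive meridian recursion as the primary argument and the linking-matrix substitution as an independent safeguard, rather than trying to drag a single global Kirby diagram through every move at once.
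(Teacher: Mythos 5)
Your proposal is correct and follows essentially the same route as the paper: realising the diffeomorphism of Equation~\ref{eq:pushoff} by explicit Kirby moves, tracking the meridians through an induction that peels off one component of the chain at a time, and treating the $(+1)$-segment and the stabilized $(-1)$-segment separately before splicing them at the seam (the paper's Cases 1, 2, and 3, with $m=2$ as the degenerate instance). The only addition is your consistency check via the generalized linking matrix presentation of $H_1(M)$, which the paper omits but which is a reasonable safeguard rather than a different argument.
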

\begin{proof} We will consider several cases of increasing difficulty.

\noindent
\textbf{Case 1:} First we consider the case that $r=1/m$ for a positive integer $m>2$. This is equivalent of saying that $l=m$. We will show by induction on $m$ that
\begin{align*}
    \mu_k&\longmapsto-t(1+t)^{k-1}\mu\, \textrm{ for all } k=1,2,\ldots, m-1,\\
    \mu_m&\longmapsto(1+t)^{m-1}\mu.
\end{align*}

The induction step follows from the Kirby moves in Figure~\ref{fig:kirbymove1}. We start in $(1)$ with the Kirby diagram 
$$\underbrace{K(1){\def\svgwidth{1,6ex}\,\,\,\,}\cdots {\def\svgwidth{1,6ex}\,\,\,\,} K(1)}_{m-1}{\def\svgwidth{1,6ex}\,\,\,\,} K(1)$$ and their meridians $\mu_i$ and end up with their images in 
$$\underbrace{K(1){\def\svgwidth{1,6ex}\,\,\,\,}\cdots {\def\svgwidth{1,6ex}\,\,\,\,} K(1)}_{m-2}{\def\svgwidth{1,6ex}\,\,\,\,} K(1/2)$$ under the Kirby moves in diagram $(6)$ from which we read-off that
\begin{align*}
\mu_i&\longmapsto\mu'_i\,\textrm{ for } i=1,\ldots m_2,\\
\mu_{m-1}&\longmapsto-t\mu'_{m-1},\\
 \mu_{m}&\longmapsto(1+t)\mu'_{m-1},
\end{align*}
where we write $\mu'_i$ for the $i$-th knot in the surgery diagram $(6)$. The statement then follows from the induction hypothesis.

\begin{figure}[htbp] 
\centering
\def\svgwidth{0.99\columnwidth}
\labellist
\small\hair 2pt
  \pinlabel \textcolor{blue}{$\mu_m$} at 180 720
  \pinlabel \textcolor{red}{$\mu_{m-1}$} at 180 645
  \pinlabel $1+t$ at 250 700
  \pinlabel $1+t$ at 250 630
  \pinlabel $(1)$ at 180 570
  \pinlabel \textcolor{blue}{$\mu_m$} at 580 720
  \pinlabel \textcolor{red}{$\mu_{m-1}$} at 650 637
  \pinlabel $2+2t-2t$ at 690 700
  \pinlabel $1+t$ at 710 600
  \pinlabel $(2)$ at 620 570
  \pinlabel \textcolor{blue}{$\mu_m$} at 180 540
  \pinlabel \textcolor{red}{$\mu_{m-1}$} at 180 440
  \pinlabel $2$ at  280 440
  \pinlabel $1+t$ at 310 380
  \pinlabel $(3)$ at 180 300
  \pinlabel $(4)$ at 620 300
  \pinlabel \textcolor{blue}{$\mu_m$} at 580 530
  \pinlabel \textcolor{red}{$\mu_{m-1}$} at 650 340
  \pinlabel $1+t$ at 710 380
  \pinlabel $2$ at 610 430
  \pinlabel $(5)$ at 180 20
  \pinlabel $1+t$ at 302 120
  \pinlabel \textcolor{blue}{$\mu_m$} at 200 250
  \pinlabel \textcolor{red}{$\mu_{m-1}$} at 270 70
  \pinlabel $(6)$  at 620 20
  \pinlabel $1+t$ at 650 120
  \pinlabel \textcolor{red}{$\mu_{m-1}$} at 730 160
  \pinlabel \textcolor{blue}{$\mu_m$} at 560 140
  \tiny\pinlabel 1+$t$ at 633 605
  
  \endlabellist
\includegraphics[scale=0.4]{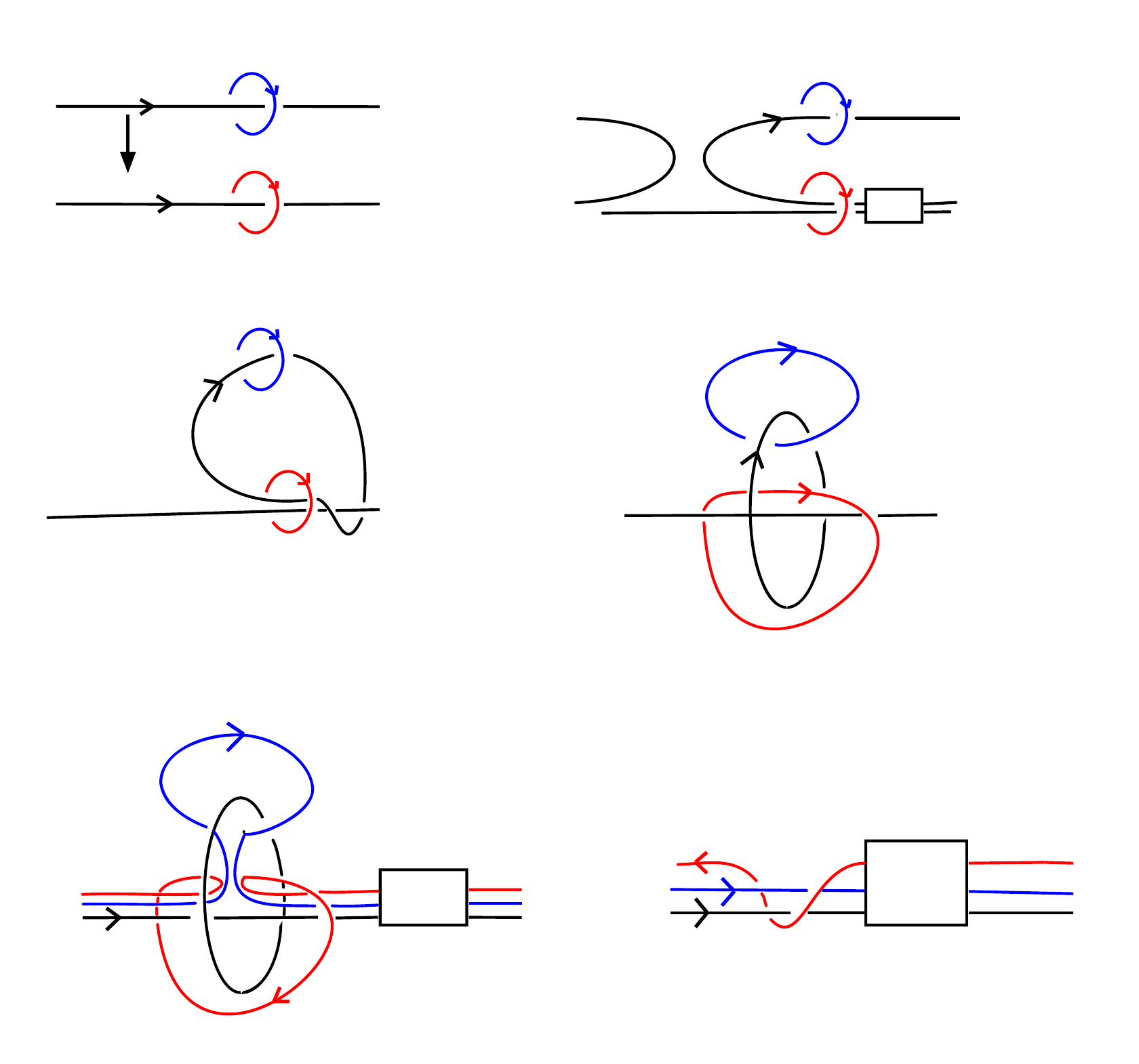}
\caption{Case 1: While performing the explicit Kirby moves from diagram $(1)$ to diagram $(6)$ we keep track of the meridians. Note that in this figure all surgery coefficients are measured with respect to the Seifert framing. }
\label{fig:kirbymove1}
\end{figure}

\noindent
\textbf{Case 2:} Next, we consider a surgery diagram where $K^1$ has contact surgery coefficient $ 1$ and all other $K^i$'s have contact surgery coefficient $-1$. 
We will show by induction that
\begin{align*}
\mu_1&\longmapsto- t\mu\\
\mu_2&\longmapsto(t-s_2)(1+t)\mu\\
\mu_3&\longmapsto(t-s_3)(1-t+s_2)(1+ t)\mu\\
&\vdots\\
\mu_{m-1}&\longmapsto(t-s_{m-1})(1-t+s_{m-2})(1-t+s_{m-3})\cdots(1+t)\mu\\
\mu_m&\longmapsto(1-t+s_{m-1})(1-t+s_{m-2})(1-t+s_{m-3})\cdots(1+t)\mu.
\end{align*}

The induction step (for $m\geq3$) follows also along the same lines as Case~$1$ from the Kirby moves depicted in Figure~\ref{fig:kirby2}. From Figure~\ref{fig:kirby2} we read-off that
\begin{align*}
\mu_i&\longmapsto\mu'_i\,\textrm{ for } i=1,\ldots m-2,\\
\mu_{m-1}&\longmapsto(t-s_{m-1})\mu'_{m-1},\\
 \mu_{m}&\longmapsto(1-t+s_{m-1})\mu'_{m-1}, 
\end{align*}
where we write $\mu'_i$ for the $i$-th knot in the surgery diagram $(6)$. The statement then follows from the induction hypothesis.

\begin{figure}[htbp] 
\centering
\def\svgwidth{0.99\columnwidth}
 \labellist
 	\small\hair 2pt
  \pinlabel \textcolor{blue}{$\mu_m$} at 150 710
  \pinlabel \textcolor{red}{$\mu_{m-1}$} at 150 640 
 \pinlabel $k-s_m$ at 275 700
 \pinlabel $k$ at 255 630
  \pinlabel $(1)$ at 150 560
  \pinlabel \textcolor{blue}{$\mu_m$} at 620 720
  \pinlabel \textcolor{red}{$\mu_{m-1}$} at 660 650
  \pinlabel $-2-s_m$ at 700 700
 \pinlabel $(2)$ at 600 560  
 \pinlabel \textcolor{blue}{$\mu_m$} at 135 510
 \pinlabel \textcolor{red}{$\mu_{m-1}$} at 170 440
 \pinlabel $(3)$ at 160 310
 \pinlabel $-2-s_m$ at 300 450
 \pinlabel $(4)$ at 600 300
 \pinlabel \textcolor{blue}{$\mu_m$} at 520 490
 \pinlabel\textcolor{red}{$\mu_{m-1}$} at 460 330
 \pinlabel $-2-s_m$ at 620 420
 \pinlabel $(5)$ at 160 10
 \pinlabel\textcolor{blue}{$\mu_m$} at 110 220
 \pinlabel \textcolor{red}{$\mu_{m-1}$} at 115 80
 \pinlabel $-2-s_m$ at 270 190
 \pinlabel $k$ at 325 150
 \pinlabel $(6)$ at 600 10
 \pinlabel \textcolor{blue}{$\mu_{m}$} at 530 160
  \pinlabel $k$ at 654 150
  \pinlabel \textcolor{red}{$\mu_{m-1}$} at  510 90
\tiny\pinlabel -$s_{m-1}$ at 49 647
  \pinlabel -$s_{m-1}$ at 410 647
 \pinlabel $-1+t$ at  657 610
  \endlabellist
\includegraphics[scale=0.4]{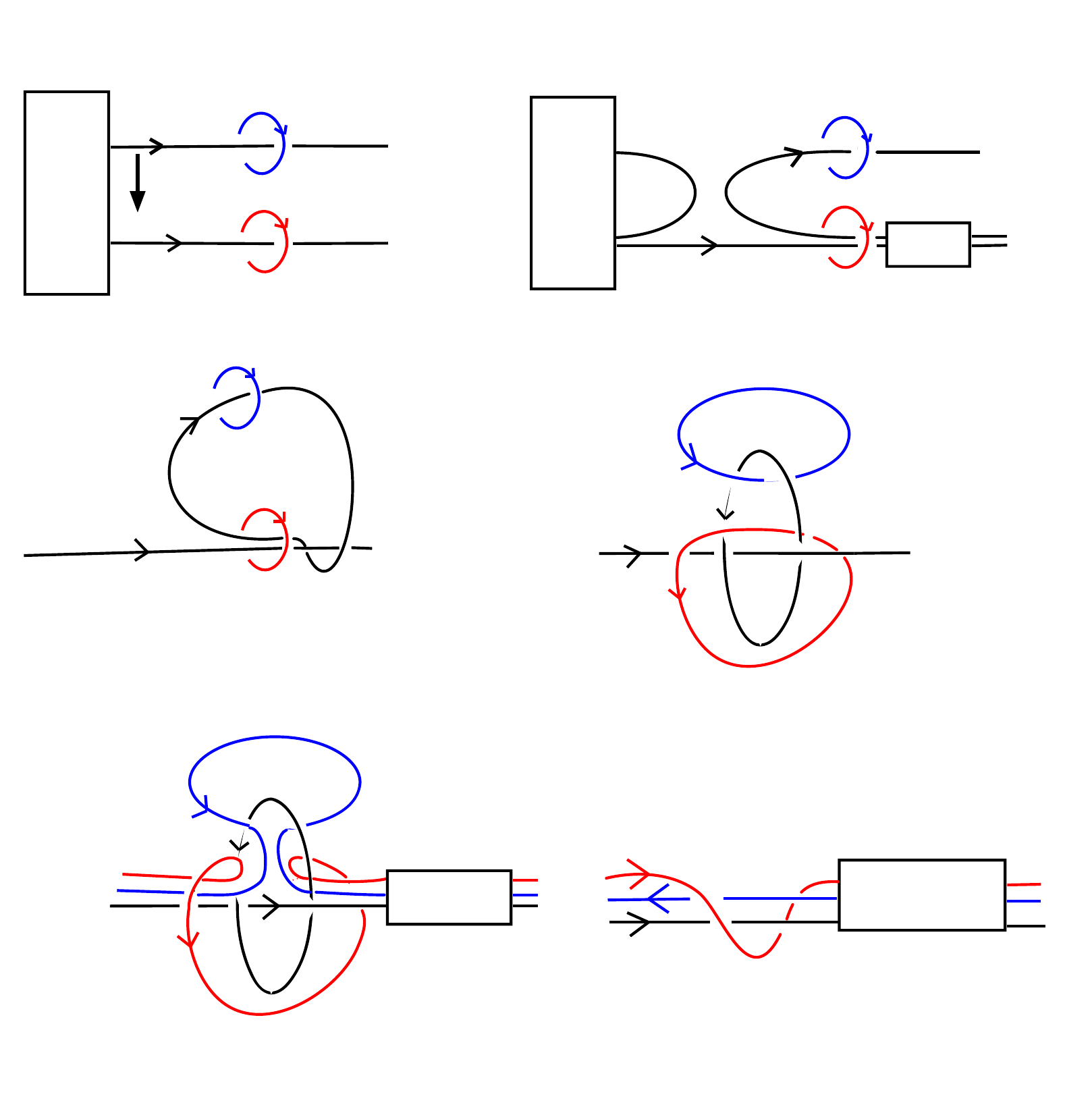}
\caption{In Case 2, performing the explicit Kirby moves from diagram $(1)$ to diagram $(6)$ we keep track of the meridians. All surgery coefficients are measured with respect to the Seifert framing. (1) A local picture with components $K_m$ and $K_{m-1}$. Here $k=-1+t-s_{m-1}$ (2) Slide $K_m$ over $K_{m-1}$. (3) and (4) Isotopy. (5) Slide $\mu_m$ and $\mu_{m-1}$ over $K_{m-1}$. (6) Slam-dunk $K_m$.}
\label{fig:kirby2}
\end{figure}

\noindent
\textbf{Case 3:} The general case is smoothly depicted in Figure~\ref{fig:general_surgery}. First, we just look at the last $(m-l+1)$ components starting from $K^l$. Notice that this coincides with Case~$2$. And thus we get
\begin{align*}
\mu_l&\longmapsto- t\mu_l',\\
\mu_{l+1}&\longmapsto(t-s_{l+1})(1+ t)\mu_l',\\
\mu_{l+2}&\longmapsto(t-s_{l+2})(1-t+s_{l+1})(1+ t)\mu_l',\\
&\vdots\\
\mu_{m-1}&\longmapsto(t-s_{m-1})(1-t+s_{m-2})(1-t+s_{m-3})\cdots(1-t+s_{l+1})(1+ t)\mu_l',\\
\mu_m&\longmapsto(1-t+s_{m-1})(1-t+s_{m-2})(1-t+s_{m-3})\cdots(1-t+s_{l+1})(1+ t)\mu_l'.
\end{align*}

\begin{figure}
 \labellist
 	\small\hair 2pt
  \pinlabel $-s_2$ at 155 220
 	\pinlabel $-s_1$ at  100 210
  \pinlabel $t$ at 55 150
\tiny\pinlabel $-s_m$ at 362 275
 	 \pinlabel $-s_{m-1}$ at 300 267
 	\pinlabel $-1-t-s_1\cdots-s_{m-1}$ at 492 250
        \pinlabel $\vdots$ at 130 100
         \pinlabel $\vdots$ at 210 100
         \pinlabel $\pm 1+t$ at 470 30
           \pinlabel $\vdots$ at 470 70
         \pinlabel $\vdots$ at 305 100
          \pinlabel $\vdots$ at 470 100
             \pinlabel $\pm 1+t$  at 470 130
           \pinlabel $-1+t-s_1$ at 470 150
         \pinlabel $\vdots$ at 330 180
         \pinlabel $\vdots$ at 330 220
         \pinlabel $\vdots$ at 400 220
         \pinlabel $\vdots$ at 400 180
             \pinlabel $-1+t-s_1-\cdots s_m$ at 485 270
         \endlabellist
    \includegraphics[width=0.6\textwidth]{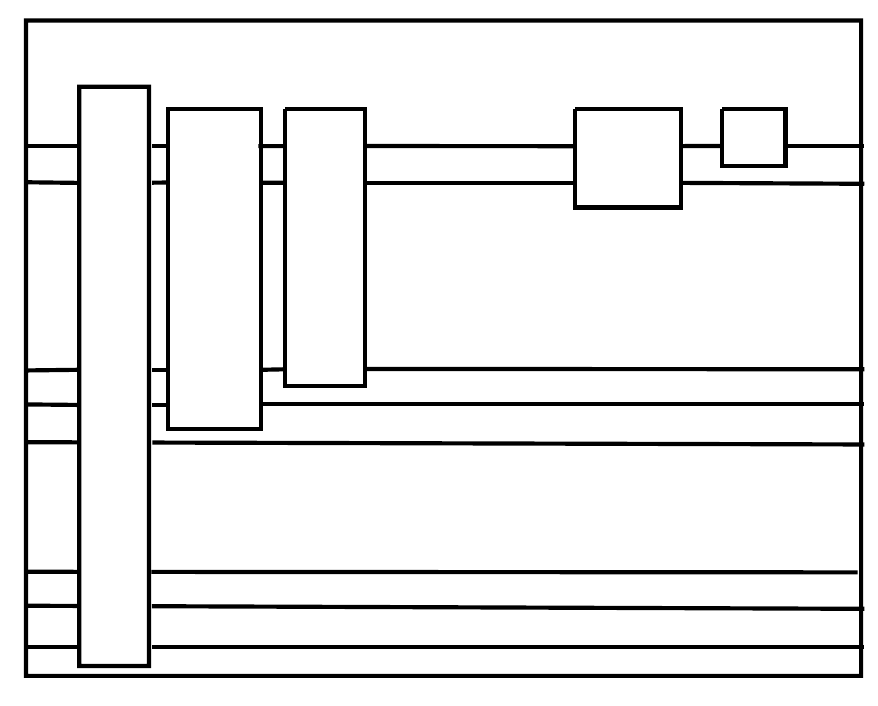}
    \caption{Case 3: A general surgery diagram again with surgery coefficients measured with respect to the Seifert longitude. The number inside the box represents the linking number between the components.}
     \label{fig:general_surgery}
\end{figure}

Here again $\mu_{l}'$ denotes the meridian of $K^l$ after the Kirby moves. 
Now we can forget about the components with index larger than $l$ and just look at the components $K^1,\ldots,K^l$. This part coincides with Case~$1$. Thus we have
\begin{align*}
   \mu_k&\longmapsto-t(1+t)^{k-1}\mu, \, \textrm{ for } k=1,2,\ldots, l-1,\\
    \mu_l'&\longmapsto(1+t)^{l-1}\mu.
\end{align*}
 Together this implies the general case of the lemma. 

 For $m=2$, the proof works the same. One just needs to keep track of the meridian as in Figure~\ref{fig:kirbymove1}. The only difference will be that $K_2$ will have topological surgery coefficient $-1+t-s_2$.  
\end{proof}

\begin{lemma}
    \label{lem:meridian_neg}
    For negative contact surgery, under the diffeomorphism from Equation~\ref{eq:pushoff} with $l=0$, the homology classes of the meridians $\mu_i$ for $m>2$ in $H_1(M)$ are mapped as follows:
    \begin{align*}
         \mu_1&\longmapsto t\mu\\
        \mu_{2}&\longmapsto (t-s_{2})(1-t)\mu\\
        \mu_{3}&\longmapsto (t-s_{2})(1-t+s_{2})(1-t)\mu\\
        &\vdots\\
        \mu_{m-1}&\longmapsto (t-s_{m-1})(1-t+s_{m-2})\cdots (1-t+s_{2})(1-t) \mu\\
        \mu_m&\longmapsto (1-t+s_{m-1})(1-t+s_{m-2})\cdots (1-t+s_{2})(1-t)\mu
        \end{align*}
        For $m=2$, the meridians are mapped as follows:
        \begin{align*}
           \mu_1&\longmapsto t\mu\\
           \mu_2&\longmapsto(1- t)\mu
       \end{align*}
        For contact $(-\frac{1}{m})$-surgery, we have the following:
       \begin{align*}
           \mu_k&\longmapsto t(1-t)^{k-1}\mu\,\textrm{ For }\,k=1,2,\cdots, m\\
           \mu_m&\longmapsto(1-t)^{m-1}\mu
       \end{align*}  
\end{lemma}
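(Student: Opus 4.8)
The plan is to argue by induction on $m$, in close parallel with the proof of Lemma~\ref{lem:meridian}: one performs explicit Kirby moves on the surgery description in Equation~\ref{eq:pushoff} and keeps track of the small meridian circles throughout, reading off at the end how each $\mu_i$ is expressed in terms of the meridian $\mu$ of the single knot $K$ obtained after the full reduction. The point that makes this a genuinely separate case (and not a specialization of Lemma~\ref{lem:meridian}, whose proof treats $l=m$, $l=1$, and $0<l<m$) is that here $l=0$, so \emph{every} component of the diagram is a contact $(-1)$-surgery; in particular the first component $K^1=K$ now carries topological surgery coefficient $t-1$ rather than $t+1$, and the stabilized push-off $K^i$ carries topological coefficient $t-s_i-1$. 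Consequently the relevant local pictures are exactly those obtained from Figures~\ref{fig:kirbymove1} and~\ref{fig:kirby2} by replacing the coefficient $1+t$ on the ``first'' strand with $t-1$, and following the slides and the slam-dunk in those modified diagrams should produce precisely the stated factors.

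First I would dispose of the base case $m=2$. Here the diagram is $K(-1)\Pushoff K_{s_2}(-1)$, with topological coefficients $t-1$ on $K^1$ and $t-s_2-1$ on its stabilized push-off $K^2$. Running through the same sequence of moves as in Figure~\ref{fig:kirby2} --- slide $K^2$ over $K^1$, isotope, slide the meridian circles $\mu_1$ and $\mu_2$ over $K^1$, then slam-dunk $K^2$ into $K^1$ --- leaves a single knot with meridian $\mu$, and following the two circles through should give $\mu_1\longmapsto t\mu$ and $\mu_2\longmapsto(1-t)\mu$. Setting $s_2=0$ in this computation simultaneously serves as the base case of the induction for contact $(-1/m)$-surgery.

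For the induction step with $m>2$ I would apply the reduction of Figure~\ref{fig:kirby2} to the last two components $K^{m-1}$ and $K^m$ (both contact $(-1)$): sliding $K^m$ over $K^{m-1}$, isotoping, sliding $\mu_{m-1}$ and $\mu_m$ over $K^{m-1}$, and slam-dunking $K^m$ into $K^{m-1}$ produces an $(m-1)$-component diagram with meridians $\mu'_1,\ldots,\mu'_{m-1}$, under which
\begin{align*}
\mu_i&\longmapsto\mu'_i\quad\text{for }i\le m-2,\\
\mu_{m-1}&\longmapsto(t-s_{m-1})\mu'_{m-1},\\
\mu_m&\longmapsto(1-t+s_{m-1})\mu'_{m-1}.
\end{align*}
By the transformation lemma, and exactly as in the proof of Lemma~\ref{lem:meridian}, this reduction turns the diagram into the one associated with a continued fraction that is one step shorter, so the resulting $(m-1)$-component diagram is again of the form of Equation~\ref{eq:pushoff} with $l=0$; hence the induction hypothesis applies to it and gives the images of $\mu'_1,\ldots,\mu'_{m-1}$ in terms of $\mu$, and composing with the displayed formulas yields the asserted images of $\mu_1,\ldots,\mu_m$. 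For contact $(-1/m)$-surgery I would run the same induction with every $s_i=0$ (equivalently, use the reduction of Figure~\ref{fig:kirbymove1} with $1+t$ replaced by $t-1$), which collapses each of the products above to a power of $1-t$ and gives $\mu_k\longmapsto t(1-t)^{k-1}\mu$ for $k<m$ together with $\mu_m\longmapsto(1-t)^{m-1}\mu$.

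The step I expect to demand the most care is the Kirby-calculus bookkeeping rather than any single clever idea. One has to arrange each handle slide and the slam-dunk so that it transforms \emph{both} the framings and the little meridian circles correctly --- it is precisely the slide of $\mu_{m-1}$ and $\mu_m$ over $K^{m-1}$ that creates the factors $t-s_{m-1}$ and $1-t+s_{m-1}$ --- and one has to verify that after the reduction the diagram genuinely lies again in the normal form of Equation~\ref{eq:pushoff} with $l=0$, so that the induction hypothesis is legitimately applicable. It is also worth noting, as one goes, that only $t$ and the $s_i$ (and, in the later application to the Euler class via Theorem~\ref{thm:Euler_algorithm}, the rotation numbers $r_i$) enter the formulas, so the outcome does not depend on the unspecified choices of stabilizations.
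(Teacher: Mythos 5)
Your proposal is correct and follows exactly the route the paper takes: the paper's own proof of this lemma is literally the one-line remark that it is similar to the proof of Lemma~\ref{lem:meridian}, and what you have written is precisely that argument carried out — the same induction via the Kirby moves of Figure~\ref{fig:kirby2}, with the only change being that the first component now has topological framing $t-1$ instead of $t+1$, which converts each factor $(1+t)$ into $(1-t)$ and the leading $-t$ into $t$.
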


\begin{proof}
    The proof is similar to the proof of Lemma~\ref{lem:meridian}. 
\end{proof}

\begin{proof}[Proof of Theorem~\ref{thm:Euler_algorithm}]
 Lemma~\ref{lem:meridian} and Lemma~\ref{lem:meridian_neg} tells us how to express the meridians $\mu_i$ in the surgery description from Equation~\ref{eq:pushoff} in terms of $\mu$. By plugging this description into the formula from Lemma~\ref{lem:d3} and simplifying the expression we get the statement of the theorem.
\end{proof}

\section{Tight surgeries on Legendrian unknots}
In this section, we will classify which rational  contact surgeries on Legendrian unknots yield tight contact manifolds and which yield overtwisted contact manifolds. Our main result reads as follows.

\begin{theorem}\label{thm:unknot_surgery_classification}
    Let $U$ be a Legendrian unknot with $\tb(U)=t\leq-1$. Then $U(r)$ is tight if and only if
    \begin{enumerate}
        \item $r<0$, or
        \item $r\geq-t$ and $U$ is only stabilized with one sign (or not stabilized at all) and the first stabilization of $U(r)$ in the transformation lemma has the same sign as the stabilizations of $U$ (or has arbitrary sign in case that $U$ is not stabilized).
    \end{enumerate}
\end{theorem}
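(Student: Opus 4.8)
The plan is to reduce everything to the classification of tight contact structures on solid tori and lens spaces, using the transformation lemma (Lemma~\ref{lem:Kirby}) to unwind a general rational surgery into $(\pm1)$-surgeries, together with the standard description of $U(r)$ as a surgery on the unknot inside $S^3$. First I would recall that $U(r)$ is, topologically, a connected sum of lens spaces (or $S^3$, or $S^1\times S^2$) and that the smooth surgery coefficient is $n=r+t$; the case $r<0$ is handled by the observation that contact $(-1/k)$-surgery is Legendrian surgery, and more generally a negative contact surgery coefficient, when expanded via the continued fraction $r=[r_1+1,r_2,\ldots,r_n]$ in the transformation lemma, is realized entirely by Legendrian (i.e. contact $(-1)$-) surgeries on stabilized push-offs of $U$. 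Since Legendrian surgery on a Legendrian link in a Stein fillable contact manifold is again Stein fillable, $U(r)$ is Stein fillable, hence tight, for every $r<0$. This disposes of case~(1) cleanly.

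For case~(2), with $r\geq -t>0$, I would use the transformation lemma in the form $U(r)\cong U(1/k)\Pushoff U\bigl(1/(\tfrac1r-k)\bigr)$ to split off one positive basic slice and reduce the remaining coefficient; iterating, a positive surgery coefficient $r$ corresponds to some number of contact $(+1)$-surgeries on push-offs followed by a (possibly empty) tail of contact $(-1)$-surgeries on stabilized push-offs, exactly as in Equation~\ref{eq:pushoff}. The key geometric input is the classical fact (Lisca--Stipsicz, Ding--Geiges, or the Honda/Giroux solid-torus picture) that a contact $(+1)$-surgery on a Legendrian unknot $U$ with $\tb(U)\leq -1$ produces an overtwisted manifold \emph{unless} the surgery coefficient is at least $-t$ and there is no "mismatch" of signs forcing a half-Lutz type twist — concretely, that the glued-in solid torus carries a tight contact structure compatible with the ambient one precisely when the continued-fraction/Farey-sequence condition from Honda's classification is met, which translates into the stated condition on the stabilization signs of $U$ and on the sign of the first stabilization in the transformation lemma. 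So the strategy is: (a) if $U$ is stabilized with both signs, exhibit an overtwisted disk directly (the relevant Legendrian push-off bounds an overtwisted disk after the positive surgery, or one appeals to the lens-space count showing the resulting contact structure is not in Honda's list); (b) if $U$ is stabilized with one sign (or not at all), check that the condition $r\geq -t$ together with the matching sign of the first transformation-lemma stabilization makes all the basic slices glue up consistently, i.e. the signs in the continued-fraction expansion are all "compatible," yielding a tight (in fact Stein fillable after an isotopy) contact structure; and conversely, if $r\geq -t$ but the first stabilization sign is wrong, or $-t>r>0$, produce overtwistedness by the same comparison.

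The main obstacle I expect is the forward direction of (2): rather than just saying "not in Honda's list," one wants an intrinsic, checkable criterion. The cleanest route is probably to compare $U(r)$ against the known classification of tight contact structures on the lens space $L(p,q)$ (for $r\geq -t$ one lands on a genuine lens space, not a reducible manifold, when $\gcd$ conditions hold, and on $\#$ of lens spaces otherwise): count the tight contact structures via Honda's formula and count how many distinct contact structures $U(r)$ can produce as $U$ ranges over its Legendrian realizations and as the tight fillings of the surgery torus vary; when the signs match, the count of candidate-tight outputs equals the number of tight structures on the lens space, forcing each such output to be tight, and when they don't match one lands outside that range. An alternative, perhaps more hands-on, is to use the convex-surface/bypass description: a contact $(+1)$-surgery on $U_t$ with $r\geq -t$ amounts to attaching a solid torus whose dividing slope, in the Farey graph, can be connected to the meridian by an edge-path staying on one side, exactly when the stabilizations are of one sign and the first transformation stabilization agrees — and a path that must cross forces a singular leaf, i.e. an overtwisted disk. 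I would aim to present whichever of these two arguments is shortest, likely the Farey/bypass one for the overtwisted direction and the Stein-fillability argument (via the transformation lemma producing an honest Stein handle picture) for the tight direction.
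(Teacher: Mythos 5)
Your treatment of case (1) matches the paper exactly: the continued-fraction expansion in the transformation lemma realizes any negative contact surgery as a sequence of Legendrian surgeries on stabilized push-offs, hence preserves Stein fillability. The problems are in case (2), where your plan has a genuine gap in the tight direction. You propose to get tightness for $r\geq -t$ with matching signs from ``the transformation lemma producing an honest Stein handle picture,'' but the decomposition of Equation~\eqref{eq:pushoff} for such an $r$ \emph{begins} with a contact $(+1)$-surgery on the stabilized unknot $U_{-t-1}$, and by Lisca--Stipsicz that intermediate stage is already overtwisted (its coefficient $+1$ is strictly less than $-t$). Tightness of the final manifold therefore cannot be read off from fillability of the pieces of the transformation-lemma decomposition; the signs ``gluing up consistently'' is not by itself an argument. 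The paper gets around exactly this obstruction with the lantern destabilization: when the first extra stabilization in the transformation lemma matches the sign of the stabilizations of $U_{-t-1}$, the block $U_{-t-1}(+1)\Pushoff U_{-t-1,1}(-1)$ is traded for $U_{-t-2}(+1)$, and iterating gives $U_{-t-1}(p/q)=U(p/q-t+1)$ with $U$ the $\tb=-1$ unknot and coefficient $>1$, for which tightness \emph{does} follow from the splitting $U(+1)\Pushoff U(1/(\tfrac1r-1))$ (tight $S^1\times S^2$ followed by negative surgery). Without this or an equivalent reduction, your tight direction is unproven. Your fallback of counting candidates against Honda's classification could in principle close the gap, but as stated it also needs more: equality of counts only forces tightness of each candidate if you separately show the candidates are pairwise distinct and that no tight structure is produced twice, which is not addressed.

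On the overtwisted side, your plan is also less concrete than the paper's. The paper isolates the range $0<r<-t$ in Lemma~\ref{lem:ot_easy} with a short self-contained argument: a Legendrian meridian of $U$ with $\tb=-1$ survives into $U(r)$ as a rational unknot whose rational Thurston--Bennequin invariant exceeds $-1$, violating the rational Bennequin inequality, hence $U(r)$ is overtwisted. The remaining overtwisted cases (both signs of stabilization, or mismatched first stabilization) are quoted from Ozbagci and from Theorem~3.3 of \cite{EKS_contact_surgery_numbers}, which is essentially the ``classical fact'' you allude to; citing those results precisely would be fine, but ``produce overtwistedness by the same comparison'' for $0<r<-t$ is not a proof, and the Farey/bypass sketch would need to be carried out in detail to substitute for the Bennequin-inequality lemma.
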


We first start with a simple lemma.

\begin{lemma}\label{lem:ot_easy}
    Let $U$ be a Legendrian unknot with $\tb(U)=t\leq-1$. Then every $U(r)$ is overtwisted if $0<r<-t$.
\end{lemma}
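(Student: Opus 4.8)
The plan is to use the transformation lemma (Lemma~\ref{lem:Kirby}(3)) to rewrite $U(r)$ as a sequence of $(\pm1)$-contact surgeries, and then locate an overtwisted disk directly in the resulting diagram. Since $\tb(U)=t\le -1$ and $0<r<-t$, the topological surgery coefficient of $U$ is $r+t$, which lies in the open interval $(t,0)$, hence is negative; so the continued fraction expansion of $r$ in the transformation lemma has the shape described there, and $U(r)$ becomes a chain $U_{a_1}(-1)\Pushoff U_{a_1,a_2}(-1)\Pushoff\cdots$ of stabilized push-offs, each with contact coefficient $-1$. First I would make the continued-fraction bookkeeping explicit enough to see that the \emph{first} push-off $U_{a_1}(-1)$ receives at least one stabilization, i.e.\ $a_1\ge 1$; this is exactly where $r<-t$ is used, since if $r\ge -t$ the first term could be unstabilized and one would instead be in case (2) of Theorem~\ref{thm:unknot_surgery_classification}.

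The key geometric step is then the observation that a contact $(-1)$-surgery on a stabilized Legendrian unknot produces an overtwisted contact structure, and moreover does so in a way that persists after further surgeries on push-offs contained in a neighborhood of that unknot. Concretely, a Legendrian unknot $U_{a_1}$ with $a_1\ge 1$ stabilizations has $\tb(U_{a_1})=t-a_1\le t-1\le -2$, so a contact $(-1)$-surgery on it is a Legendrian (Stein) surgery along a stabilized unknot; the cocore disk of the surgery handle, together with a portion of the Seifert disk of $U_{a_1}$, assembles into an overtwisted disk in the surgered manifold (this is the standard fact that a ``$+1$-framed relative to the zig-zag'' unknot bounds an overtwisted disk — equivalently, the surgered solid torus glued along such an unknot contains a convex disk with non-collared Legendrian boundary, cf.\ the local models in \cite{Ding_Geiges_Surgery}). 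I would then note that the remaining surgeries in the chain are performed on knots that, by the transformation lemma, sit inside a standard neighborhood of $U$ disjoint from this disk — or, more robustly, invoke that overtwistedness is preserved under taking connected sums and under further contact surgeries that do not touch the overtwisted disk.

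The step I expect to be the main obstacle is making the ``overtwisted disk survives the rest of the chain'' claim rigorous without re-deriving the full local model. The cleanest route is probably to argue by the cancellation/replacement philosophy: after the first $(-1)$-surgery on the stabilized unknot the manifold is already overtwisted, and the subsequent push-off surgeries take place in a tubular neighborhood of $U$ (as emphasized in the last sentence of Lemma~\ref{lem:Kirby}), which can be isotoped off the overtwisted disk. Alternatively, and perhaps more transparently, one can slam-dunk the chain back down and observe that $U(r)$ with $0<r<-t$ is a single negative surgery on a stabilized unknot, to which the known classification of tight surgeries on the unknot in $(S^3,\xist)$ (and the fact that a stabilized unknot is a \emph{loose} Legendrian whose complement contains an overtwisted disk once one does non-Stein filling surgery) applies directly. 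I would present whichever of these two framings makes the neighborhood disjointness cleanest in the paper's conventions, and relegate the explicit convex-surface model of the overtwisted disk to a remark.
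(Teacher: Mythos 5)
Your proposal has two fatal errors, and the approach cannot be repaired along the lines you sketch.

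First, the decomposition you start from is not available. The negative continued fraction expansion in the transformation lemma, which writes a surgery as a chain $K_{|2+r_1|}(-1)\Pushoff\cdots$ of contact $(-1)$-surgeries on stabilized push-offs, applies when the \emph{contact} surgery coefficient is negative. In the lemma at hand the contact coefficient satisfies $0<r<-t$, so it is positive; the sign of the topological coefficient $r+t$ is irrelevant here. Any transformation of $U(r)$ with $r>0$ into $(\pm1)$-surgeries necessarily contains at least one contact $(+1)$-surgery. Indeed, if $U(r)$ could be presented by $(-1)$-surgeries only, it would be Stein fillable and hence tight --- contradicting the very statement you are trying to prove. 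Second, your ``key geometric step'' is false: contact $(-1)$-surgery (Legendrian surgery) on a stabilized Legendrian unknot is a Stein handle attachment and \emph{preserves} fillability and tightness; it never creates an overtwisted disk. The paper itself records this at the start of the proof of Theorem~\ref{thm:unknot_surgery_classification} (``contact surgery with a negative contact surgery coefficient preserves fillability''). The overtwistedness criteria you seem to have in mind (Ozbagci, Lisca--Stipsicz, \cite{EKS_contact_surgery_numbers}) concern \emph{positive} contact surgery on knots stabilized with both signs; they would not cover the case where $U$ is stabilized with only one sign, yet the lemma must hold there as well.

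The paper's actual proof avoids all of this and all sign bookkeeping: take a Legendrian meridian $K$ of $U$ with $\tb(K)=-1$, regard it as a rationally null-homologous unknot in $U(r)$ with $r=p/q$, and compute $\tb_\Q(K)=-1-\tfrac{q}{p+qt}$. The hypothesis $0<p/q<-t$ forces $p+qt<0$, so $\tb_\Q(K)>-1$, which violates the rational Bennequin inequality of Baker--Etnyre against the bound $\chi(F)\leq 1$ for any rational Seifert surface of an unknot; hence $U(r)$ is overtwisted. If you want a surgery-diagram argument instead, you would have to first isolate the mandatory $(+1)$-surgeries and then invoke an overtwistedness criterion that works for single-sign stabilizations, which is essentially the content of Theorem~\ref{thm:unknot_surgery_classification} rather than an input to it.
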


\begin{proof}
    Let $r=p/q$ for coprime integers $p>q>0$. We denote by $K$ a Legendrian meridian of $U$ with $\tb(K)=-1$ seen as a knot in $(S^3,\xist)$. Then $K$ can also be seen as a rational Legendrian unknot in $U(r)$, and thus any rational Seifert surface $F$ of $K$ in $U(r)$ has Euler characteristic $\chi(F)\leq1$. On the other hand, we compute, for example with~\cite[Section~4.5]{phdthesis}, the rational Thurston--Bennequin invariant of $K$ in $U(r)$ to be $\tb_\Q(K)=-1-\frac{q}{p+qt}>-1$. Thus $K$ violates the rational Bennequin inequality~\cite{Baker_Etnyre_ratTB} in $U(r)$ which implies that $U(r)$ is overtwisted.
\end{proof}

\begin{proof}[Proof of Theorem~\ref{thm:unknot_surgery_classification}]
First, we recall that contact surgery with a negative contact surgery coefficient preserves fillability. In particular, $K(r)$ is tight whenever $r<0$. Thus in the following, we will restrict to positive contact surgery coefficients.

We denote by $U=U_0$ the Legendrian unknot with Thurston--Bennequin invariant $\tb=-1$. Then it follows from Lemma~\ref{lem:ot_easy} that $U(r)$ is overtwisted for $r\in[0,1)$. If $r>1$, we can use the transformation lemma to write
\begin{equation*}
    U(r)=U(+1){\def\svgwidth{1,6ex}\,\,\,\,} U\left(\frac{1}{\frac{1}{r}-1}\right).
\end{equation*}
Since $U(+1)$ yields the tight contact structure on $S^1\times S^2$ and $\frac{1}{\frac{1}{r}-1}$ is negative, it follows that every $U(r)$ is tight. Together with Lemma~\ref{lem:ot_easy} we have shown that $U(r)$ is overtwisted if and only if $r\in[0,1)$.

Now we denote by $U_{-t-1}$ a Legendrian unknot with Thurston--Bennequin invariant $\tb(U_{-t-1})=t\leq-2$. If $U_{-t-1}$ is stabilized with two different signs then every $U(r)$, $r\geq0$, is overtwisted~\cite{Ozbagci}, cf.~\cite[Theorem~3.3]{EKS_contact_surgery_numbers}. So we assume from now that $U_{-t-1}$ is stabilized with only one sign, i.e.\ $|\rot(U_{-t-1})|=t-1$. If $r\in[0,-t)$ then $U_{-t-1}(r)$ is always overtwisted by Lemma~\ref{lem:ot_easy}. Next, we consider the case $r\geq-t$. We choose coprime integers $p,q>0$, such that $r=p/q$. Then $p+tq\geq 0$ and 
\begin{equation*}
    2\geq\frac{1}{\frac{1}{r}-1}=\frac{p}{p-q}\geq1.
\end{equation*}
Thus the first terms in the negative continued fraction expansion read as
\begin{equation*}
    \frac{p}{p-q}=-3+1-\frac{1}{-\frac{p-q}{p-2q}}.
\end{equation*}
Then the transformation lemma yields
\begin{align*}
    U_{-t-1}(p/q)=U_{-t-1}(+1){\def\svgwidth{1,6ex}\,\,\,\,} U_{-t-1,1}(-1){\def\svgwidth{1,6ex}\,\,\,\,} U_{-t-1,1}\left(-\frac{p-q}{p-2q}\right).
\end{align*}
If the extra stabilization of $U_{-t-1,1}$ has a different sign than the stabilizations of $U_{-t-1}$ then the contact structure is overtwisted~\cite[Theorem~3.3]{EKS_contact_surgery_numbers} if the extra stabilization has the same sign we use the lantern destabilization~\cite{Lisca_Stipsicz_11_transverse,EKS_contact_surgery_numbers} and the transformation lemma again to write
\begin{align*}
    U_{-t-1}(p/q)=&U_{-t-1}(+1){\def\svgwidth{1,6ex}\,\,\,\,} U_{-t-1,1}(-1){\def\svgwidth{1,6ex}\,\,\,\,} U_{-t-1,1}\left(-\frac{p-q}{p-2q}\right)\\
    =&U_{-t-2}(+1){\def\svgwidth{1,6ex}\,\,\,\,} U_{-t-2,1}\left(-\frac{p-q}{p-2q}\right)\\
    =&U_{-t-2}\left(-\frac{p-q}{q}\right)\\
    =&U_{-t-2}(p/q-1).
\end{align*}
Inductively, this implies
\begin{align*}
    U_{-t-1}(p/q)=U(p/q-t+1)
\end{align*}
and since $p/q-t+1>1$ it follows that this contact structure is tight as claimed.
\end{proof}

\section{Proofs of the main results}\label{sec:proof}
\subsection{The Brieskorn sphere \texorpdfstring{$\Sigma(2,3,11)$}{E(2,3,11)}}
\begin{proof}[Proof of Theorem~\ref{thm:Brieskorn}]
Let $(\Sigma(2,3,11),\xi)$ be contactomorphic to $K(r)$ for some Legendrian knot $K$ in $(S^3,\xist)$ and some $r\in\Q\setminus\{0\}$. Then a recent result of Baldwin--Sivek~\cite{baldwin2022characterizing} says that either $K$ is a Legendrian realization of the positive twist knot $K5a1$ with topological surgery coefficient $-1$ or $K$ is a Legendrian realization of the left-handed trefoil knot $-K3a1$ with topological surgery coefficient $-1/2$. These surgery diagrams are shown in Figure~\ref{fig:topological_surgery_diagrams}.

\begin{figure}[htbp] 
\centering
\def\svgwidth{0.99\columnwidth}
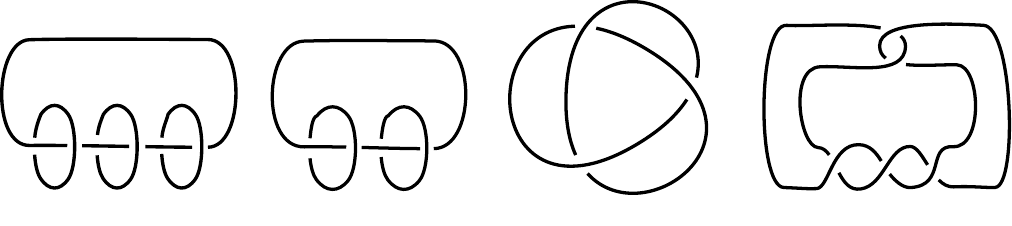
\caption{Four different surgery diagrams of the Brieskorn sphere $\Sigma(2,3,11)$. The left diagram shows the description of that manifold as Seifert fibered space.}
	\label{fig:topological_surgery_diagrams}
\end{figure}

To prove statement $(1)$, we only need to consider the first possibility (since the latter will always yield a rational contact surgery). Since $K5a1$ is Legendrian simple, every Legendrian realization $K$ of $K5a1$ is obtained by stabilizing the unique Legendrian representative of $K5a1$ with $\tb=-8$ and $\rot=1$~\cite{Etnyre_Ng_vertesi_twist}. Then we can readily apply Corollary~4.7 from~\cite{EKS_contact_surgery_numbers} to deduce statement $(1)$.

For $(2)$, we also need to consider Legendrian realizations of the left-handed trefoil. Let $K$ be a Legendrian left-handed trefoil. From the classification of Legendrian realizations of $-K3a1$, we deduce that $K$ has Thurston--Bennequin invariant $t\leq-6$~\cite{Etnyre_Honda_torus_knots}. Thus $K(\frac{-1-2t}{2})$ yields a contact structure on $\Sigma(2,3,11)$. (And together with the contact structures obtained in $(1)$ this is the complete list of contact structures on $\Sigma(2,3,11)$ with $\cs=1$.) We compute the possible $\de_3$-invariants of these contact structures. For that, we use Lemma~\ref{lem:Kirby} to change the contact surgery diagram into one with only reciprocal integer coefficients, so that Lemma~\ref{lem:d3} applies. We apply the transformation lemma to see that
\begin{align*}
    K\left(\frac{-1-2t}{2}\right)&=K(+1){\def\svgwidth{1,6ex}\,\,\,\,} K\left(-\frac{1+2t}{3+2t}\right)=K(+1){\def\svgwidth{1,6ex}\,\,\,\,} K_1\left(-\frac{1}{-t-2}\right){\def\svgwidth{1,6ex}\,\,\,\,} K_{1,1}(-1),
\end{align*}
where the last equality comes from the negative continued fraction expansion 
\begin{equation}\label{eq:cfe}
    -\frac{1+2t}{3+2t}=[\underbrace{-2,\ldots,-2}_{-t-2},-3].
\end{equation}
From this surgery description, we deduce the generalized linking matrix to be
\begin{equation*}
    Q=\begin{pmatrix}
1+t & -t(t+2) & t\\
t & 1-t-t^2 & t-1\\
t& -(t+2)(t-1)&t-3
\end{pmatrix}.
\end{equation*}
Next, we compute the signature of $Q$ to be $1$ and solve $Q\mathbf{b}=\mathbf{r}$ from which we get the $\de_3$-invariants. We write $r$ for the rotation number of $L$. In the case of $\mathbf r=(r,r\pm1,r\pm2)$, we get
\begin{equation*}
    \de_3=-(t\pm r)-\frac{(t\pm r)^2+5}{2}
\end{equation*}
and for $\mathbf r=(r,r\pm1,r)$, we compute
\begin{equation*}
    \de_3=-2(t\pm r)-\frac{(t\pm r)^2+7}{2}.
\end{equation*}
By writing $t\pm r=2m+1$ for $m\leq-3$ the claimed formulas follow.
\end{proof}

\begin{proof}[Proof of Corollary~\ref{cor:Brieskorn1}]
In the proof of Theorem~\ref{thm:Brieskorn} we created all contact surgery diagrams along a single Legendrian knot yielding contact structures on $\Sigma(2,3,11)$. The lower bound of the corollary follows by observing that these were all with contact surgery coefficients not of the form of a reciprocal integer. The upper bound follows from Proposition~6.8 in~\cite{EKS_contact_surgery_numbers}.
\end{proof}

\begin{proof}[Proof of Corollary~\ref{cor:Brieskorn_tight}]
Figure~\ref{fig:contact_Brieskorn} shows how to obtain a contact surgery diagram of $(\Sigma(2,3,11),\xist)$ along a two-component Legendrian link. This yields the upper bound of $2$ for $\cs$ and by applying the transformation lemma it also yields the upper bound $3$ for $\cs_{1/\Z}$. For the lower bound, we compute $\de_3(\xist)=2$ and observe that $2$ is never attained for the possible $\de_3$-invariants in Theorem~\ref{thm:Brieskorn}. Finally, the upper bound of $4$ for $\cs_{\pm1}$ is given by Theorem~6.9 in~\cite{EKS_contact_surgery_numbers}.
\end{proof}

\begin{figure}[htbp] 
\centering
\def\svgwidth{0.99\columnwidth}
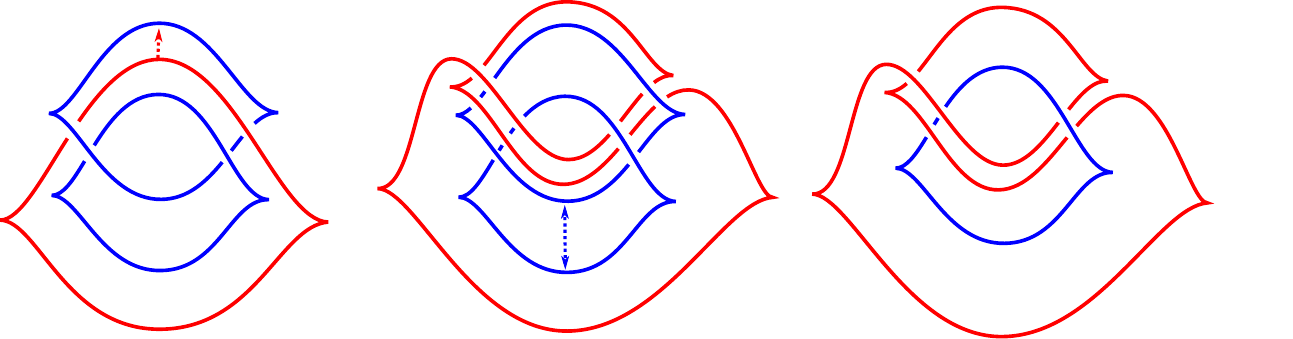
\caption{Three different contact surgery diagrams of $(\Sigma(2,3,11),\xist)$. The left diagram is obtained by Legendrian realizing the topological surgery diagram of $\Sigma(2,3,11)$ along the three component link in Figure~\ref{fig:topological_surgery_diagrams}. Since it only has negative coefficients it represents the tight contact structure~\cite{Wand}. The middle diagram is obtained by performing a handle slide~\cite{casals2021stein} of the red curve along the red dotted arc over the blue curve. The two curves in the middle diagram bound an obvious annulus. Thus we can apply the replacement lemma to get the surgery diagram on the right along a two-component link.}
	\label{fig:contact_Brieskorn}
\end{figure}

\subsection{The mirror of the Brieskorn sphere \texorpdfstring{$\Sigma(2,3,11)$}{E(2,3,11)}}
\begin{proof}[Proof of Theorem~\ref{thm:-Brieskorn}]
First, we observe that if topological $r$-surgery (i.e.\ with respect to the Seifert framing) along a smooth knot $K$ yields the smooth manifold $M$, then $(-r)$-surgery along the mirror $-K$ of $K$ yields the mirrored manifold $-M$. Thus we can deduce from~\cite{baldwin2022characterizing} that any Legendrian knot $K$ such that $K(r)$ yields a contact structure on $-\Sigma(2,3,11)$ is either isotopic to a Legendrian realization of $-K5a1$ with topological surgery coefficient $+1$ or isotopic to a Legendrian realization of the right-handed trefoil $K3a1$ with topological surgery coefficient $1/2$. Since the classification of all Legendrian realizations of $K3a1$~\cite{Etnyre_Honda_torus_knots} and $-K5a1$~\cite{Etnyre_Ng_vertesi_twist} are known, the same strategy as above works.

Statement $(1)$ follows directly from the classification of Legendrian realization of $-K5a1$ and Corollary~4.76 from~\cite{EKS_contact_surgery_numbers}.

For statement $(2)$ we consider a Legendrian realization $K$ of the right-handed trefoil $K3a1$ with Thurston--Bennequin invariant $t\leq1$. Then $K(\frac{1-2t}{2})$ yields a contact structure on $\Sigma(2,3,11)$. For $t=1$ this corresponds to a contact $(-1/2)$-surgery. Since negative surgery preserves tightness~\cite{Wand} the resulting contact structure is contactomorphic to the standard tight contact structure $\xist$, with $\de_3=-1$. If $t=0$, we perform a contact $(1/2)$-surgery and we compute directly that $\de_3=0$. For $t\leq-1$, we use Lemma~\ref{lem:Kirby} to change the contact surgery diagram to
\begin{align*}
    K\left(\frac{1-2t}{2}\right)&=K(+1){\def\svgwidth{1,6ex}\,\,\,\,} K\left(\frac{1-2t}{1+2t}\right)=K(+1){\def\svgwidth{1,6ex}\,\,\,\,} K_1\left(-\frac{1}{-t-1}\right){\def\svgwidth{1,6ex}\,\,\,\,} K_{1,1}(-1),
\end{align*}
where the last equality comes from the negative continued fraction expansion from Equation~(\ref{eq:cfe}). From that, we deduce the generalized linking matrix to be
\begin{equation*}
    Q=\begin{pmatrix}
1+t & -t^2-t & t\\
t & -t^2 & t-1\\
t& -t^2+1&t-3
\end{pmatrix}.
\end{equation*}
Next, we compute the signature of $Q$ to be $-1$ and solve $Q\mathbf{b}=\mathbf{r}$ from which we get the $\de_3$-invariants. We write $r$ for the rotation number of $L$. In the case of $\mathbf r=(r,r\pm1,r\pm2)$, we get
\begin{equation*}
    \de_3=\frac{(t\pm r)^2+1}{2}-1
\end{equation*}
and for $\mathbf r=(r,r\pm 1,r)$, we compute
\begin{equation*}
    \de_3=\frac{(t\pm r)^2+1}{2}+(t\pm r).
\end{equation*}
By writing $t\pm r=2m+1$ for $m\leq-1$ the claimed formulas follow.
\end{proof}

\begin{proof}[Proof of Corollary~\ref{cor:-Brieskorn1}]
In the proof of Theorem~\ref{thm:Brieskorn} we created all contact surgery diagrams of contact structures on $\Sigma(2,3,11)$ along a single Legendrian knot. We observe that the only diagrams with reciprocal integer contact surgery coefficients are the ones shown in Figure~\ref{fig:contact_-Brieskorn}. This directly implies the statement.
\end{proof}

\begin{proof}[Proof of Corollary~\ref{cor:-Brieskorn_tight}]
In Figure~\ref{fig:contact_-Brieskorn} we can replace the contact $(-1/2)$-surgery by two contact $(-1)$-surgeries along two Legendrian push-offs and thus $\cs_{\pm1}(\xist)\leq2$. The other statements follow directly from the proof of Theorem~\ref{thm:-Brieskorn} and Corollary~\ref{cor:-Brieskorn1} by noticing that $\de_3(\xist)=-1$. 
\end{proof}

\begin{figure}[htbp] 
\centering
\def\svgwidth{0.99\columnwidth}
\begingroup%
  \makeatletter%
  \providecommand\color[2][]{%
    \errmessage{(Inkscape) Color is used for the text in Inkscape, but the package 'color.sty' is not loaded}%
    \renewcommand\color[2][]{}%
  }%
  \providecommand\transparent[1]{%
    \errmessage{(Inkscape) Transparency is used (non-zero) for the text in Inkscape, but the package 'transparent.sty' is not loaded}%
    \renewcommand\transparent[1]{}%
  }%
  \providecommand\rotatebox[2]{#2}%
  \newcommand*\fsize{\dimexpr\f@size pt\relax}%
  \newcommand*\lineheight[1]{\fontsize{\fsize}{#1\fsize}\selectfont}%
  \ifx\svgwidth\undefined%
    \setlength{\unitlength}{562.40989545bp}%
    \ifx\svgscale\undefined%
      \relax%
    \else%
      \setlength{\unitlength}{\unitlength * \real{\svgscale}}%
    \fi%
  \else%
    \setlength{\unitlength}{\svgwidth}%
  \fi%
  \global\let\svgwidth\undefined%
  \global\let\svgscale\undefined%
  \makeatother%
  \begin{picture}(1,0.22843293)%
    \lineheight{1}%
    \setlength\tabcolsep{0pt}%
    \put(0.12920376,0.02379814){\color[rgb]{0,0,0}\makebox(0,0)[lt]{\lineheight{0.1}\smash{\begin{tabular}[t]{l}$-\frac{1}{2}$\end{tabular}}}}%
    \put(0.47395089,0.02151439){\color[rgb]{0,0,0}\makebox(0,0)[lt]{\lineheight{0.1}\smash{\begin{tabular}[t]{l}$+\frac{1}{2}$\end{tabular}}}}%
    \put(0.88532933,0.02283473){\color[rgb]{0,0,0}\makebox(0,0)[lt]{\lineheight{0.1}\smash{\begin{tabular}[t]{l}$+1$\end{tabular}}}}%
    \put(0,0){\includegraphics[width=\unitlength,page=1]{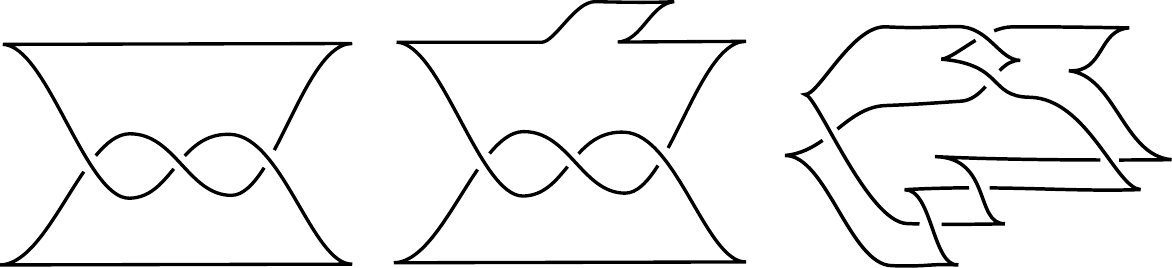}}%
  \end{picture}%
\endgroup%

\caption{The only three contact surgery diagrams along a single Legendrian knot with reciprocal integer contact surgery coefficients that yield contact structures on $-\Sigma(2,3,11)$. The left diagram yields $\xist$ while the other two diagrams represent the overtwisted contact structure with $\de_3=0$.}
	\label{fig:contact_-Brieskorn}
\end{figure}

\subsection{Surgery diagrams of the lens spaces \texorpdfstring{$L(4m+3,4)$}{L(4m+3,4)}}
Recall from the introduction that for an integer $m\geq1$ we can represent $L(4m+3,4)$ in its \textit{standard surgery diagram}: The $(-m-3/4)$-surgery along the unknot $U$. 

As preparation for the classification of the contact structures on $L(4m+3,4)$ that have contact surgery number $1$ we enumerate in this section all smooth surgery diagrams of $L(4m+3,4)$ along a single knot $K$. Moreover, we will present the meridians $\mu_K$ of these surgery knots in the standard basis of $H_1(L(4m+3,4))$ generated by the meridian $\mu$ of the unknot in the standard surgery description. The result is as follows.

\begin{theorem}\label{thm:surgery_diagrams_of_lens}
    Let $K$ be a knot in $S^3$ and $r\in\Q$ be a surgery coefficient measured with respect to the Seifert framing such that $K(r)$ is diffeomorphic to $L(4m+3,4)$. Then $(K,r)$ is either
    \begin{itemize}
        \item the torus knot $T_{(2,-(2m+1))}$ with surgery coefficient $r=-4m-3$, or
        \item an unknot $U_k$ with surgery coefficient $-\frac{4m+3}{4-4km-3k}$ for an integer $k\in\Z$, or
        \item an unknot $U^*_k$ with surgery coefficient $-\frac{4m+3}{m+1-4km-3k}$ for an integer $k\in\Z$. 
    \end{itemize}
    Moreover, the meridians $\mu_T$, $\mu_k$, and $\mu^*_k$ of the torus knot $T_{(2,-(2m+1))}$, $U_k$, and $U^*_k$ can be expressed in the standard basis as
    \begin{align*}
        \mu_T&\longmapsto 2(m+1)\mu,\\
        \mu_k&\longmapsto \mu,\\
        \mu^*_k&\longmapsto (m+1)\mu.\\
    \end{align*}
\end{theorem}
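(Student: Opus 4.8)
The plan is to combine the known smooth classification of single-knot lens-space surgeries with explicit Kirby calculus that keeps track of the meridian, using Lemma~\ref{lem:d3}(1) (and its continued-fraction reductions) to read off homology classes. \emph{Identifying $(K,r)$.} The cited work of Culler--Gordon--Luecke--Shalen~\cite{cyclic_dehn_surgery}, Rasmussen~\cite{rasmussen2007lens} and Baldwin--Sivek~\cite{baldwin2022characterizing} together classifies all single-knot surgeries producing a given lens space, and for $L(4m+3,4)$ this leaves only torus knots and unknots; it then remains to enumerate the surgery coefficients. For the unknot, $(-\tfrac{4m+3}{q})$-surgery produces $L(4m+3,q)$, and (orientation-preservingly) $L(4m+3,q)\cong L(4m+3,4)$ exactly when $q\equiv 4$ or $q\equiv 4^{-1}\pmod{4m+3}$; since $4(m+1)\equiv 1\pmod{4m+3}$ these are $q\equiv 4$ and $q\equiv m+1$, and writing these residues as $4-k(4m+3)$ and $m+1-k(4m+3)$ for $k\in\Z$ yields exactly the families $U_k$ and $U_k^{*}$ with the asserted coefficients. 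For the torus knot, Moser's surgery theorem identifies $T_{p,q}(a/b)$ as a lens space iff $|a-pqb|=1$, and then as $L(a,q^2b)$ up to orientation; with $p=2$, $q=-(2m+1)$, $r=-(4m+3)$ one gets $|a-pqb|=1$ and, using $(2m+1)^2\equiv m+1\equiv 4^{-1}\pmod{4m+3}$, the lens space $L(4m+3,4)$ — consistent with the classification, which rules out all other (in particular non-integral) torus-knot surgeries.

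\emph{Meridians of the unknot families.} Each $U_k$ is obtained from $U=U_0$ by $k$ Rolfsen twists, which change only the framing and leave the surgery knot, hence its meridian, unchanged; so $\mu_k\mapsto\mu$. Similarly each $U_k^{*}$ reduces by Rolfsen twists to $U_0^{*}$ carrying $(-\tfrac{4m+3}{m+1})$-surgery, so it is enough to treat $U_0^{*}$. Expanding $-\tfrac{4m+3}{4}=[-(m+1),-4]$ and $-\tfrac{4m+3}{m+1}=[-4,-(m+1)]$ into negative continued fractions exhibits the standard surgery on $U$ and the surgery on $U_0^{*}$ as two ways of collapsing one and the same two-node linear plumbing $G_1-G_2$ with framings $\{-(m+1),-4\}$: erasing one end by a slam-dunk returns $U$ (so $\mu=\mu_{G_1}$, the $-(m+1)$-framed node), while erasing the other end returns $U_0^{*}$ (so $\mu^{*}=\mu_{G_2}$). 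Reading the plumbing relation $-(m+1)\mu_{G_1}+\mu_{G_2}=0$ off the linking matrix gives $\mu^{*}=\mu_{G_2}=(m+1)\mu_{G_1}=(m+1)\mu$; the same plumbing exhibits $H_1\cong\Z_{4m+3}$ with self-linking $\lambda(\mu,\mu)\equiv\pm\tfrac{4}{4m+3}$.

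\emph{Meridian of the torus knot.} Since $(-(4m+3))$-surgery on $T_{(2,-(2m+1))}$ has a single integral coefficient, $H_1\cong\Z_{4m+3}$ with $\mu_T$ a generator and self-linking $\lambda(\mu_T,\mu_T)\equiv\pm\tfrac{1}{4m+3}$; writing $\mu_T=c\mu$ and comparing with $\lambda(\mu,\mu)\equiv\pm\tfrac{4}{4m+3}$ from the previous paragraph forces $4c^{2}\equiv 1\pmod{4m+3}$, i.e.\ $c\equiv\pm 2(m+1)$. To pin down the sign (and to double-check everything) I would realize $(-(4m+3))$-surgery on $T_{(2,-(2m+1))}$ by blow-downs from a simple diagram: take the unknot $U'$ with framing $-3$ together with $m$ mutually unlinked $(+1)$-framed unknots $C_1,\dots,C_m$, each of which $U'$ passes through twice; blowing down the $C_i$ inserts $-m$ full twists into the two strands of $U'$ — turning $U'=T_{(2,-1)}$ into $T_{(2,-(2m+1))}$ — and changes the framing of $U'$ to $-3-4m=-(4m+3)$, while leaving $\mu_{U'}=\mu_T$ untouched; simplifying the resulting $(m+1)$-component diagram to the standard one and tracking $\mu_T$ via Lemma~\ref{lem:d3}(1) gives $\mu_T\mapsto 2(m+1)\mu$.

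I expect the genuine work to be concentrated in this last step: choosing a convenient Kirby-calculus route from the standard diagram of $L(4m+3,4)$ to $(-(4m+3))$-surgery on $T_{(2,-(2m+1))}$, and carrying the meridian — together with orientations — through the handle slides and blow-downs without slips. The linking form already determines the relevant units $1$, $m+1$, $2(m+1)$ up to sign, and the residual sign depends only on the orientation conventions; this is immaterial for the later applications, where rotation numbers are defined only up to sign. The two unknot families, by contrast, are routine once the continued-fraction bookkeeping is set up.
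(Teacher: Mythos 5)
Your proposal reaches the same list of pairs $(K,r)$ by the same classification inputs (cyclic surgery theorem, Rasmussen, Moser, and the homeomorphism classification $L(p,q)\cong L(p,q')$ iff $q'\equiv q^{\pm1}\bmod p$, using $4^{-1}\equiv m+1$ and $(2m+1)^2\equiv m+1 \pmod{4m+3}$), and your treatment of the unknot families is essentially the paper's: Rolfsen twists fix the meridian, and the relation $\mu_{G_2}=(m+1)\mu_{G_1}$ read off the linking matrix of the $\{-(m+1),-4\}$ plumbing is exactly what the paper extracts pictorially in Figure~\ref{fig:Hopf_slam_dunk}. Where you genuinely diverge is the torus-knot meridian: the paper performs the explicit Kirby moves of Figure~\ref{fig:torus_Kirby_moves} (Rolfsen twists and a blow-down carrying $T_{(2,-(2m+1))}(-4m-3)$ to the dual unknot description) and reads off $\mu_T\mapsto 2\mu^*=2(m+1)\mu$ directly, whereas you use the linking form: comparing $\lambda(\mu_T,\mu_T)=-\tfrac1{4m+3}$ with $\lambda(\mu,\mu)=-\tfrac4{4m+3}$ forces $4c^2\equiv1$, i.e.\ $\mu_T=\pm2(m+1)\mu$. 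Your route is more robust against bookkeeping slips and makes transparent why the units $1$, $m+1$, $2(m+1)$ appear, but it only determines the class up to sign; note that $-2(m+1)\equiv 2m+1$ is a genuinely different element of $\Z_{4m+3}$, so to match the literal statement you would still need to execute the blow-down diagram you sketch (or an equivalent of Figure~\ref{fig:torus_Kirby_moves}) with consistent orientations. Since the theorem fixes no orientation of $K$, the meridian is intrinsically defined only up to sign, and the later applications use rotation numbers up to sign, so this residual ambiguity is harmless — but it is the one step of the paper's argument your proposal does not actually carry out.
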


\begin{proof}
    First, the cyclic surgery theorem~\cite{cyclic_dehn_surgery} implies that $K$ is either an unknot or $r$ is an integer. If $r$ is an integer then a result of Rasmussen~\cite{rasmussen2007lens} implies that $K$ is isotopic to the torus knot $T_{(2,-(2m+1))}$ with surgery coefficient $r=-4m-3$.

    If $K$ is an unknot then the classification of lens spaces gives us all surgery diagrams along unknots yielding $L(4m+3,4)$. For that, we consider the surgery plumbing graph corresponding to the continued fraction expansion of this lens space. I.e.\ we consider the positive Hopf link with surgery coefficients $-4$ on the first component $K_1$ and surgery coefficient $-m-1$ on the second component $K_2$ shown in Figure~\ref{fig:Hopf}. 

\begin{figure}[htbp] 
\centering
\def\svgwidth{0.5\columnwidth}
 \labellist
 	\small\hair 2pt
\pinlabel $K_1$ at 370 250
 	\pinlabel $K_2$ at 160 250
 	\pinlabel $-m-1$ at  35 150
 	\pinlabel $-4$ at 370 150
 	\endlabellist
  \includegraphics[scale=0.3]{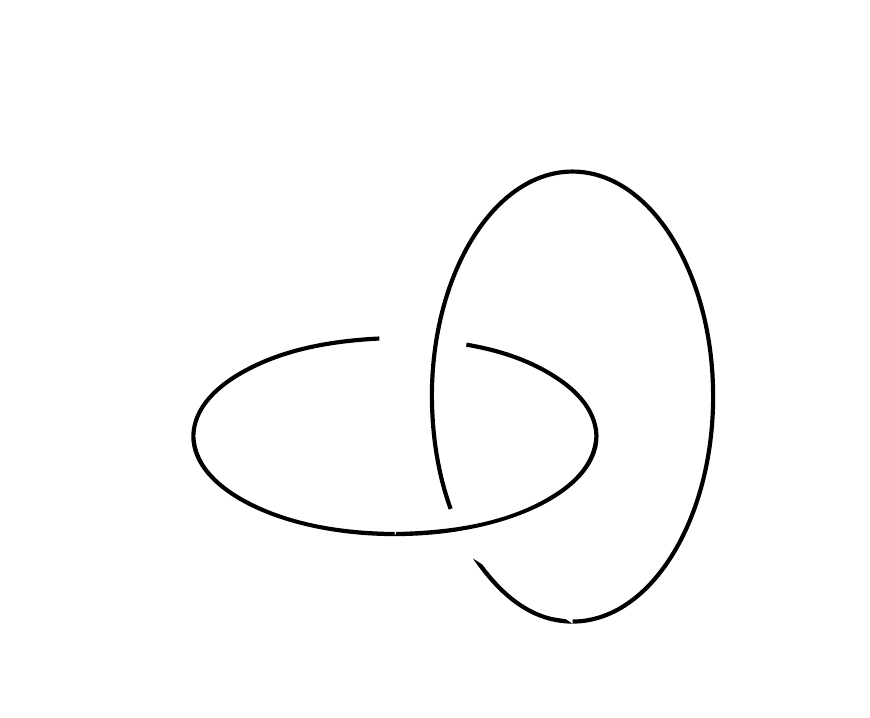}	
\caption{A surgery diagram on a positive Hopf link yielding $L(4m+3,4)$.}
	\label{fig:Hopf}    
\end{figure}

By slam dunking $K_1$ away, we get the standard surgery diagram of $L(4m+3,4)$. In Figure~\ref{fig:Hopf_slam_dunk} we keep track of the meridians $\mu_1$ and $\mu_2$ of $K_1$ and $K_2$ under this slam dunk, which shows that under the slam dunk diffeomorphism, the meridians get mapped as follows
\begin{align*}
    \mu_1&\longmapsto(m+1)\mu,\\
    \mu_2&\longmapsto\mu.
\end{align*}
\begin{figure}[htbp] 
\centering
\def\svgwidth{0.5\columnwidth}
        \labellist
 	\small\hair 2pt
        \pinlabel $K_1$ at 150 750
 	\pinlabel \textcolor{green}{${-m-1}$} at 60 650
 	\pinlabel \textcolor{blue}{$\mu_2$} at  75 550
 	\pinlabel \textcolor{red}{$\mu_1$} at 400 650
        \pinlabel \textcolor{green}{$K_2$} at 220 600
        \pinlabel $-4$ at 370 750
        \pinlabel $(1)$ at 250 440
        \pinlabel $K_1$ at 630 750
        \pinlabel $-4$ at 840 750
        \pinlabel \textcolor{red}{$\mu_1$} at 900 650
        \pinlabel \textcolor{blue}{$\mu_2$} at 610 520
        \pinlabel $(2)$ at 740 410
        \pinlabel $K_1$ at 300 400
        \pinlabel $-4$ at 420 360
        \pinlabel \textcolor{green}{$-m-1$} at 460 300
        \pinlabel \textcolor{blue}{$\mu_2$} at 140 210
        \pinlabel \textcolor{red}{$\mu_1$} at 450 120
        \pinlabel $-m-1$ at 290 220
        \pinlabel $(3)$ at 270 50
        \pinlabel \textcolor{green}{$K_2$} at 110 290
        \pinlabel $(4)$ at 760 40
        \pinlabel \textcolor{blue}{$\mu_2$} at 610 210
        \pinlabel \textcolor{green}{$-m-\frac{3}{4}$} at 930 290
        \pinlabel \textcolor{green}{$K_2$} at 600 290
        \pinlabel $-m-1$ at 780 220
        \pinlabel \textcolor{red}{$\mu_1$} at 910 95
        \tiny\pinlabel -$m$-1 at 661 572
 	\endlabellist
 	\includegraphics[scale=0.31]{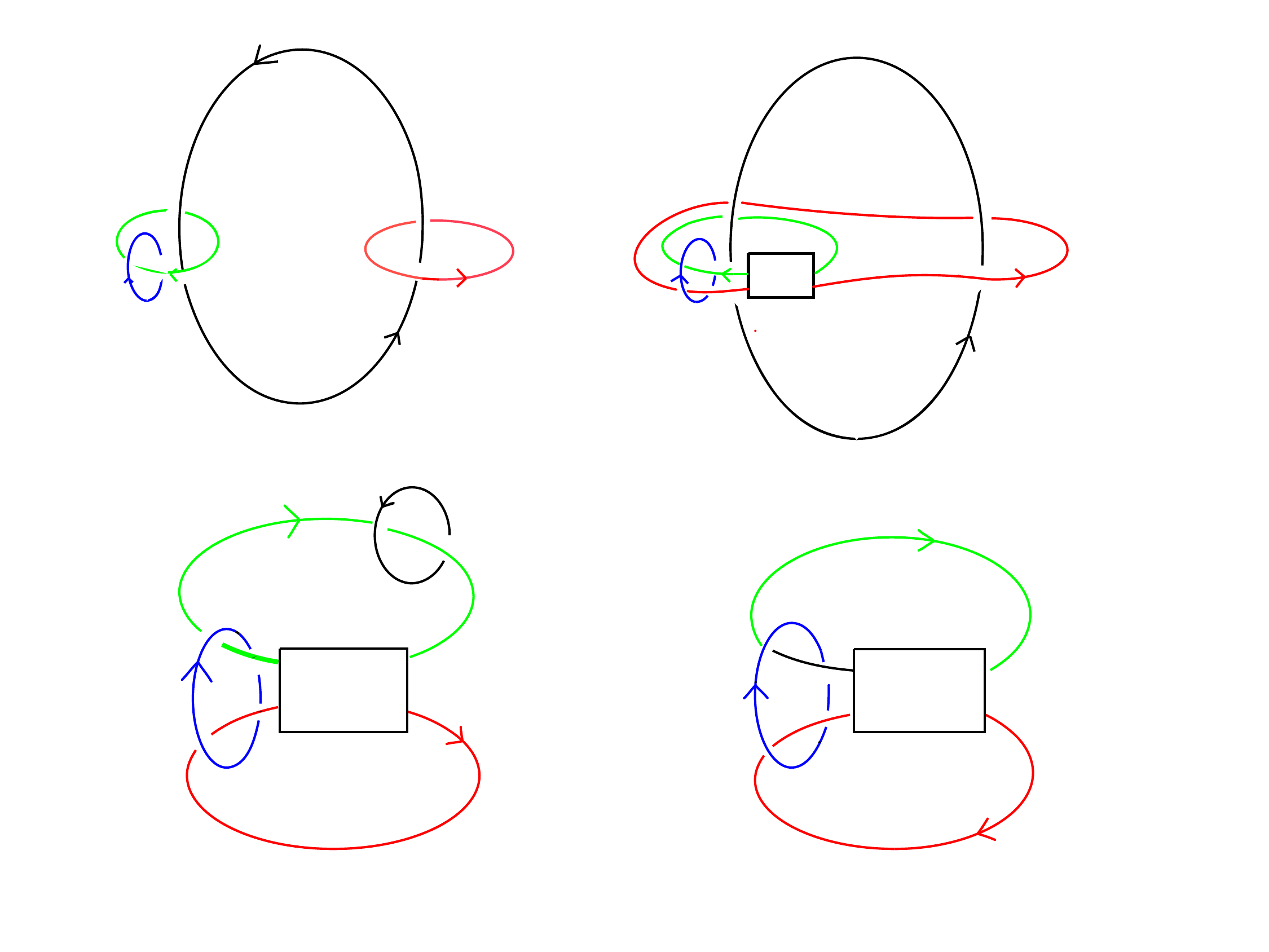}
\caption{A sequence of Kirby moves from the surgery diagram of $L(4m+3,4)$ along the positive Hopf link to its standard surgery diagram. (1) The surgery diagram of $L(4m+3,4)$ along positive Hopf link. (2) Sliding $\mu_1$ along $K_2$. (3) Isotopy. (4) Slam-dunk $K_1$ to get the final diagram.}
	\label{fig:Hopf_slam_dunk}
\end{figure}

For any integer $k\in\Z$ we can perform a Rolfsen twist on the standard surgery description to get again an unknot but with the surgery coefficient changed to $-\frac{4m+3}{4-4km-3k}$. The Rolfsen twist does not change the homology class of the meridian and thus
\begin{align*}
    \mu_k\longmapsto \mu.
\end{align*}
Finally, there are the \textit{dual} surgery descriptions (which we get by interchanging the roles of the two Heegaard tori in the above surgery descriptions). In the surgery picture we obtain these by slam dunking $K_2$ away in Figure~\ref{fig:Hopf}. This yields an unknot $U^*$ with surgery coefficient $-\frac{4m+3}{m+1}$. This dual slam dunk diffeomorphism sends $\mu_1$ to $\mu^*$ and thus
\begin{align*}
    \mu^*\longmapsto (m+1)\mu.
\end{align*}
By performing a $k$-fold Rolfsen twist on $U^*$ we get the surgery descriptions on $U^*$ with surgery coefficient $-\frac{4m+3}{m+1-4km-3k}$. The Rolfsen twist does not change the homology class of the meridian of $U^*$ and thus
\begin{align*}
    \mu^*_k\longmapsto (m+1)\mu.
\end{align*}
The classification of lens spaces implies that this is the complete list of $1$-component surgery descriptions of $L(4m+3,4)$.

It remains to express $\mu_T$ as a multiple of $\mu$. For that, we refer to Figure~\ref{fig:torus_Kirby_moves} in which a sequence of Kirby moves from the surgery diagram along the torus knot $T_{(2,-(2m+1)}$ to the dual surgery description is presented. In that figure, we observe that $\mu_T$ gets mapped to $2\mu^*$ and thus we obtain
\begin{align*}
    \mu_T\longmapsto2(m+1)\mu
\end{align*}
    as claimed.
\end{proof}
\begin{figure}[htbp] 
\centering
\def\svgwidth{0.99\columnwidth}
\labellist
 	\small\hair 2pt
 	\pinlabel \textcolor{red}{$-4m-3$} at 60 650
        \pinlabel \textcolor{red}{$\vdots$} at 130 570
        \pinlabel \textcolor{red}{$\vdots$} at 130 530
 	\pinlabel $-2m-1$ at  5 530
 	\pinlabel \textcolor{blue}{$\mu_T$} at 337 520
        \pinlabel \textcolor{red}{$T$} at 300 650
        \pinlabel $(1)$ at 170 360
        \pinlabel \textcolor{blue}{$\mu_T$} at 670 520
        \pinlabel $\frac{1}{m+1}$ at 530 470
        \pinlabel \textcolor{red}{$T$} at 640 650
        \pinlabel \textcolor{red}{$1$} at 430 650
        \pinlabel $(2)$ at 530 360
        \pinlabel $(3)$ at 900 360
        \pinlabel \textcolor{blue}{$\mu_T$} at 900 560
        \pinlabel $\frac{1}{m+1}$ at 1000 650
        \pinlabel \textcolor{red}{$T$} at 750 470
        \pinlabel \textcolor{red}{$1$} at 759 570
        \pinlabel $(4)$ at 400 10
        \pinlabel $\frac{1}{m+1}-4$ at 220 300
        \pinlabel \textcolor{blue}{$\mu_T$} at 370 200
        \pinlabel $(5)$ at 800 10
        \pinlabel $-\frac{4m+3}{m+1}$ at 800 300
        \pinlabel \textcolor{blue}{$\mu_T$} at 920 120
 	\endlabellist
\includegraphics[scale=0.3]{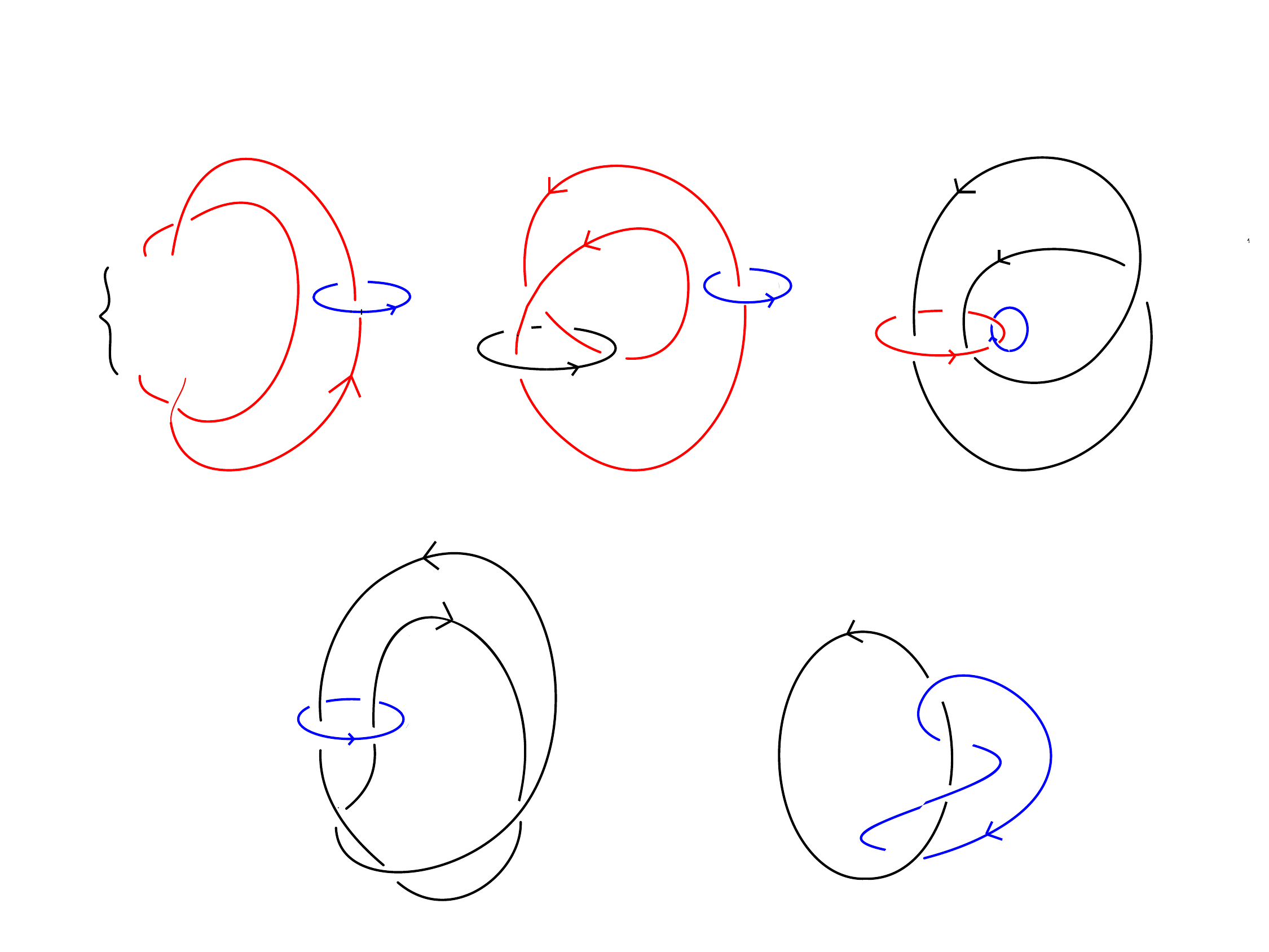}
\caption{A sequence of Kirby moves from from the surgery diagram of $L(4m+3,4)$ along the torus knot $T_{(2,-(2m+1))}$ to its dual surgery diagram along the unknot.(1) Surgery representation of $L(4m+3,4)$ along $T_{(2, -(2m+1))}$. (2) Result of $(m+1)$ Rolfsen twist. (3) Isotopy. (4) Blow down $T$. (5) Isotopy.}
	\label{fig:torus_Kirby_moves}
\end{figure}

\subsection{Integral contact surgery numbers of the lens spaces \texorpdfstring{$L(4m+3,4)$}{L(4m+3,4)}}

\begin{proof}[Proof of Theorem~\ref{thm:lens_tight}]
By Honda's classification of tight contact structures on lens spaces~\cite{Honda_lens} any tight contact structure can be obtained by Legendrian surgery on a chain of Legendrian unknots. For a lens space of the form $L(4m+3,4)$ the complete list of tight contact structures is given by the surgery descriptions shown in Figure~\ref{fig:tight_lens}. In particular, it follows that any tight contact structure on such a lens space has contact surgery number $\cs_{\pm1}\leq2$. 

\begin{figure}[htbp] 
\centering
\def\svgwidth{0.4\columnwidth}
\labellist\small\hair 2pt
\pinlabel $m-1$ at 230 80
\pinlabel $2$  at 465 80
\pinlabel $-1$ at 230 360
\pinlabel $-1$ at 430 360
\pinlabel $K_1$ at 650 150
\pinlabel $K_2$ at 50 150
\endlabellist
\includegraphics[scale=0.3]{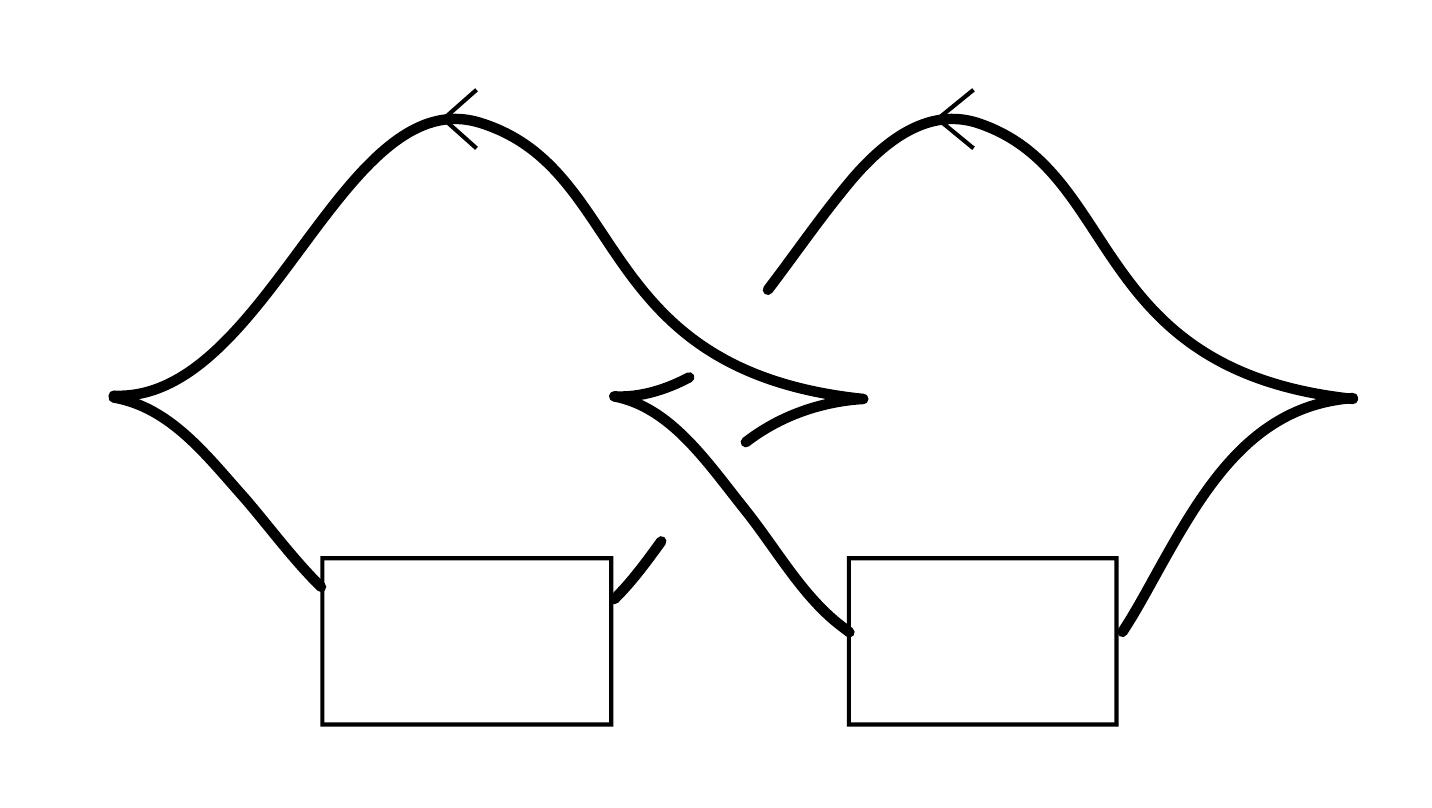}
\caption{Surgery diagrams of all tight contact structures on $L(4m+3,4)$. The integer-labeled boxes denote that many stabilizations.}
	\label{fig:tight_lens}
\end{figure}

Now let $\xi$ be a tight contact structure on $L(4m+3,4)$ that is contactomorphic to $K(n)$ for some Legendrian knot $K$ in $(S^3,\xist)$ and some $n\in\Z\setminus\{0\}$. Then Theorem~\ref{thm:surgery_diagrams_of_lens} implies that $K$ is a Legendrian realization of the torus knot $T_{(2,-(2m+1))}$ with topological surgery coefficient $-4m-3$. By~\cite{Etnyre_Honda_torus_knots}, torus knots are Legendrian simple and the torus knots at hand have $\tb\leq -4m-2$. There are exactly $m$ different Legendrian realizations of $T_{(2,-(2m+1))}$ with Thurston--Bennequin invariant $t=-4m-2$. These have rotation numbers $r\in\{1,3,5,\ldots,2m-1\}$ and are shown in Figure~\ref{fig:tight_lens_one_knot}. 
Legendrian surgery on any of these Legendrian realizations yields a tight contact structure on $L(4m+3,4)$. From Lemma~\ref{lem:d3} we see that the Poincar\'e dual of the Euler class is given by $\rot(K)\mu_T$. Using Theorem~\ref{thm:surgery_diagrams_of_lens} we can express the Euler class in the standard basis to see that the tight contact structures claimed in Theorem~\ref{thm:lens_tight} all have contact surgery numbers $\cs_{\pm1}=1$. 

To see that the other tight contact structures have $\cs_{\pm1}\geq2$, we observe that any other integer surgery on a Legendrian realization of $T_{(2,-(2m+1))}$ that yields a contact structure on $L(4m+3,4)$ is along a stabilized knot with a positive surgery coefficient and thus is overtwisted by~\cite{Lisca_Stipsicz_11_transverse}.

The same argument yields the result for the integer contact surgery numbers. 
\end{proof}

\begin{proof}[Proof of Corollary~\ref{thm:cor_lens}]
In the proof of Theorem~\ref{thm:lens_tight} we have constructed the complete list of integer contact surgery diagrams along a single Legendrian knot of tight contact structures on $L(4m+3,4)$. These are shown in Figure~\ref{fig:tight_lens_one_knot}. This immediately implies the corollary.
\end{proof}

\begin{figure}[htbp] 
\centering
\def\svgwidth{0.99\columnwidth}
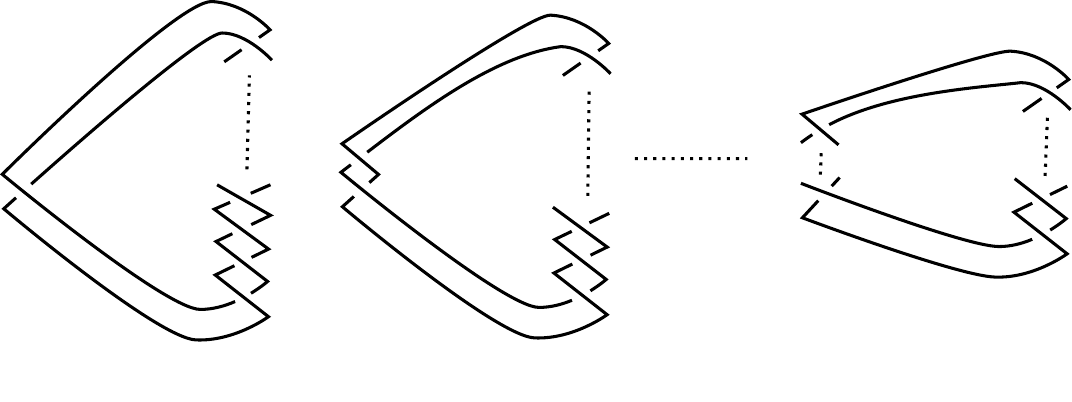
\caption{All Legendrian realizations of $T_{2,-(2m+1)}$ with maximal Thurston--Bennequin invariant (equal to $t=-4m-2$) and rotation numbers $r=1,3,\ldots,2m-1$. (The numbers indicate the number of crossings.) Performing contact $(-1)$-surgery along these knots gives the complete list of all integer contact surgery diagrams along a single Legendrian knot yielding a tight contact structure on the lens space $L(4m+3,4)$.}
	\label{fig:tight_lens_one_knot}
\end{figure}

Next, we study the overtwisted contact structures on $L(4m+3,4)$. 

\begin{proof}[Proof of Theorem~\ref{thm:lens_overtwisted}]
Let $\xi$ be a contact structure on $L(4m+3,4)$ that is contactomorphic to $K(n)$ for some Legendrian knot $K$ in $(S^3,\xist)$ and some $n\in\Z\setminus\{0\}$. The same argument as in the proof of Theorem~\ref{thm:lens_tight} shows that $K$ is a Legendrian realization of $T_{(2,-(2m+1))}$ with Thurston--Bennequin invariant $t\leq-4m-2$ and the topological surgery coefficient is $-4m-3$.

If $t=-4m-2$ the contact structure $\xi$ is tight and was already handled in the proof of Theorem~\ref{thm:lens_tight}. We do not need to consider the case when $t=-4m-3$ since then the contact surgery coefficient would vanish.

We recall that any other integer surgery on a Legendrian realization of $T_{(2,-(2m+1))}$ that yields a contact structure on $L(4m+3,4)$ is along a stabilized knot with a positive surgery coefficient and thus is overtwisted by~\cite{Lisca_Stipsicz_11_transverse}. 

If $t=-4m-4$, the contact surgery coefficient is $+1$. Thus the generalized linking matrix is $Q=(-4m-3)$ and we can readily apply Lemma~\ref{lem:d3} to compute that $\e(\xi)=r\mu_T=2r(m+1)\mu$ and
\begin{equation*}
    \de_3(\xi)=\frac{1}{4}\left( 5-\frac{r^2}{4m+3}\right),
\end{equation*}
where $r$ denotes the rotation number of $K$. From~\cite{Etnyre_Honda_torus_knots} we see that the possible range of rotation numbers of Legendrian realizations of $K$ with $t=-4m-4$ is given by $r=2k+1$, for $k\in\{-1,0,1,2,\ldots,m\}$. Plugging this in yields the formulas claimed in $(1)$.

Next, we assume that $t\leq-4m-5$. Then the contact surgery coefficient is $n=-4m-3-t\geq 2$. Thus we deduce from the transformation lemma~\ref{lem:Kirby} that
\begin{equation}\label{surgery_desc}
    K(n)\cong K(+1){\def\svgwidth{1,6ex}\,\,\,\,} K_1\left(-\frac{1}{-4m-4-t}\right).
\end{equation}

From this surgery description, we compute the generalized linking matrix to be
\begin{equation*}
    Q=\begin{pmatrix}
1+t & -t(4m+4+t) \\
t & 4m+3-t^2-t(4m+3) \\
\end{pmatrix}.
\end{equation*}
We first compute the $\de_3$-invariants. For that we observe the signature of $Q$ to be $-2$ and solve $Q\mathbf{b}=(r,r\pm1)$ (where $r$ denotes again the rotation number of $K$) from which we get 
\begin{equation*}
    \de_3=-m-\frac{t\pm r}{2}-\left(\frac{(t\pm r)^2+2(t\pm r)+1}{4(4m+3)}\right).
\end{equation*}
From~\cite{Etnyre_Honda_torus_knots} we conclude that $t\pm r=2k+1$, for $k\leq-m-2$, which directly yields the claimed formula for the $\de_3$-invariants in $(2)$. 

It remains to compute the Euler classes. For that, we write $\mu_1$ for the meridian of $K$ and $\mu_2$ for the meridian of $K_1$ in the surgery description~\eqref{surgery_desc}. Then Lemma~\ref{lem:d3} implies that the Euler class is given by 
\begin{equation*}
    \operatorname{PD}(\e)=r\mu_1 +(r\pm1)\mu_2
\end{equation*}
Now from Lemma~\ref{lem:meridian} it follows
$\mu_1=-t\mu_T$ and $\mu_2=(1+t)\mu_T$. Putting these together we have the claimed result.
\end{proof}

\begin{figure}[htbp] 
\centering
\def\svgwidth{0.99\columnwidth}
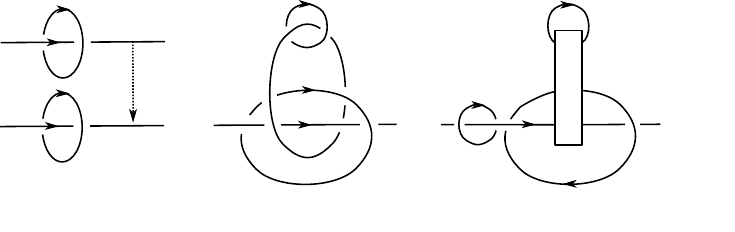
\caption{The diagram $(i)$ shows a topological realization of the surgery description~\eqref{surgery_desc} (where all surgery coefficients are measured with respect to the Seifert framing) together with the meridians $\mu_1$ and $\mu_2$. We obtain $(ii)$ from $(i)$ by the handle slide indicated with the dotted line. An $-(4m+t-4)$-fold Rolfsen twist yields the standard surgery diagram along $T_{2,-(2m+1)}$ shown in $(iii)$ where we can read-off the homology classes of $\mu_1$ and $\mu_2$ in terms of $\mu_T$. The box denotes $4m+t+4$ left-handed full-twists.}
	\label{fig:Kirby}
\end{figure}

\subsection{Rational contact surgery numbers of the lens spaces \texorpdfstring{$L(4m+3,4$)}{L(4m+3,4}}
For the rational contact surgery numbers the same strategy works to classify all contact structures on $L(4m+3,4)$ that have $\cs=1$. However, here we have to consider several infinite families of contact surgery diagrams given by Legendrian realizations of the smooth surgery diagrams given in Theorem~\ref{thm:surgery_diagrams_of_lens}. The main result is as follows.

\begin{theorem}
\label{thm:lens_rational}
    An overtwisted contact structure on $L(4m+3,4)$ has contact surgery number $\cs_{\Q}=1$ if and only if its tuple $(e,\de_3)$ of Euler class and $\de_3$-invariant appears in the lists of Theorem~\ref{thm:lens_overtwisted}~$(1)$ or~$(2)$ or in Table~\ref{tab:horrible}.
\end{theorem}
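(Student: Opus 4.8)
The plan is to run the same enumeration used in the proofs of Theorems~\ref{thm:lens_tight} and~\ref{thm:lens_overtwisted}, now permitting arbitrary rational contact surgery coefficients. By Theorem~\ref{thm:surgery_diagrams_of_lens}, every one-component smooth surgery description of $L(4m+3,4)$ is a surgery on the torus knot $T_{(2,-(2m+1))}$ with integer coefficient $-4m-3$, on a Rolfsen-twisted unknot $U_k$ with coefficient $-\frac{4m+3}{4-4km-3k}$, or on a dual unknot $U^*_k$ with coefficient $-\frac{4m+3}{m+1-4km-3k}$. Since $4m+3$ is odd it is coprime to $4$, and since $4m+3=4(m+1)-1$ it is coprime to $m+1$; hence the latter two coefficients are never integers. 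On the other hand, if $K$ is a Legendrian realization of $T_{(2,-(2m+1))}$, the contact surgery coefficient producing $L(4m+3,4)$ must be $-4m-3-\tb(K)\in\Z$: it is a $(-1)$-surgery when $\tb(K)=-4m-2$, giving a tight contact structure already treated in Theorem~\ref{thm:lens_tight}, and a positive integer surgery along a stabilized knot otherwise, which is overtwisted~\cite{Lisca_Stipsicz_11_transverse} with tuple $(\e,\de_3)$ recorded in Theorem~\ref{thm:lens_overtwisted}. Thus no new tuple comes from the torus knot, and all genuinely rational contact surgeries occur along Legendrian unknots.

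For each $k\in\Z$ and each of the two dual families, with topological surgery coefficient $p/q$, the Legendrian unknots realizing this smooth type are classified by Eliashberg--Fraser~\cite{Eliashberg_Fraser}: such a knot $U$ has $\tb(U)=t\le-1$ and, once an orientation is fixed, $\rot(U)\in\{t+1,t+3,\ldots,-t-1\}$, and the contact surgery coefficient it must carry is $r=p/q-t\neq0$ (nonzero because $p/q\notin\Z$). Theorem~\ref{thm:unknot_surgery_classification} then sorts these: $U(r)$ is overtwisted precisely when $0<r<-t$, or when $r\ge-t$ and $U$ is stabilized with both signs, or when $r\ge-t$, $U$ is stabilized with a single sign, and the first stabilization appearing in the transformation lemma has the opposite sign. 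For each overtwisted case I would compute the homotopical invariants by rewriting $U(r)$ via the transformation lemma (Lemma~\ref{lem:Kirby}) as a link carrying only $(\pm1)$-coefficients, reading off the generalized linking matrix, and applying Lemma~\ref{lem:d3} to get $\de_3$; and by applying Theorem~\ref{thm:Euler_algorithm} together with the meridian identifications $\mu_k\mapsto\mu$ and $\mu^*_k\mapsto(m+1)\mu$ from Theorem~\ref{thm:surgery_diagrams_of_lens} to get $\e(\xi)$ in $H_1(L(4m+3,4))=\Z_{4m+3}$. Simplifying the resulting expressions in $m$, $k$, $t$, and the rotation numbers produces a list of tuples $(e,\de_3)$; the subfamily with integer coefficients reproduces the lists of Theorem~\ref{thm:lens_overtwisted}, and what remains is exactly Table~\ref{tab:horrible}. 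Because $H_1(L(4m+3,4))$ has no $2$-torsion, an overtwisted contact structure is determined by $(e,\de_3)$, and since Theorem~\ref{thm:surgery_diagrams_of_lens} and the Legendrian classifications are complete, this enumeration yields both implications of the theorem.

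The main obstacle is purely combinatorial: the enumeration nests several infinite parameters --- the Rolfsen index $k$, the Thurston--Bennequin invariant $t$, the rotation number, and the stabilization signs, which simultaneously govern the tight/overtwisted dichotomy in Theorem~\ref{thm:unknot_surgery_classification} and the negative continued fraction data entering the generalized linking matrix. One has to organize the enumeration so that no Legendrian realization is omitted or double-counted, handle the two dual families $U_k$ and $U^*_k$ and the boundary case $r=-t$ separately, and push the $\de_3$- and Euler-class computations through in enough generality that the output collapses to the closed forms displayed in Table~\ref{tab:horrible}. A final bookkeeping check confirms that the integer-coefficient subcases agree exactly with Theorem~\ref{thm:lens_overtwisted}, so that the ``or'' in the statement hides no omission, and that only overtwisted structures enter Table~\ref{tab:horrible} --- which is automatic here because Theorem~\ref{thm:unknot_surgery_classification} separates tight from overtwisted unambiguously.
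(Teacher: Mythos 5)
Your proposal follows essentially the same route as the paper: reduce via Theorem~\ref{thm:surgery_diagrams_of_lens} to the torus-knot case (already covered by Theorems~\ref{thm:lens_tight} and~\ref{thm:lens_overtwisted}) and the two families of unknot descriptions, sort tight from overtwisted using Theorem~\ref{thm:unknot_surgery_classification}, and compute $(\e,\de_3)$ for each overtwisted case via the transformation lemma, Lemma~\ref{lem:d3}, and Theorem~\ref{thm:Euler_algorithm} with the meridian identifications. This is exactly the paper's argument (which likewise only carries out representative families of the case analysis explicitly), so the proposal is correct in approach.
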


{ \tiny
    \begin{table}[htbp] 
    \caption{Overtwisted contact structures on $L(4m+3,4)$. }
	\label{tab:horrible}
	\begin{tabular}{r|l}
		\toprule
	  & $e=\pm(2n+1\pm 1)+(2-t)(1+t)(1-t)^{-t-m-3}$ \\
      (3) &  $d_3=\frac{1}{2}-\frac{m(7+x^2)+(19+x^2)}{4(4m+3)}-\frac{m^2+1+n(n+8+4m)\mp 4(1+n)}{4m+3}  $\\
     &  for $x\in\{1,3\},\ n\leq -1,\ t< -2-m$\\
        \midrule
     &  $e=(-1-m)y+(-m-1+2l)$ \\
    (4)  & $d_3=\frac{1}{2}-\frac{(3+m)(m+1-2l)^2+(m+1-2l)(2+m)y}{4m+3}+\frac{2(1+m-2l)y-(1+m)y^2}{4(4m+3)}   $\\
      &  for $y\in\{0,\pm 2,\pm 4\},\ l\in\{0,1,\ldots, m+1\}$\\
        \midrule
      &  $e=(-m+2l)$\\
   (5)   &  $d_3=\frac{1}{2}-\frac{(m-2l)^2}{4m+3}$\\
     &   for $l\in\{0,1,\dots m\}$\\
          \midrule
       &  $e=\pm(1+2n)\pm 1+(2-t)(1+t)(1-t)^{-t-3}(\pm 1 +(2-t)(1-t)^{-k-2}((-m+2l)+r_4(1-t+m)))$\\ 
   (6)   &   $d_3=\frac{1}{2}+kn(n+1)-(1+2n)-\frac{(1+2n)^2+(-m+2l)^2}{4m+3}+\frac{2(-m+2l)r_4+r_4^2(m+1)\pm2(1+2n)(4(-m+2l)+r_4)}{4(4m+3)}$\\
     &  for $k\leq-2,\ r_4\in \{0,\pm 2\},\ l\in\{0,1,2,\dots m\}, \ t\leq -2, \ n\leq -1$\\
        \midrule
      &  $e=\pm(1+2n)\pm 1+(2-t)(1+t)(1-t)^{-t-3}(\mp 1 +(2-t)(1-t)^{-k-2}((-m+2l)+r_4(1-t+m)))$\\ 
   (7)   &  $d_3=\frac{1}{2}+k(n+1)(n+2)-\frac{5-4m-6(-m+2l)+n(8-4(-m+2l)-r_4)+(1+2n)^2+(-m+2l)^2}{4m+3} -\frac{r_4^2(m+1)+2(-m+2l)r_4-2r_4}{4(4m+3)} $\\
      &  for  $k\leq -2,\ r_4\in\{0,\pm 2\},\ l\in\{0,1,\dots m\},\ t\leq -2, \ n\leq -1$\\
        \midrule
      &  $e=\pm 1-r_1-3^{-k-2}5((m-2l)+(3+m)r_3)$\\
   (8)    & $d_3=\frac{1}{2}+\frac{kr_1(r_1\mp 2)}{4}+\frac{2(m-2l)(\pm 1-r_1)-(m-2l)^2\mp 2mr_1+4m+2}{4m+3}+\frac{r_1^2(4m-1)-2r_3(r_1+m-2l)-r_3^2(1+m)\pm 2(r_1+r_3)}{4(4m+3)}$\\
       & for $k\leq -2,\ r_1\in\{0,\pm 2\},\ r_3\in\{0,\pm 2\},\ l\in\{0,1,\ldots, m\}$\\   
        \midrule
       &  $e=\pm(2+2n)+(1-t)^{-t-3}(2-t)(1+t)((-m-1+2l)+(2-t+m)r_3)$\\
    (9)   &  $d_3=\frac{1}{2}-\frac{n(n+1)(4m+7)+2n(5+4m)\mp (n+1)(4(-m-1+2l)+r_3)+(-m-1+2l)^2}{4m+3} +\frac{2(-m-1+2l)r_3+r_3^2(m+1)}{4(4m+3)}$\\
       & for $r_3\in\{0,\pm 2\},\ l\in\{0,1,\dots m+1\},\ t<-2,\ n\leq -1$\\
        \midrule
       & $e=\pm(5+2m)+(3+2m)(m+2-2l)+(m-1)r_2$\\ 
    (10)   & $d_3=\frac{1}{2}+\frac{2r_2((m+2-2l)\mp 1)+r_2^2(1+m)}{4(4m+3)}+\frac{2+4m\pm 2r_1-(m+2-2l)^2}{4m+3}$\\
       & for $r_2\in\{0,\pm 2\},\ l\in\{0,1,\ldots, m+2\}$\\
        \midrule
      &  $e=0$\\
    (11)  &  $d_3=\frac{1}{2}-\frac{-2(2m+1)+(-m+2l)^2}{4m+3}-\frac{r_3^2(m+1)+2(-m+2l)r_3\pm2(r_3+4(-m+2l))}{4(4m+3)}$\\
       &  for $r_3\in\{0,\pm 2\},\,\ l\in\{0,1,\dots m\}$\\
          \midrule
       &  $e=0$\\
     (12)  &  $d_3=\frac{1}{2}+1-\frac{(m+1-2l)^2}{4m+3}-\frac{r_2^2(1+m)+2r_2(m+1-2l)}{4(4m+3)}$\\
       & for $r_2\in\{0,\pm 2\},\ \ l\in\{0,1,\ldots, m+1\}$\\
        \midrule
      &  $e=0$\\
    (13)  &  $d_3=\frac{1}{2}-\frac{m^2}{4m+3}+\frac{(1+m)(9-r^2)} {4(4m+3)}$\\
       & for $r \in\{1,3\}$\\
        \midrule
      &  $e=\pm(2n+1)\pm 1+(1-t)^{-t-2}(2-t)(1+t)(-k-1+2l_1-(t-k)(-m+1+2l_2-(t-m)r_4)))$\\
   (14)   &  $d_3=\frac{1}{2}-(1+n)+k(1+n)^2-\frac{4(n+1)^2\pm(n+1)(-k+1+2l_1+4(-m+1+2l_2)+r_4)}{4m+3} -\frac{(1+m){r_4}^2+2(-m+1+2l_2)r_4+4(-m+1+2l_2)^2}{4(4m+3)}   $\\
      &  for $r_4 \in\{0,\pm 2\},\ l_1\in\{0,1,\dots, k-1\},\ l_2\in\{0,1,\dots, m-1\}, t\leq -2, \ n< -1,k>0.$\\
        \midrule
      &  $e=0$\\
    (15)  &  $d_3=\frac{1}{2}+\frac{3(m+1)+l-(-m+1+2l)^2}{4m+3}+\frac{(1+m)(1-{r_2}^2)}{4(4m+3)}$\\ 
       & for $r_2\in\{\pm 1, \pm 3\}\ , \ l\in\{0,1,\dots m-1\}$\\
        \midrule
       & $e=0$\\
     (16)  & $d_3=\frac{1}{2}+\frac{-5-8m+r_2(l+1)+l+(-m+1+2l)^2}{4m+3}+\frac{(1+m)({r_2}^2-1)}{4(4m+3)}$\\
       & for $r_2\in\{\pm 1, \pm 3\}, \,l\in\{0,1,\dots m-1\}$\\
        \midrule
      &  $e=0$\\
    (17)  &  $d_3=\frac{1}{2}+1-\frac{(m-1-2l)^2}{4m+3}-\frac{r_1^2(1+m)+2r_1(m-1-2l)}{4(4m+3)}$\\
      &  for $r_1\in\{0,\pm 2,\pm 4\},\ \ l\in\{0,1,\ldots, m-1\}$\\
        \midrule
      &  $e=(m+1)((2n+2)+(1-t)^{-t-3}(1+t)(2-t)+(1-t)^{-t-k-5}(2-t)^2(1+t)(r_3+(4-t)(-m+1+2l)))$\\
    (18)  &  $d_3=\frac{1}{2}+kn(n+1)- \frac{n(m+1)(n+1)+n(6-r_3+m-1-2l)}{4m+3}  +\frac{(-m+1+2l)^2+4m(1+2n)+(m-l)+3-r_3(l-m(n+1))}{4m+3} -\frac{(m+1)({r_3}^2-1)}{4(4m+3)} $\\ 
      &  for $r_3\in\{\pm 1, \pm 3 \},\ n \leq -1,\  k\leq -2, \ t\leq -3, \ l\in\{0,1,\dots, m-1\}$\\
        \midrule
       &  $e=(m+1)(-(2n+2)-(1-t)^{-t-3}(1+t)(2-t)+(1-t)^{-t-k-5}(2-t)^2(1+t)(r_3+(4-t)(-m+1+2l)))$\\
    (19)   &  $d_3=\frac{1}{2}+kn(n+1)- \frac{n(m+1)(n+1)+n(6+r_3-m+1+2l)}{4m+3}+ \frac{(-m+1+2l)^2+4m(1+2n)+l+4+r_3(l+mn)}{4m+3}  -\frac{(m+1)({r_3}^2-1)}{4(4m+3)}$\\ 
       &  for $r_3\in\{\pm 1, \pm 3 \},\ n \leq -1,\ t\leq -3,\  k\leq -2, l\in\{0,1,\dots, m-1\}$\\
         \midrule 
       &  $e=(m+1)(\pm(2n+2)\mp(1-t)^{-t-3}(1+t)(2-t)+(1-t)^{-t-k-5}(2-t)^2(1+t)(r_3+(4-t)(-m+1+2l)))$\\ 
     (20)  &  $d_3= \frac{1}{2}+k(n+1)(n+2)- \frac{(m+1)(n^2+3n-r_3(n+1))-2(m-1-2l)+(m-1-2l)^2}{4m+3}-\frac{(-2m-r_3(l+1)+l)}{4m+3} -\frac{(m+1)({r_3}^2-1)}{4(4m+3)}$\\ 
      &   for $r_3\in\{\pm 1, \pm 3 \},\ n \leq -1,\ t\leq -3,\  k\leq -2,\  l\in\{0,1,\dots, m-1\}$\\
	\bottomrule
\end{tabular}
\end{table}
}

{ \tiny
    \begin{table}[htbp] 
    \caption{Table~\ref{tab:horrible} continued.}
	\label{tab:horrible2}
	\begin{tabular}{r|l}
		\toprule
        &  $e=(m+1)(1 -r_1-5(3)^{-k-2}r_2-3^{-k-2}30(m-1-2l))$ \\
     (21)   &  $d_3=\frac{1}{2}+\frac{kr_1(r_1-2)}{4}-\frac{2(1+m)r_1r_2+2r_1(m-1-2l)+(r_2^2-1)(1+m)+(3m+2)(r_1^2-2r_1)}{4(4m+3)} +\frac{(4m+2)-(r_2+1)l+(m-1-2l)^2+r_1}{4m+3}$\\
        &  for $r_1\in\{0,\pm 2\}, \ r_2\in\{\pm 1, \pm 3\},\ \ l\in\{0,1,\ldots,  m-1\}, \ k\leq -2$\\
         \midrule
         &  $e=(m+1)(-1 -r_1-5(3)^{-k-2}r_2-3^{-k-2}30(m-1-2l))$\\ 
       (22)  &  $d_3=\frac{1}{2}+\frac{kr_1(r_1+2)}{4}-\frac{2(1+m)r_1r_2+2r_1(m-1-2l)+(r_2^2-1)(1+m)+(3m+2)(r_1^2+2r_1)}{4(4m+3)} +\frac{3(m+1)-l-r_2(m-l)-(m-1-2l)^2+r_1}{4m+3}$ \\
        &   for $r_1\in\{0,\pm 2\}, \ r_2\in\{\pm 1, \pm 3\},\ \ l\in\{0,1,\ldots,  m-1\}, \ k\leq -2$\\
         \midrule
       &  $e=(m+1)(\pm(2n+2)+(2-t)(1+t)(1-t)^{-t-3}((5-t)(-m+1+2l)+r_2))$\\
     (23)  &  $d_3=\frac{1}{2}-(1+n+n^2)-\frac{(m+1)(1+n+n^2)}{4m+3}+\frac{7n\mp(1+n)(r_2(m+1)+(-m+1+2l))+{r_3}^2+9mn}{4m+3}-\frac{2r_2(-m+1+2l)+{r_2}^2(1+m)}{4(4m+3)} $\\ 
       &  for $t\leq -3,\  n\leq -1, \ r_2\in\{0,\pm 2,\pm 4\},\ l \in\{0,1,\dots, m-1\}$\\
        \midrule
      &   $e=(m+1)(1-r-27(-m+1+2l))$\\
    (24)  &   $d_3=\frac{1}{2}+\frac{r(m-l)+((m-1-2l)^2-l-r)}{4m+3}+\frac{(1+m)(r^2-1)}{4(4m+3)})$  \\
      &   for $r\in\{\pm 1,\pm 3,\pm 5\},\ l \in\{0,1,\dots, m-1\}$\\
        \midrule
      &   $e=(m+1)(-1-r-27(-m+1+2l))$\\
     (25) &   $d_3=\frac{1}{2}-\frac{r(l+1)+((m-1-2l)^2-m+r)}{4m+3}+\frac{(1+m)(r^2-1)}{4(4m+3)})$  \\
      &   for $r\in\{\pm 1,\pm 3,\pm 5\},\ l \in\{0,1,\dots, m-1\}$\\
        \midrule
       &  $e=0$\\
   (26)   &   $d_3=\frac{1}{2}-\frac{(-m+2l)^2}{3+4m} $\\
      &   for $l\in\{0,1,\dots ,m\}$\\  
        \midrule
       &  $e=(m+1)(\pm(2n+2)+(1+t)(2-t)(1-t)^{-t-2}((-k+1+2l_1)-(t+k)((3+t)(-m+1+2l_2)+r_2))$\\
    (27)   &  $d_3=\frac{1}{2}+k(1+n)^2\mp(1+n)(-k+1+2l_1)\mp\frac{(n+1)(r_2(m+1)+(-m+1+2l_2))}{4m+3}-\frac{(-m+1+2l_2)^2}{4m+3}$\\ 
       &  $\,\,\,\,\,\,\,\,\,\,\,\,\,\,-\frac{2r_2(-m+1+2l_2)\pm(1+m){r_2}^2}{4(4m+3)} -\frac{(n+1)(5m+mn+n+4)}{4m+3}$\\
       &  for $t< -1,\ \  r_2\in\{0,\pm 2\},\ l_1\in\{0,1,\dots, k-1\},\ l_2\in\{0,1,\dots, m-1\},\, k\geq 1, n<-1$\\
        \midrule
       &  $e=(m+1)(\pm(2n+2)-(t-2)(1+t)(1-t)^{-t-6}(-m+2l))$\\
     (28)  &  $d_3=\frac{1}{2}\pm\frac{(-m+2l)(1+n)}{4m+3} -\frac{(-m+2l)^2-(m+1)(n^2+5n+2)+mn+3m+2}{4m+3}$\\
       &  for $t\leq -6,\  n\leq -1,\,\ l\in\{0,1,\dots, m\}$\\
        \midrule
      &   $e=(m+1)(r-4(m+1-2l))$\\
    (29)  &   $d_3=\frac{1}{2}+1-\frac{r^2(1+m)-2r(m+1-2l)}{4(4m+3)}-\frac{(m+1-2l)^2}{4m+3}$\\
       &  for $r\in\{\pm 4, \pm 2, 0\},\,l\in\{0,1,\dots, m+1\}$\\
        \midrule
      &   $e=(m+1)r$\\
    (30)  &   $d_3=\frac{1}{2}+\frac{(m+1)(9-r^2)}{4(4m+3)} -\frac{m^2}{4m+3}$\\
       &  for $r\in\{1,3\}$\\
		\bottomrule
\end{tabular}
\end{table}
}

\begin{proof}
Let $K$ be a Legendrian knot such that contact $r_c$-surgery, $r_c\in\Q$, on $K$ yields an overtwisted contact structure on $L(4m+3,4)$. The case that $r_c$ is an integer was already discussed in Theorem~\ref{thm:lens_overtwisted}. Thus here we concentrate on the case that $r_c\in\Q\setminus\Z$.
From Theorem~\ref{thm:surgery_diagrams_of_lens}, we know that $K$ is a Legendrian realization of the unknot and that the topological surgery coefficient is $-\frac{4m+3}{4-4km-3k}$ or $-\frac{4m+3}{m+1-4km-3k}$ for $k\in\Z$.

Now the strategy is the same as in the proof of Theorem~\ref{thm:lens_overtwisted}. We will enumerate all possible contact surgery diagrams and compute their homotopical invariants. For that we convert the topological surgery coefficient into a contact surgery coefficient. We denote by $t$ the Thurston--Bennequin invariant of $K$. Then it follows that
\begin{align*}
    r_c=\frac{-(4m+3)(1-kt)-4t}{4-k(4m+3)} \text{ or }
    r_c=\frac{-(4m+3)(1-kt)-t(m+1)}{m+1-k(4m+3)}.
\end{align*}

Next, we consider the different possibilities for $k\in\Z$, $t\leq-1$, possible rotation numbers of $K$, and the possible stabilizations in the transformation lemma. In all of these cases we first use Theorem~\ref{thm:unknot_surgery_classification} to sort out the tight contact structures (these all have $\cs=1$ by Honda's classification~\cite{Honda_lens}) and for the remaining overtwisted contact structures we compute $\e$ and $\de_3$ with the methods from Section~\ref{sec:prelim}.

We start with the simplest case, where we choose $k=0$ and thus the contact surgery coefficient simplifies to $r_c=-m-t-\frac{3}{4}$. We need to consider three subcases here. In Table~\ref{tab:horrible}, these correspond to Families (3), (4) and (5).
    
     \noindent\textbf{Family (5):}   For $t+m=-1$ this corresponds to a contact $(1/4)$-surgery, which is overtwisted by Theorem~\ref{thm:unknot_surgery_classification}. To compute $\de_3$ we see that the generalized linking matrix is $Q=(-3-4m)$ with signature $-1$. This yields
    $$\de_3=\frac{1}{2}-\frac{r^2}{3+4m},$$ with the rotation number $r=\rot(K)=-m+2l$ for $l\in\{0,1,\cdots, m\}$. The Euler class is simply given by $r$. 

    \noindent\textbf{Family (3):} For $t+m\leq -3$ we see that $2<r_c\leq-t$ and thus it yields overtwisted manifolds by Theorem~\ref{thm:unknot_surgery_classification}. Applying the transformation lemma, we get
\begin{align*}
K\left(-\frac{4(t+m)+3}{4}\right)=& K(+1){\def\svgwidth{1,6ex}\,\,\,\,} K\left(\frac{-4(t+m)-3}{4+4(t+m)+3}\right)\\
=&K(+1){\def\svgwidth{1,6ex}\,\,\,\,} K_1\left(-\frac{1}{-t-m-2}\right){\def\svgwidth{1,6ex}\,\,\,\,} K_{1,3}(-1).
\end{align*}
 The generalized linking matrix for this case is given by
    \begin{equation*}
    Q=\begin{pmatrix}
1+t & -t^2-tm-2t & t\\
t & -t^2-mt-t+m+1 & t-1\\
t& -t^2-t-tm+m+2+1&t-5
\end{pmatrix}.
\end{equation*}
To get the $\de_3$ invariants, we compute the signature of $Q$ to be $-3$ and solve $Q\mathbf{b}=(r,r\pm 1, r\pm 1+x)$ where $x\in\{\pm 1,\pm 3\}$. 
Note that, we are only considering the classification up to contactomorphism it suffices for us to consider $x\in\{1,3\}$ only. Now a straightforward calculation yields
 $$\de_3= \frac{1}{2}-\frac{m(7+x^2)+(19+x^2)}{4(4m+3)}-\frac{m^2+1+n(n+8+4m)\pm 4(1+n)}{4m+3} $$

where $n=\frac{t\pm r-1}{2}.$ 
For the Euler class, we use Theorem~\ref{thm:Euler_algorithm} to compute
    $$\e(\xi)=(r\pm(t+1))\mu+x(2-t)(1+t)(1-t)^{-t-m-3}\mu.$$
    Writing $t\pm r=2n+1$, we get the desired number.

\noindent\textbf{Family (4):} Note that when $t+m=-2$, we have a slightly different surgery diagram as in Family (3) and thus we have to compute the $\de_3$-invariant separately. Here the resulting contact structures are again overtwisted by Theorem~\ref{thm:unknot_surgery_classification}. In this case, the generalized linking matrix is  
 \begin{equation*}
    Q=\begin{pmatrix}
1+t &  -t\\
-t & t-5
\end{pmatrix}.
\end{equation*} with signature $-2.$ We solve $Q\mathbf{b}=({r},r+y)$ where $r$ is the rotation number of the first component and  $y\in\{4,2,0\}$. Solving this gives us 
$$\de_3=\frac{1}{2}-\frac{(3+m)r^2+(2+m)yr}{4m+3}+\frac{2ry-(1+m)y^2}{4(4m+3)}.$$
As $r\in\{\pm(-1-m),\pm (-3-m),\cdots, 0\}$, we can rewrite $r=m+1-2l$ where $l\in\{0,1,\cdots, (m+1)\}.$
To calculate the Euler class, we simply plug in the corresponding values in the formula from Theorem~\ref{thm:Euler_algorithm}. 

Finally, for $t+m\geq 0$, the contact surgery coefficient $r_c$ is negative and thus yield tight contact structures, which we do not consider here.

For $k\neq 0$, the calculation is much more involved. So, we just present a single example case here.

\noindent\textbf{Family (14):} We consider the first contact surgery coefficient with $k\geq 1$. Then $r_c$ is positive and from Theorem~\ref{thm:unknot_surgery_classification} it follows that these contact structures are overtwisted if and only if $t\neq-1$. Thus we assume $t<-1$. Applying the transformation lemma~\ref{lem:Kirby} we get
\begin{align*}
    &K\left(\frac{-(4m+3)(1-kt)-4t}{4-k(4m+3)}\right)=K(+1){\def\svgwidth{1,6ex}\,\,\,\,} K\left(\frac{-((4m+3)(1-kt)+4t)}{(1-k-kt)(4m+3)+4(t+1)}\right)\\
    =&K(+1){\def\svgwidth{1,6ex}\,\,\,\,} K_1\left(\frac{-1}{-t-1}\right){\def\svgwidth{1,6ex}\,\,\,\,} K_{1,k-1}(-1) {\def\svgwidth{1,6ex}\,\,\,\,} K_{1,k-1,m-1}(-1){\def\svgwidth{1,6ex}\,\,\,\,} K_{1,k-1,m-1,2}(-1)
\end{align*}
    The generalized linking matrix is
    \begin{equation*}
       Q= \begin{pmatrix}
        
            t+1 &(-t-1)t &t&t&t\\
            t&-t^2&(t-1)&(t-1)&(t-1)\\
            t &-t^2+1&t-k-1&t-k&t-k\\
            t &-t^2+1 &t-k&t-k-m &t-k-m+1\\
            t &-t^2+1 &t-k &t-k-m+1 &t-k-m-2
        \end{pmatrix}
    \end{equation*}
     with signature $-3$. We solve 
     $$Q\mathbf{b}=(r, r\pm 1, r\pm 1+r_2, r\pm 1+r_2+r_3, r\pm 1+r_2+r_3+r_4) $$ where $r_i$ denote the rotation number of the corresponding components. Rewriting $r_2=-(k-1)+2l_1$, $r_3=-(m-1)+2l_2$, and $r_4\in\{0,2\}$ with $l_1\in\{0,1,\cdots, k-1\}$ and $l_2\in\{0,1,\cdots, m-1\}$. By writing $n=\frac{t\pm r-1}{2}< -1$ and plugging everything into the formulas from Lemma~\ref{lem:d3} and Theorem~\ref{thm:Euler_algorithm} we get the claimed values for $\de_3$ and $\e$.

     {\tiny{
\begin{table}[htbp]
\centering
 \caption{Surgery descriptions for all families from Theorem~\ref{thm:lens_rational}. Here $t$ represents the Thurston--Bennequin invariant of $K$ and $k$ denotes the number of Rolfsen twist. Note that, to get the overtwisted structures of (14) and (27) either one needs a mixed stabilization on the first component or if all the stabilizations are of the same sign the first pushoff must have a opposite single stabilization. Families (15)--(30) will give the dual representation of $L(4m+3,4).$ Note that, sometimes we have multiple families corresponding to some surgery picture. The different families correspond to different single stabilization of the first or the second component in the surgery diagram.} 
 \label{tab:surgery_representation}
	\begin{tabular}{c|c|c|c}
		\toprule
 (3)& $K(r)=K(+1){\def\svgwidth{1,6ex}\,\,\,\,} K_1(\frac{-1}{-t-m-2}){\def\svgwidth{1,6ex}\,\,\,\,} K_{1,3}(-1)$ & $t<-2-m$ &$k=0$ \\
  \midrule
  (4) & $K(r)=K(+1){\def\svgwidth{1,6ex}\,\,\,\,} K_4(-1)$ & $t=-2-m$ &$k=0$\\
  \midrule
  (5) &$K(r)=K(\frac{1}{4})$ &$t=-2-m$ &$k=0$\\
  \midrule
  (6)\\ (7) &$K(r)=K(+1){\def\svgwidth{1,6ex}\,\,\,\,} K_1(\frac{-1}{-t-2}){\def\svgwidth{1,6ex}\,\,\,\,} K_{1,1}(\frac{-1}{-k-1}){\def\svgwidth{1,6ex}\,\,\,\,} K_{1,1,m}(-1){\def\svgwidth{1,6ex}\,\,\,\,} K_{1,1,m,2}(-1)$ &$t< -2$ &$k\leq -2$\\
  \midrule
  (8) &$K(r)=K(+1){\def\svgwidth{1,6ex}\,\,\,\,} K_2(\frac{-1}{-k-1}){\def\svgwidth{1,6ex}\,\,\,\,} K_{2,m}(-1){\def\svgwidth{1,6ex}\,\,\,\,} K_{2,m,2}(-1)$ &$t=-2$ &$k\leq -2$\\
  \midrule
  (9) &$K(r)=K(+1){\def\svgwidth{1,6ex}\,\,\,\,} K_1(\frac{-1}{-t-2}){\def\svgwidth{1,6ex}\,\,\,\,} K_{1,m+1}(-1){\def\svgwidth{1,6ex}\,\,\,\,} K_{1,m+1,2}(-1)$ &$t<-2$ &$k=-1$\\
  \midrule
  (10) &$K(r)=K(+1){\def\svgwidth{1,6ex}\,\,\,\,} K_{m+2}(-1){\def\svgwidth{1,6ex}\,\,\,\,} K_{m+2,2}(-1)$ &$t=-1$ &$k<-2$\\
  \midrule
  (11) &$K(r)=K(\frac{1}{2}){\def\svgwidth{1,6ex}\,\,\,\,} K_1(\frac{-1}{-k-2}){\def\svgwidth{1,6ex}\,\,\,\,} K_{1,m}(-1){\def\svgwidth{1,6ex}\,\,\,\,} K_{1,m,2}(-1)$ &$t=-1$ &$k<-2$\\
  \midrule
  (12) &$K(r)=K(\frac{1}{2}){\def\svgwidth{1,6ex}\,\,\,\,} K_{m+1}(-1){\def\svgwidth{1,6ex}\,\,\,\,} K_{m+1,2}(-1)$ &$t=-1$ &$k=-2$\\
  \midrule
  (13) &$K(r)=K(\frac{1}{m+2}){\def\svgwidth{1,6ex}\,\,\,\,} K_3(-1)$ &$t=-1$ &$k=-1$\\
  \midrule
  (14) &$K(r)=K(+1){\def\svgwidth{1,6ex}\,\,\,\,} K_1(\frac{-1}{-t-1}){\def\svgwidth{1,6ex}\,\,\,\,} K_{1,k-1}(-1){\def\svgwidth{1,6ex}\,\,\,\,} K_{1,k-1,m-1}(-1){\def\svgwidth{1,6ex}\,\,\,\,} K_{1,k-1,m-1,2}(-1)$ &$t\leq -2$ &$k>0$\\
  \midrule
  (15)\\ (16) &$K(r)=K(\frac{1}{2}){\def\svgwidth{1,6ex}\,\,\,\,} K_1(\frac{-1}{-k-2}){\def\svgwidth{1,6ex}\,\,\,\,} K_{1,3}(-1){\def\svgwidth{1,6ex}\,\,\,\,} K_{1,3,m-1}(-1)$ &$t=-1$ &$k<-2$\\
  \midrule
  (17) &$K(r)=K(\frac{1}{2}){\def\svgwidth{1,6ex}\,\,\,\,} K_4(-1){\def\svgwidth{1,6ex}\,\,\,\,} K_{4,m-1}(-1)$ &$t=-1$ &$k=-2$\\
  \midrule
  (18)\\ (19)\\ (20) &$K(r)=K(+1){\def\svgwidth{1,6ex}\,\,\,\,} K_1(\frac{-1}{-t-2}){\def\svgwidth{1,6ex}\,\,\,\,} K_{1,1}(\frac{-1}{-k-1}{\def\svgwidth{1,6ex}\,\,\,\,} K_{1,1,3}(-1){\def\svgwidth{1,6ex}\,\,\,\,} K_{1,1,3,m-1}(-1)$ &$t<-2$ &$k\leq -2$\\
  \midrule
  (21)\\ (22) &$K(r)=K(+1){\def\svgwidth{1,6ex}\,\,\,\,} K_2(\frac{-1}{-k-1}){\def\svgwidth{1,6ex}\,\,\,\,} K_{2,3}(-1){\def\svgwidth{1,6ex}\,\,\,\,} K_{2,3,m-1}(-1)$ &$t=-2$ &$k\leq -2$\\
  \midrule
  (23) &$K(r)=K(+1){\def\svgwidth{1,6ex}\,\,\,\,} K_1(\frac{-1}{-t-2}){\def\svgwidth{1,6ex}\,\,\,\,} K_{1,4}(-1){\def\svgwidth{1,6ex}\,\,\,\,} K_{1,4,m-1}(-1)$ &$t<-2$ &$k=-1$\\
  \midrule
  (24)\\ (25) &$K(r)=K(+1){\def\svgwidth{1,6ex}\,\,\,\,} K_5(-1){\def\svgwidth{1,6ex}\,\,\,\,} K_{5,m-1}(-1)$ &$t=-2$ &$k=-1$\\
  \midrule
  (26) &$K(r)=K(\frac{1}{5}){\def\svgwidth{1,6ex}\,\,\,\,} K_{m}(-1)$ &$t=-1$ &$k=-1$\\
  \midrule
  (27) &$K(r)=K(+1){\def\svgwidth{1,6ex}\,\,\,\,} K_1(\frac{-1}{-t-1}){\def\svgwidth{1,6ex}\,\,\,\,} K_{1,k-1}(-1){\def\svgwidth{1,6ex}\,\,\,\,}_{1,k-1,2}(-1){\def\svgwidth{1,6ex}\,\,\,\,} K_{1,k-1,2,m-1}(-1)$ &$t<-1$ &$k>0$\\
  \midrule
  (28) &$K(r)=K(+1){\def\svgwidth{1,6ex}\,\,\,\,} K_1(\frac{-1}{-t-5}){\def\svgwidth{1,6ex}\,\,\,\,} K_{1,m}(-1)$ &$t<-5$ &$k=0$\\
  \midrule
  (29) &$K(r)=K(+1){\def\svgwidth{1,6ex}\,\,\,\,} K_{m+1}(-1)$ &$t=-5$ &$k=0$\\
  \midrule
  (30) &$K(r)=K(\frac{1}{m+1})$ &$t=-4$ &$k=0$\\
  \bottomrule
\end{tabular}  
\end{table}
}}

   \noindent\textbf{Families (6)-(13):} These families correspond to the different possibilities of $r_c=\frac{-(4m+3)(1-kt)-4t}{4-k(4m+3)}$ with $k<0$ that yield overtwisted contact structures.
 
   \noindent\textbf{Families (15)-(30):} These families correspond to the different possibilities of $r_c=\frac{-(4m+3)(1-kt)-t(m+1)}{m+1-k(4m+3)}$ that yield overtwisted contact structures.

In Table~\ref{tab:surgery_representation} we have listed the surgery diagrams of the above families.
\end{proof}

From Theorem~\ref{thm:lens_rational} we can deduce Corollary~\ref{cor:lens_rational} which was stated in the introduction. More precisely, we will prove the following.
\begin{corollary}
    For every $m\geq1$ and every $N\in\Z$ there exists a unique overtwisted contact structure $\xi_N^m$ on $L(4m+3,4)$ with homotopical invariants
    \begin{align*}
\big(\e(\xi_N^m),\de_3(\xi_N^m)\big)=\begin{cases}
            \left(0,N+\frac{1}{2}\right) \text{ if } m\in2\Z+1\\
            \left(1,N-\frac{1}{4m+3}+\frac{1}{2}\right) \text{ if } m\in2\Z.
        \end{cases}
    \end{align*}
    For any odd integer $N\geq\max\{7,m+3\}$ we have $\cs(L(4m+3,3),\xi_N^m)>1$.
\end{corollary}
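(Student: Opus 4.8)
The plan is to read the statement off Theorem~\ref{thm:lens_rational}, which characterizes the overtwisted contact structures on $L(4m+3,4)$ with $\cs_{\Q}=1$ as precisely those whose pair $\big(\e,\de_3\big)$ occurs in the lists of Theorem~\ref{thm:lens_overtwisted}(1)--(2) or in Tables~\ref{tab:horrible}--\ref{tab:horrible2}. Thus $\cs(\xi_N^m)>1$ is equivalent to $\big(\e(\xi_N^m),\de_3(\xi_N^m)\big)$ being absent from every one of those families, and the proof splits into two independent tasks: (a) show that $\xi_N^m$ is well defined, and (b) show that for odd $N\ge\max\{7,m+3\}$ its invariants avoid all the families.

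For task (a): overtwisted contact structures on $L(4m+3,4)$ are classified up to isotopy by their underlying plane field~\cite{Eliashberg_OT}, and since $H_1(L(4m+3,4))=\Z_{4m+3}$ has no $2$-torsion, plane fields are in turn classified by the pair $\big(\e,\de_3\big)$~\cite{Gompf_Stein,Ding_Geiges_Stipsicz}; this gives uniqueness. For existence it is enough to exhibit, for each parity of $m$, a single plane field with the prescribed Euler class whose $\de_3$ lies in the prescribed coset of $\Z$, because modifying a plane field over the top cell adds an arbitrary integer to $\de_3$ while preserving the Euler class, and Eliashberg's theorem realizes each resulting homotopy class by an overtwisted contact structure. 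Such a seed is furnished by a filling computation: $L(4m+3,4)$ bounds the negative-definite plumbing $X$ on the positive Hopf link with framings $-4$ and $-m-1$, for which $\chi(X)=3$, $\sigma(X)=-2$, and hence $-2\chi(X)-3\sigma(X)=0$, so Gompf's formula reads $\de_3=\tfrac14 c^2+\tfrac12$ for the plane field induced by an almost-complex structure with characteristic class $c$; taking $c=0$ when $m$ is odd gives Euler class $0$ and $\de_3=\tfrac12\in\tfrac12+\Z$, and taking $c$ dual to the $(-m-1)$-framed handle when $m$ is even gives Euler class $1$ and $\de_3=-\tfrac1{4m+3}+\tfrac12$. (Equivalently, suitable entries of Table~\ref{tab:horrible}, e.g.\ in family~(11) when $m$ is odd, realize seeds in the correct cosets as honest contact structures.)

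For task (b) I would examine the families of Theorem~\ref{thm:lens_overtwisted} and Tables~\ref{tab:horrible}--\ref{tab:horrible2} one at a time. Theorem~\ref{thm:lens_overtwisted}(1) is excluded at once, since there $\e\in\{2m+1,\dots,3m+2\}$, which contains neither $0$ nor $1$ modulo $4m+3$. For each remaining family I would first impose the congruence $\e\equiv 0$ (for $m$ odd) or $\e\equiv 1$ (for $m$ even) modulo $4m+3$, record the constraints it puts on the parameters $t,k,n,l,r_i,\dots$, and then analyze the possible values of $\de_3$. In most families the Euler-class constraint forces the unbounded parameters to extreme values (typically $n=-1$), after which the listed $\de_3$ is bounded above by an explicit quantity of size $O(m)$; since $N\ge\max\{7,m+3\}$, no such value equals $N+\tfrac12$ (respectively $N-\tfrac1{4m+3}+\tfrac12$). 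The only genuinely unbounded families are (14) and (27), whose $\de_3$ contains a term $k(n+1)^2$ with $k$ an arbitrarily large positive integer; for those one must argue more carefully, using that $\de_3=N+\tfrac12$ additionally forces the fractional contributions (the terms with denominators $4m+3$ and $4(4m+3)$) to sum to an integer — an extra congruence on the parameters — and then checking that this congruence together with the Euler-class congruence pins the parity of the resulting integer $N$ to be even. This is exactly the role of the hypothesis that $N$ is odd, in addition to being large.

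The main obstacle is this last point for families (14) and (27): one must solve simultaneously the Euler-class congruence modulo $4m+3$ and the integrality congruence modulo $4(4m+3)$, track the parities of the twist parameter $k$ and of $N$, and respect the $k$-dependent ranges such as $l_1\in\{0,\dots,k-1\}$; getting this bookkeeping right — together with, on the existence side, confirming that the class dual to the $(-m-1)$-framed handle is characteristic precisely when $m$ is even and yields the advertised $\de_3$-coset — is the technical heart of the argument, while all the remaining families fall to the elementary size estimates above.
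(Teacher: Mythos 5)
Your proposal follows essentially the same route as the paper: uniqueness from the homotopy classification of overtwisted structures on a lens space without $2$-torsion, existence from a seed plane field in the correct $\de_3$-coset shifted over the top cell (the paper instead connect-sums an explicit tight seed with overtwisted structures on $S^3$, which amounts to the same thing and yields the same values $(0,\tfrac12)$ resp.\ $(1,\tfrac12-\tfrac1{4m+3})$), and exclusion by a family-by-family check combining the Euler-class congruence, boundedness of $\de_3$ in all families except (14) and (27), and a parity argument showing those two families only produce even $N$. The detailed case analysis is only sketched in your write-up, but your identification of (14) and (27) as the sole unbounded families where the oddness of $N$ must be invoked, and of the parity bookkeeping in $k$ and $n$ as the technical heart, matches the paper's proof exactly.
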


\begin{proof}[Proof of Corollary~\ref{cor:lens_rational}]
First, we prove the existence of the contact structures $\xi_N^m$. We distinguish by the parity of $m$. If $m=2a+1$, we consider the tight contact structure $\xi^m$ on $L(4m+3,4)$ with surgery diagram from Figure~\ref{fig:tight_lens} with $\rot(K_1)=\rot(K_2)=0$. From that diagram we compute that $e_m=\e(\xi^m)=0$ and $d_m=\de_3(\xi^m)=\frac{1}{2}$. 

If $m=2a$, we consider the tight contact structure $\xi^m$ on $L(4m+3,4)$ with surgery diagram from Figure~\ref{fig:tight_lens} with $\rot(K_1)=0$ and $\rot(K_2)=1$. From that diagram we compute that $e_m=\e(\xi^m)=1$ and $d_m=\de_3(\xi^m)=\frac{1}{2}-\frac{1}{4m+3}$.

Now let $\xi^m$ be a contact structure on $L(4m+3,4)$ with Euler class $e_m$ and $d_3$-invariant $d_m\in\Q$. For every $N\in\Z$ we consider the overtwisted contact manifold 
$$\big(L(4m+3,4),\xi^m_N\big):=\big(L(4m+3,4),\xi\big)\#\big(S^3,\xi_N\big).$$ 
Since the $\de_3$-invariant behaves additive under connected sum and the Euler class does not change, we see that this contact structure has again Euler class $e_m$ and $d_3$-invariant $d_m+N$. This proves the existence of the overtwisted contact structures claimed in the corollary.

Since the first homology of $L(4m+3,4)$ has no $2$-torsion, an overtwisted contact structure is uniquely determined by $\e$ and $\de_3$ and thus $\xi_N^m$ is uniquely determined by its homotopical invariants.

To show that infinitely many of these contact structures have $\cs>1$ we check which of the $\xi_N^m$ appear in the lists from Theorem~\ref{thm:lens_overtwisted} and~\ref{thm:lens_rational}. For that we will check which of those have homotopical invariants of the form $(e,d_3)=(e_m,d_m+N)$.

\noindent\textbf{(A) Obstructions from the Euler class:}

\noindent\textbf{Family (1):} In this family the Euler class is $k+2m+2$, for $k=-1,0,1,\ldots,m$. Thus the Euler class takes values between $2m+1$ and $3m+2$ and in particular is not $0$ or $1$.

\noindent\textbf{Family (5):} The Euler class is $-m+2l$, for $l=0,1,\ldots,m$. If $m$ is odd this cannot be $0$ and if $m$ is even this cannot be $1$. 

\noindent\textbf{Family (4):} In this family the Euler class is $2l-(y+1)(m+1)$ for $y\in\{0,\pm2,\pm4\}$ and $l=0,1,\ldots,m+1$. We check that the for $m=2a+1$ ($m=2a$) the Euler class is $0$ ($1$) if and only if $y=0$ and $l=a+1$. Then we plug in these values in the $\de_3$-invariant to see that if $m$ is even we do not get any contact structure of the desired form. On the other hand, for every odd $m$ we get a contact structure with $e=0$ and $\de_3=1/2$.

\noindent{\textbf{(B) Universally bounded $\de_3$-invariants:}}

\noindent\textbf{Families (3), (6), (9), (20), (23), (29):} In these cases we observe that $d_3$ is always negative.

\noindent\textbf{Families (7), (8), (10)--(13),  (15), (17), (22), (26), (28), (30):} In these cases we can bound $d_3$ from above by $7$.

\noindent\textbf{Family (2):} Here we observe that if the Euler class is $0$ or $1$, then the $\de_3$-invariant is negative.

\noindent{\textbf{(C) Bounded $\de_3$-invariants:}}

\noindent\textbf{Families (16), (18), (19),  (21), (24), (25):} In these cases we estimate $d_3$ to be smaller than $m+3$.

\noindent{\textbf{(D) Even $\de_3$-invariants:}}

\noindent\textbf{Families (14), (27):} We will show that in these two cases it follows that if $m$ is odd (even) and $\de_3\in 1/2+\Z$ ($\de_3\in 1/2 -1/(4m+3)+\Z$) then it follows that $\de_3\in 1/2+2\Z$ ($\de_3\in 1/2 -1/(4m+3)+2\Z$). In other words, the $N$ in the statement of the corollary is always an even number. 

For that, we consider all possible parities of $m$, $k$, and $n$ and check the parities of the $\de_3$-invariants. We present the details for Family (14). (Family (27) works similarly.)

In Family (14) the formula for the $\de_3$ invariant consists of $5$-summands. The first summand is $1/2$ which we can ignore. The second two summands are integers and the last two summands are fractions with odd denominator $4m+3$.

The second term is $-(1+n)$ which is odd if and only if $n$ is even. The third term is $k(1+n)^2$ and is odd if and only if $n$ is even and $k$ is odd. The numerator of the last term is a multiple of $1+m$. And the third term has even denominator if and only if $(n+1)(k+1)$ is even which is the case if and only if $n$ or $k$ is odd. By adding up these parities we see that we get in all cases even values of $N$.
\end{proof}

\begin{proof}[Proof of Corollary~\ref{cor:integerlenscor}]
This corollary follows now directly from Corollary~\ref{cor:lens_rational} and Theorem~\ref{thm:lens_overtwisted}.
\end{proof}

\let\MRhref\undefined
\bibliographystyle{hamsalpha}
\bibliography{ref.bib}

\end{document}